\definecolor{LinkColor}{rgb}{0,0,0} %black
\newcommand{\slunlhd}{%
  \mathrel{\mathpalette\sl@unlhd\relax}%
}
\newcommand{\sl@unlhd}[2]{%
  \sbox\z@{$#1\lhd$}%
  \sbox\tw@{$#1\leqslant$}%
  \dimen@=\ht\tw@
  \advance\dimen@-\ht\z@
  \ifx#1\displaystyle
    \advance\dimen@ .2pt
  \else
    \ifx#1\textstyle
      \advance\dimen@ .2pt
    \fi
  \fi
  \ooalign{\raisebox{\dimen@}{$\m@th#1\lhd$}\cr$\m@th#1\leqslant$\cr}%
}
\newtheorem{maintheorem}{Theorem}[section]
\newtheorem{maincorollary}[maintheorem]{Corollary}
\newtheorem{mainquestion}[maintheorem]{Question}
\newtheorem{theorem}{Theorem}[section]
\newtheorem{corollary}[theorem]{Corollary}
\newtheorem{lemma}[theorem]{Lemma}
\newtheorem{proposition}[theorem]{Proposition}
\theoremstyle{definition}
\newtheorem{remark}[theorem]{Remark}
\newcommand{\cut}{\textsf{cut}\xspace}
\newcommand{\SG}[1]{\text{SG}[#1]}
\newcommand{\SL}{\operatorname{SL}}
\newcommand{\GL}{\operatorname{GL}}
\newcommand{\PSL}{\operatorname{PSL}}
\newcommand{\PGL}{\operatorname{PGL}}
\newcommand{\Aut}{\operatorname{Aut}}
\newcommand{\Out}{\operatorname{Out}}
\newcommand{\Inn}{\operatorname{Inn}}
\newcommand{\FF}{\mathbb{F}}
\newcommand{\Z}{\textup{Z}}
\newcommand{\V}{\textup{V}}
\newcommand{\C}{\textup{C}}
\newcommand{\N}{\textup{N}}
\newcommand{\ZZ}{\mathbb{Z}}
\newcommand{\QQ}{\mathbb{Q}}
\newcommand{\GK}{\Gamma_{\textup{GK}}}
\renewcommand{\O}{\operatorname{O}}
\newcommand{\F}{\textup{F}}
\newcommand{\Frob}{\textup{Fr}} 
 \newcommand{\PQ}{(\textup{PQ})\xspace}
\newcommand{\GEN}[1]{\langle #1 \rangle}
\newcommand{\pmatriz}[1]{\left(\begin{array} #1 \end{array}\right)}
\newcommand{\qand}{\quad \text{and} \quad}
\DeclareMathOperator{\lcm}{lcm}
\title{Gruenberg-Kegel graphs: cut groups, rational groups and the Prime Graph Question}
\author[A.~B\"achle]{Andreas B\"achle}
\address{(Andreas Bächle)}
\email{\href{mailto:ABaechle@gmx.net}{ABaechle@gmx.net}}
\author[A. Kiefer]{Ann Kiefer}
\address{(Ann Kiefer) LUCET, Université du Luxembourg, Maison des Sciences Humaines, 11, Porte des Sciences, L-4366 Esch-sur-Alzette}
\email{\href{mailto:ann.kiefer@uni.lu}{ann.kiefer@uni.lu}}
\author[S. Maheshwary]{Sugandha Maheshwary}
\address{(Sugandha Maheshwary) Indian Institute of Science Education and Research, Mohali, Sector 81, Mohali (Punjab)-140306, India.}
\email{\href{mailto:sugandha@iisermohali.ac.in}{sugandha@iisermohali.ac.in}}
\author[A. del R\'{\i}o]{\'{A}ngel del R\'{\i}o}
\address{(\'{A}ngel del R\'{\i}o) Departamento de Matem\'{a}ticas, Universidad de Murcia, 30100, Murcia, Spain.}
\email{\href{mailto:sugandha@iisermohali.ac.in}{adelrio@um.es}}
\thanks{The work of the first author was partially supported by a postdoctoral fellowship of the FWO (Research Foundation Flanders). The second author is grateful to Onderzoeksraad Vrije Universiteit Brussel and to the Luxembourg Centre for Educational Testing. The third author gratefully acknowledges the support by DST (Department of Science and Technology), India (INSPIRE/04/2017/000897) and the local hospitality provided by \'{A}. del R\'{i}o for stay at Universidad de Murcia, Murcia, Spain which played a vital role in the outcome of this paper. The last author is partially supported by Grant 19880/GERM/15 funded by Fundaci\'{o}n S\'{e}neca of Murcia, and Grant PID2020-113206GB-I00 funded by MCIN/AEI/10.13039/501100011033.}
\keywords{Gruenberg-Kegel graph, \cut property, Group rings, groups of units}
\subjclass[2010]{16S34, 16U60, 20C05, 20C15}
\begin{document}

\maketitle

\begin{abstract} The Gruenberg-Kegel graph of a group is the undirected graph whose vertices are those primes which occur as the order of an element of the group, and distinct vertices $p$, $q$ are  joined by an edge whenever the group has an element of order $pq$. 
	It reflects interesting properties of the group.
	A group is said to be \cut if the central units of its integral group ring are trivial. 
	This is a rich family of groups, which contains the well studied class of rational groups, and has received attention recently. In the first part of this paper we give a complete classification of the Gruenberg-Kegel graphs of finite solvable \cut groups which have at most three elements in their prime spectrum. For the remaining cases of finite solvable \cut groups, we strongly restrict the list of the possible Gruenberg-Kegel graphs and realize most of them by finite solvable \cut groups. Likewise, we give a list of the possible Gruenberg-Kegel graphs of finite solvable rational groups and realize  as such all but one of them. As an application, we completely classify the Gruenberg-Kegel graphs of metacyclic, metabelian, supersolvable, metanilpotent and $2$-Frobenius groups  for the classes of \cut  groups and rational groups, respectively. The Prime Graph Question asks whether the Gruenberg-Kegel graph of a group coincides with that of the group of normalized units of its integral group ring. The recent appearance of a counter-example for the First Zassenhaus Conjecture on the torsion units of integral group rings has highlighted the relevance of this question. We answer the Prime Graph Question for integral group rings for finite rational groups and most finite \cut groups.
\end{abstract}

\section{Introduction}

Let $G$ be a group. 
The \emph{Gruenberg-Kegel graph of $G$}, which we abbreviate here as \emph{GK-graph of $G$} and denote by $\GK(G)$, is the undirected loop-free and multiple-free graph whose vertices are the primes which occur as orders of elements of $G$, and two vertices $p$ and $q$ are joined by an edge, if and only if $G$ contains an element of order $pq$. 
In many references the GK-graph of $G$ is also called the \emph{prime graph} of $G$. 
This naively built graph encodes interesting properties of a group. For instance, the GK-graph of a finite solvable group is disconnected if, and only if, the group is Frobenius or $2$-Frobenius (see \Cref{Frobenius} for definition and references). Also for arbitrary finite groups it reflects the structure of Hall subgroups (see, for example \cite[Theorem~3]{Williams}). Several groups are even completely determined by their GK-graph, e.g.\ most $\PSL(2,p)$ and all $\PGL(2,p^k)$ for $p$ a prime and $k \geqslant 2$ \cite{KKK07,AKK10} to name a few popular ones. For these reasons, the study of the GK-graph is an active field of research \cite{AKK10,ZM13,GKKM14,BC15,GKLNS,MP16,BS19,CM,GV21}. 

If $\Gamma$ is the GK-graph of a group of a given type then we say that $\Gamma$ is \emph{realizable as the GK-graph} of groups of that type. We assume all graphs to be undirected loop-free and multiple-free and all vertices are labeled by distinct prime numbers. Every graph $\Gamma$ is realizable as the GK-graph of a group, namely the free product $*_{n\in X} C_n$, where $X$ is formed by the vertices of $\Gamma$ and the products $pq$ with $p-q$ an edge of $\Gamma$. However, for finite groups, the situation is much more restrictive. 
For example, the number of connected components of a graph realizable as the GK-graph of a finite group is at most 6 \cite{Williams,Kon90}. 
Recently there was interest in deciding which graphs are realizable as the GK-graph of a finite group and a complete answer was, for instance, obtained for complete bipartite graphs in \cite{MP16}. Moreover the graphs realizable as GK-graphs of finite solvable groups were explicitly characterized in \cite{GKLNS}.

A class of groups that is of recent interest is that of \cut groups, where  a group is said to be \cut if all \textbf{c}entral \textbf{u}nits in its integral group ring are \textbf{t}rivial, i.e.,\ scalar multiples of central group elements \cite{BMP17,Mah18,Bac,BCJM,Tre19,Bac19,BBM20,Gri20,Mor22}. 
The term \cut was coined in \cite{BMP17}. 
It was observed later that, by a previous theorem 
(see \cite{RS} or \cite[Corollary~7.1.15]{JdR1}), the finite \cut groups coincide with those studied by group theorists, under the name of inverse semi-rational groups \cite{CD}.
The class of \cut groups contains the class of finite rational groups but is considerably bigger, e.g.\ $86.62 \%$ of all groups up to order $512$ are \cut while only $0.57\%$ of them are rational \cite[Section~7]{BCJM}. Since it turned out that $G$ being a \cut group is a major obstruction for certain fixed point properties (such as Kazhdan's property (T)) of the unit group of its integral group ring $\mathbb{Z}G$, this class of groups also appeared naturally in the study of these properties and in the proof of a virtual unit theorem for non-trivial amalgamated products \cite{BJJKT18_1,BJJKT18_2}.

The main goal of this article is to contribute to the classification of the GK-graphs of finite solvable \cut groups. In the process, we also study the GK-graphs of finite rational groups.
Relevant information on the elements of prime power order in such groups, and hence on the vertices of their GK-graphs, is well known by results in \cite{Gow,CD,Bac,Mah18}. 
For example,  a finite solvable \cut (respectively, rational) group has at most $4$ (respectively, $3$) vertices (see \Cref{2357} below). Most of the work in the present paper concerns the edges of the GK-graph of a solvable \cut group. The GK-graphs of finite solvable \cut groups with at most 3 vertices are completely classified in our first result.

\begin{maintheorem}\label{TLess4}
A graph with at most three vertices is realizable as the GK-graph of a non-trivial finite solvable \cut group if and only if it is one of those appearing in \Cref{Less4}. 

\begin{figure}[h]

	\begin{tabular}{|c|c|}
		\hline
		  & Graphs \\\hline
		1 vertex & \emph{(a)}\hspace{-0.8cm}
		\begin{subfigure}{.15\textwidth}
			\centering 
			\begin{tikzpicture};
			\node[label=west:{$2$}] at (0,0) (2){};
			\foreach \p in {2}{
				\draw[fill=black] (\p) circle (0.075cm);
			}
			\end{tikzpicture}
		\end{subfigure}
		\hspace{3cm} 
		\emph{(b)}\hspace{-0.8cm}
		\begin{subfigure}{.15\textwidth}
			\centering 
			\begin{tikzpicture}
			\node[label=east:{$3$}] at (0.5,1) (3){};
			\foreach \p in {3}{
				\draw[fill=black] (\p) circle (0.075cm);
			}
			\end{tikzpicture}
		\end{subfigure} 
		\\\hline
		2 vertices & \emph{(c)}
		\hspace{-0.5cm}
		\begin{subfigure}{.15\textwidth}
			\centering 
			\begin{tikzpicture}
			\node[label=west:{$2$}] at (0,1) (2){};
			\node[label=east:{$3$}] at (0.5,1) (3){};
			\foreach \p in {2,3}{
				\draw[fill=black] (\p) circle (0.075cm);
			}
			\end{tikzpicture}
		\end{subfigure}
		\emph{(d)}
		\hspace{-0.5cm}
		\begin{subfigure}{.15\textwidth}
			\centering 
			\begin{tikzpicture}
			\node[label=west:{$2$}] at (0,1) (2){};
			\node[label=east:{$3$}] at (0.5,1) (3){};
			\foreach \p in {2,3}{
				\draw[fill=black] (\p) circle (0.075cm);
			}
			\draw (2)--(3);
			\end{tikzpicture}
		\end{subfigure} 
		\emph{(e)}
		\hspace{-1cm}
		\begin{subfigure}{.15\textwidth}
			\centering
			\begin{tikzpicture}
			\node[label=west:{$2$}] at (0,0.5) (2){};
			\node[label=west:{$5$}] at (0,0) (3){};
			\foreach \p in {2,3}{
				\draw[fill=black] (\p) circle (0.075cm);
			}
			\end{tikzpicture}
		\end{subfigure}
		\emph{(f)}
		\hspace{-1cm}
		\begin{subfigure}{.15\textwidth}
			\centering 
			\begin{tikzpicture}
			\node[label=west:{$2$}] at (0,0.5) (2){};
			\node[label=west:{$5$}] at (0,0) (5){};
			\foreach \p in {2,3}{
				\draw[fill=black] (\p) circle (0.075cm);
			}
			\draw (2)--(3);
			\end{tikzpicture}
		\end{subfigure}
		\emph{(g)}
		\hspace{-1cm}
		\begin{subfigure}{.15\textwidth}
			\centering 
			\begin{tikzpicture}
			\node[label=east:{$3$}] at (0.5,0.5) (3){};
			\node[label=east:{$7$}] at (0.5,0) (7){};
			\foreach \p in {3,7}{
				\draw[fill=black] (\p) circle (0.075cm);
			}
			\end{tikzpicture}
		\end{subfigure}
		\\\hline
		& 
		\emph{(h)}\hspace{-0.3cm}
		\begin{subfigure}{.15\textwidth}
			\centering
			\begin{tikzpicture}
			\node[label=west:{$2$}] at (0,0.5) (2){};
			\node[label=east:{$3$}] at (0.5,0.5) (3){};
			\node[label=west:{$5$}] at (0,0) (5){};
			\foreach \p in {2,3,5}{
				\draw[fill=black] (\p) circle (0.075cm);
			}
			\draw (2)--(3);
			\end{tikzpicture}
		\end{subfigure}
		\emph{(i)}\hspace{-0.3cm}
		\begin{subfigure}{.15\textwidth}
			\centering
			\begin{tikzpicture}
			\node[label=west:{$2$}] at (0,0.5) (2){};
			\node[label=east:{$3$}] at (0.5,0.5) (3){};
			\node[label=west:{$5$}] at (0,0) (5){};
			\foreach \p in {2,3,5}{
				\draw[fill=black] (\p) circle (0.075cm);
			}
			\draw (2)--(3);
			\draw (2)--(5);
			\end{tikzpicture}
		\end{subfigure} 
		\emph{(j)}\hspace{-0.3cm}
		\begin{subfigure}{.15\textwidth}
			\centering
			\begin{tikzpicture}
			\node[label=west:{$2$}] at (0,0.5) (2){};
			\node[label=east:{$3$}] at (0.5,0.5) (3){};
			\node[label=west:{$5$}] at (0,0) (5){};
			\foreach \p in {2,3,5}{
				\draw[fill=black] (\p) circle (0.075cm);
			}
			\draw (2)--(3);
			\draw (3)--(5);
			\end{tikzpicture}
		\end{subfigure}
		\emph{(k)}\hspace{-0.3cm}
		\begin{subfigure}{.15\textwidth}
			\centering
			\begin{tikzpicture}
			\node[label=west:{$2$}] at (0,0.5) (2){};
			\node[label=east:{$3$}] at (0.5,0.5) (3){};
			\node[label=west:{$5$}] at (0,0) (5){};
			\foreach \p in {2,3,5}{
				\draw[fill=black] (\p) circle (0.075cm);
			}
			\draw (2)--(3);
			\draw (2)--(5);
			\draw (3)--(5);
			\end{tikzpicture}
		\end{subfigure}
		\\
		3 vertices& \\
		& \emph{(l)}\hspace{-0.3cm}
		\begin{subfigure}{.15\textwidth}
			\centering 
			\begin{tikzpicture}
			\node[label=west:{$2$}] at (0,0.5) (2){};
			\node[label=east:{$3$}] at (0.5,0.5) (3){};
			\node[label=east:{$7$}] at (0.5,0) (7){};
			\foreach \p in {2,3,7}{
				\draw[fill=black] (\p) circle (0.075cm);
			}
			\draw (2)--(3);
			\end{tikzpicture}
		\end{subfigure}
		\emph{(m)}\hspace{-0.3cm}
		\begin{subfigure}{.15\textwidth}
			\centering
			\begin{tikzpicture}
			\node[label=west:{$2$}] at (0,0.5) (2){};
			\node[label=east:{$3$}] at (0.5,0.5) (3){};
			\node[label=east:{$7$}] at (0.5,0) (7){};
			\foreach \p in {2,3,7}{
				\draw[fill=black] (\p) circle (0.075cm);
			}
			\draw (2)--(3);
			\draw (2)--(7);
			\end{tikzpicture}
		\end{subfigure}
		\emph{(n)}\hspace{-0.3cm}
		\begin{subfigure}{.15\textwidth}
			\centering
			\begin{tikzpicture}
			\node[label=west:{$2$}] at (0,0.5) (2){};
			\node[label=east:{$3$}] at (0.5,0.5) (3){};
			\node[label=east:{$7$}] at (0.5,0) (7){};
			\foreach \p in {2,3,7}{
				\draw[fill=black] (\p) circle (0.075cm);
			}
			\draw (2)--(3);
			\draw (3)--(7);
			\end{tikzpicture}
		\end{subfigure} 
		\emph{(o)}\hspace{-0.3cm}
		\begin{subfigure}{.15\textwidth}
			\centering 
			\begin{tikzpicture}
			\node[label=west:{$2$}] at (0,0.5) (2){};
			\node[label=east:{$3$}] at (0.5,0.5) (3){};
			\node[label=east:{$7$}] at (0.5,0) (7){};
			\foreach \p in {2,3,7}{
				\draw[fill=black] (\p) circle (0.075cm);
			}
			\draw (2)--(3);
			\draw (2)--(7);
			\draw (3)--(7);
			\end{tikzpicture}
		\end{subfigure}
		\\\hline
	\end{tabular}
\caption{\label{Less4} GK-graphs of solvable \cut groups with at most 3 vertices.}
\end{figure}
\end{maintheorem}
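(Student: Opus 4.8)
The plan is to prove the two implications of \Cref{TLess4} separately: that every graph in \Cref{Less4} is realizable as the GK-graph of a finite solvable \cut group, and that no other graph on at most three vertices occurs. Throughout I will use the standard working description of \cut groups (equivalent to the definition by \cite{RS,JdR1}): a finite group $G$ is \cut if and only if for every $g \in G$, writing $n$ for the order of $g$, each power $g^j$ with $\gcd(j,n)=1$ is conjugate in $G$ to $g$ or to $g^{-1}$. Equivalently, the set $A_g=\{j\in(\ZZ/n\ZZ)^\times : g^j\sim g\}$, which is the image of $\N_G(\langle g\rangle)/\C_G(\langle g\rangle)$ in $\Aut(\langle g\rangle)=(\ZZ/n\ZZ)^\times$, either equals $(\ZZ/n\ZZ)^\times$ (the element $g$ is rational) or has index $2$ with $-1\notin A_g$. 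This criterion, applied to elements of prime and of composite order, drives the whole argument.

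For the necessity direction I would first invoke \Cref{2357} to restrict the vertex set $V$ to a subset of $\{2,3,5,7\}$, and then determine which subsets survive. If $5$ or $7$ were the unique vertex, then for an element $g$ of that prime order the quotient $\N_G(\langle g\rangle)/\C_G(\langle g\rangle)$ would itself be a $p$-group and hence map trivially into the cyclic group $(\ZZ/p\ZZ)^\times$ of order $p-1\in\{4,6\}$, forcing $A_g=1$ of index $4$ or $6$ and violating the criterion; this excludes the singletons $\{5\}$ and $\{7\}$. More precisely, an order-$5$ element must be rational, since the unique index-$2$ subgroup of $(\ZZ/5\ZZ)^\times\cong C_4$ contains $-1$; realizing $A_g=C_4$ requires an order-$4$ automorphism of $\langle g\rangle$ and hence a group element of order divisible by $4$, so $5\in V$ forces $2\in V$. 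Likewise an order-$7$ element needs $A_g$ of order at least $3$, forcing an order-$3$ automorphism of $\langle g\rangle$ and so $3\in V$. These two implications, combined with \Cref{2357}, leave exactly the vertex sets appearing in \Cref{Less4}. The edges are then constrained in two ways. Since $G$ is solvable, its GK-graph is connected or has exactly two connected components, the latter precisely when $G$ is Frobenius or $2$-Frobenius (see \Cref{Frobenius}); in particular at most one of three vertices can be isolated, which already forbids every graph with three pairwise non-adjacent vertices. Moreover an edge between $p$ and $q$ means there is an element $g$ of order $pq$, and the criterion applied to $g$ forces additional structure: the automorphism realizing $A_g$ on one cyclic factor of $\langle g\rangle$ typically centralises the complementary factor, yielding a commuting pair and hence a further edge or vertex. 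Thus an element of order $21$ forces an order-$2$ automorphism of its $C_7$-part and so $2\in V$ (so the edge between $3$ and $7$ cannot appear in the two-vertex case, leaving only the disconnected graph \emph{(g)}), an element of order $14$ forces $3\in V$, and an element of order $15$ produces commuting elements of orders $4$ and $3$, hence an element of order $6$ and an edge between $2$ and $3$. Running through the admissible vertex sets with these rules eliminates every edge pattern except the fifteen in \Cref{Less4}.

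For the realizability direction I would exhibit an explicit finite solvable \cut group for each graph, verifying the \cut property element-by-element through the criterion above. The one- and two-vertex graphs come from small rational or \cut groups: $C_2$ and $C_3$ give \emph{(a)} and \emph{(b)}; the rational group $S_3$ gives \emph{(c)}; the \cut group $C_6$ gives \emph{(d)}; the Frobenius group $C_5\rtimes C_4$ gives \emph{(e)}; a group with a rational element of order $10$ (for instance a suitable subgroup of $\Aut(C_{10})\ltimes C_{10}$) gives \emph{(f)}; and the Frobenius group $C_7\rtimes C_3$ gives \emph{(g)}. The three-vertex graphs are then assembled from such building blocks, using genuine Frobenius or $2$-Frobenius constructions whenever a vertex must remain isolated, since naive direct products introduce unwanted commuting elements and hence spurious edges.

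The main obstacle is the edge analysis on the necessity side. One must exclude every non-listed edge pattern uniformly, and the delicate ingredient is exactly the ``forced-edge'' phenomenon: realizing the required subgroup $A_g$ for a composite-order element $g$ compels a commuting pair of elements, and thereby an extra edge (as in the order-$15$ case) or an extra vertex (as in the order-$14$ and order-$21$ cases). Tracking these implications across all admissible vertex sets, together with constructing the two-component three-vertex witnesses on the realizability side, is where the real work lies.
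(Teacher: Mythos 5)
Your skeleton matches the paper's in outline: restrict the vertex set via \Cref{2357} together with the forcing $5\in\pi(G)\Rightarrow 2\in\pi(G)$ and $7\in\pi(G)\Rightarrow 3\in\pi(G)$ (this is exactly \Cref{PrimeSpectrum}), exclude the empty three-vertex graphs by a connectivity or three-primes argument, derive extra edges from elements of composite order, and realize the surviving graphs by Frobenius groups and direct products (the paper's \Cref{PGroups}). But your necessity argument has a genuine gap, in two respects. First, your forced-edge rules for elements of order $14$ and $21$ are stated too weakly: you conclude only $3\in V$ (respectively $2\in V$), whereas the correct conclusion --- the one proved in \Cref{6and35} --- is that $G$ contains an element of order $6$, i.e.\ the \emph{edge} $2-3$ lies in the GK-graph. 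With your weaker versions, the three-vertex graphs on $\{2,3,7\}$ whose edge sets are $\{2-7\}$, $\{3-7\}$ or $\{2-7,3-7\}$ are never eliminated, because the forced vertex is already present. This part is repairable inside your own framework: the element of $\N_G(\GEN{g})$ inducing the order-$3$ automorphism of $\GEN{g}$ for $|g|=14$ centralizes the $2$-part of $g$ (and analogously for $|g|=21$), so one obtains an element of order $6$, not merely a vertex.

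Second, and more seriously, the graph with vertex set $\{2,3,5\}$ and single edge $2-5$ (isolated vertex $3$) survives \emph{all} of your rules: it has only two components, so connectivity does not exclude it, and the only composite element order it forces is $10$, from which no local $B_G(g)$ computation yields a new edge or vertex (it only yields an element of order $4$). No element-order bookkeeping of the kind you propose can rule this graph out; the paper needs global structure theory at exactly this point. Namely: since the graph is disconnected, $G$ is Frobenius or $2$-Frobenius by \Cref{ConnectedComponents}; the classification of Frobenius \cut groups (\Cref{FrobeniusCut}) excludes the Frobenius case; in the $2$-Frobenius case, \Cref{GKLNS} forces $G/\F_1(G)$ to be a Frobenius \cut group with cyclic kernel of odd order and cyclic complement, hence $S_3$ or $C_5\rtimes C_4$, the latter being incompatible with the presence of an element of order $10$; finally, with $G/\F_1(G)\simeq S_3$, an element $g$ of order $5$ in the nilpotent group $\F_1(G)$ must be rational by \Cref{PrimePower}, which requires some $h$ of order $4$ with $[g,h^2]\neq 1$ --- impossible, since $h^2\in\F_1(G)$ and elements of coprime order in a nilpotent group commute. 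This argument is the heart of the paper's proof of \Cref{TLess4} and is entirely absent from your proposal; as written, your plan would declare the graph $(2-5\hspace{.4cm}3)$ realizable. (A smaller issue: your realizability step for the three-vertex graphs is only asserted; the paper exhibits explicit groups and verifies the \cut property of the relevant direct products via \Cref{ElementaryDP}.)
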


\begin{maincorollary}
Let $\Gamma$ be a graph with three vertices. Then $\Gamma$ is the GK-graph of a finite solvable \cut group if and only if $2-3$ is an edge of $\Gamma$ and either $5$ or $7$ is a vertex of $\Gamma$. 
\end{maincorollary}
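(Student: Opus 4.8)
The plan is to deduce this corollary directly from \Cref{TLess4}, since it merely repackages the list of admissible three-vertex graphs displayed in \Cref{Less4} into a single uniform combinatorial condition. First I would dispose of the forward implication: if $\Gamma$ has three vertices and is realizable as the GK-graph of a finite solvable \cut group, then by \Cref{TLess4} it is one of the graphs (h)--(o). Scanning this list, each of these graphs contains the edge $2-3$ and has $5$ or $7$ among its vertices, so the stated condition holds.

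For the converse I would argue that the hypotheses pin down the vertex set completely. Since $2-3$ is an edge, both $2$ and $3$ are vertices; as $\Gamma$ has exactly three vertices, there is a single remaining vertex $v$. The assumption that $5$ or $7$ occurs as a vertex, combined with the fact that $v$ is the only candidate distinct from $2$ and $3$, forces $v=5$ or $v=7$ (in particular $5$ and $7$ cannot both occur, as that would require four vertices). Hence the vertex set is $\{2,3,5\}$ or $\{2,3,7\}$, and in either case the edge $2-3$ is present.

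It then remains to observe that every graph on one of these two vertex sets containing the edge $2-3$ already appears in \Cref{Less4}. On $\{2,3,5\}$ with $2-3$ fixed, the only freedom lies in the presence of the two potential edges $2-5$ and $3-5$, giving exactly the four graphs (h), (i), (j), (k); likewise on $\{2,3,7\}$ one obtains (l), (m), (n), (o). Thus $\Gamma$ coincides with one of (h)--(o) and is therefore realizable by \Cref{TLess4}.

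The main (and essentially only) point requiring care is the bookkeeping in this last step: confirming that \Cref{Less4} exhausts all four edge-configurations over each admissible vertex set, so that the combinatorial condition carves out precisely the graphs (h)--(o) and nothing more. I expect no genuine obstacle beyond this verification, since all the mathematical content is already supplied by \Cref{TLess4} and the corollary is purely a matter of reformulating its conclusion.
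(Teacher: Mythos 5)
Your proof is correct and matches the paper's (implicit) argument: the paper offers no separate proof of this corollary, treating it exactly as you do, namely as a direct reading-off from \Cref{TLess4}, since the graphs (h)--(o) in \Cref{Less4} are precisely the three-vertex graphs containing the edge $2-3$ with third vertex $5$ or $7$. Your bookkeeping that each admissible vertex set $\{2,3,5\}$ and $\{2,3,7\}$ contributes all four edge-configurations containing $2-3$ is the only verification needed, and it checks out.
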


For the remaining case of graphs with four vertices we obtain the following result which strongly restricts the possible GK-graphs of finite solvable \cut groups.

\begin{maintheorem}\label{T4}
	Each graph in the upper line in \Cref{Four} is realizable as the GK-graph of a finite solvable \cut group. Any other graph with at least four vertices which is realizable as the GK-graph of a finite {non-trivial} solvable \cut group is one of the four in the lower line of \Cref{Four}.
\begin{figure}[h]
	\begin{tabular}{|c|c|c|c|c|}
		\hline
		 & 4 edges & 5 edges & 6 edges \\\hline
		Verified &  
		\begin{subfigure}{0\textwidth}
		\setcounter{subfigure}{15}
		\end{subfigure}
		\emph{(p)}\hspace{-0.4cm}
		\begin{subfigure}{.15\textwidth}
			\centering
			\begin{tikzpicture}
			\node[label=west:{$2$}] at (0,0.5) (2){};
			\node[label=east:{$3$}] at (0.5, 0.5) (3){};
			\node[label=west:{$5$}] at (0, 0) (5){};
			\node[label=east:{$7$}] at (0.5, 0) (7){};
			\foreach \p in {2,3,5,7}{
				\draw[fill=black] (\p) circle (0.075cm);
			}
			\draw (2)--(3);
			\draw (2)--(7);;
			\draw (3)--(5);
			\draw (5)--(7);
			\end{tikzpicture}
		\end{subfigure}
		& \emph{(q)}\hspace{-0.4cm}
		\begin{subfigure}{.15\textwidth}
			\centering
			\begin{tikzpicture}
			\node[label=west:{$2$}] at (0,0.5) (2){};
			\node[label=east:{$3$}] at (0.5, 0.5) (3){};
			\node[label=west:{$5$}] at (0, 0) (5){};
			\node[label=east:{$7$}] at (0.5, 0) (7){};
			\foreach \p in {2,3,5,7}{
				\draw[fill=black] (\p) circle (0.075cm);
			}
			\draw (2)--(3);
			\draw (2)--(5);
			\draw (2)--(7);;
			\draw (3)--(5);
			\draw (5)--(7);
			\end{tikzpicture}
		\end{subfigure}
		& \emph{(r)}\hspace{-0.4cm}
		\begin{subfigure}{.15\textwidth}
			\centering
			\begin{tikzpicture}
			\node[label=west:{$2$}] at (0,0.5) (2){};
			\node[label=east:{$3$}] at (0.5, 0.5) (3){};
			\node[label=west:{$5$}] at (0, 0) (5){};
			\node[label=east:{$7$}] at (0.5, 0) (7){};
			\foreach \p in {2,3,5,7}{
				\draw[fill=black] (\p) circle (0.075cm);
			}
			\draw (2)--(3);
			\draw (2)--(5);
			\draw (2)--(7);;
			\draw (3)--(5);
			\draw (3)--(7);
			\draw (5)--(7);
			\end{tikzpicture}
		\end{subfigure}
		\\\hline
		Possible & \emph{(s)}\hspace{-0.4cm}
		\begin{subfigure}{.15\textwidth}
			\centering
			\begin{tikzpicture}
			\node[label=west:{$2$}] at (0,0.5) (2){};
			\node[label=east:{$3$}] at (0.5, 0.5) (3){};
			\node[label=west:{$5$}] at (0, 0) (5){};
			\node[label=east:{$7$}] at (0.5, 0) (7){};
			\foreach \p in {2,3,5,7}{
				\draw[fill=black] (\p) circle (.075cm);
			}
			\draw (2)--(3);
			\draw (2)--(7);
			\draw (3)--(5);
			\draw (3)--(7);
			\end{tikzpicture}
		\end{subfigure}
		\emph{(t)}\hspace{-0.4cm}
		\begin{subfigure}{.15\textwidth}
			\centering
			\begin{tikzpicture}
			\node[label=west:{$2$}] at (0,0.5) (2){};
			\node[label=east:{$3$}] at (0.5, 0.5) (3){};
			\node[label=west:{$5$}] at (0, 0) (5){};
			\node[label=east:{$7$}] at (0.5, 0) (7){};
			\foreach \p in {2,3,5,7}{
				\draw[fill=black] (\p) circle (.075cm);
			}
			\draw (2)--(3);
			\draw (2)--(5);
			\draw (2)--(7);
			\draw (3)--(5);
			\end{tikzpicture}
		\end{subfigure}
		& \emph{(u)}\hspace{-0.4cm}
        \begin{subfigure}{.15\textwidth}
			\centering
			\begin{tikzpicture}
			\node[label=west:{$2$}] at (0,0.5) (2){};
			\node[label=east:{$3$}] at (0.5, 0.5) (3){};
			\node[label=west:{$5$}] at (0, 0) (5){};
			\node[label=east:{$7$}] at (0.5, 0) (7){};
			\foreach \p in {2,3,5,7}{
				\draw[fill=black] (\p) circle (.075cm);
			}
			\draw (2)--(3);
			\draw (2)--(7);
			\draw (3)--(5);
			\draw (3)--(7);
			\draw (5)--(7);
			\end{tikzpicture}
		\end{subfigure} 
		\emph{(v)}\hspace{-0.4cm}
		\begin{subfigure}{.15\textwidth}
			\centering
			\begin{tikzpicture}
			\node[label=west:{$2$}] at (0,0.5) (2){};
			\node[label=east:{$3$}] at (0.5, 0.5) (3){};
			\node[label=west:{$5$}] at (0, 0) (5){};
			\node[label=east:{$7$}] at (0.5, 0) (7){};
			\foreach \p in {2,3,5,7}{
				\draw[fill=black] (\p) circle (.075cm);
			}
			\draw (2)--(3);
			\draw (2)--(5);
			\draw (2)--(7);
			\draw (3)--(5);
			\draw (3)--(7);
			\end{tikzpicture}
		\end{subfigure}
		&
		\\\hline
	\end{tabular}
	\caption{\label{Four} GK-graphs of finite solvable \cut groups with 4 vertices.}
\end{figure}

\end{maintheorem}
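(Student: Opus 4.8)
The statement splits into a realizability half (the graphs $(p),(q),(r)$ occur) and a restriction half (every realizable four-vertex graph lies among the seven in \Cref{Four}); I would treat them separately. Throughout, by \Cref{2357} the prime spectrum of a finite solvable \cut group is contained in $\{2,3,5,7\}$, so a four-vertex GK-graph has vertex set exactly $\{2,3,5,7\}$ and the problem reduces to deciding which of the six possible edges occur. Two inputs drive everything. First, a solvability input: the GK-graph of a finite solvable group has no three pairwise non-adjacent vertices (its complement is triangle-free), so only finitely many candidate graphs survive. Second, a \cut input obtained by specializing the (inverse) semi-rationality condition to each relevant order: writing $A_n(g)=\{j\in(\ZZ/n)^*: g^j\sim g\}$, the \cut condition forces $(\ZZ/n)^*=A_n(g)\cup(-A_n(g))$, and evaluating this at $n=5,10$ shows every element of order $5$ or $10$ is rational (automizer $\N_G(\GEN{g})/\C_G(\GEN{g})\cong C_4$), while at $n=7,14$ it shows every such element is either rational or conjugate exactly to its quadratic-residue powers (automizer $C_6$ or $C_3$); at $n=15$ one similarly gets an index-$\le 2$ automizer inside $\ZZ/2\times\ZZ/4$. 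These per-order dictionaries record precisely how the local structure at a vertex must look whenever a given edge is present.

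For the realizability half I would exhibit explicit finite solvable \cut groups for $(p),(q),(r)$ and check them against the dictionary above (in practice confirming the \cut property by computer). The design principle is modular: Frobenius pieces such as $C_5\rtimes C_4$ and $C_7\rtimes C_3$ (or $C_7\rtimes C_6$) make the order-$5$ element rational and the order-$7$ element semi-rational, thereby meeting the local \cut conditions, while central involutions and carefully chosen common complements switch the cross-prime edges on or off: a factor centralizing a $p$-element creates the edge $2-p$ (an element of order $2p$), whereas a fixed-point-free action suppresses it. Assembling such pieces so that $5$ and $7$ acquire the automorphisms needed to render elements of order $10,15,21,35$ semi-rational is what produces the increasingly connected graphs $(p)\subset(q)\subset(r)$, the last being $K_4$.

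For the restriction half I would run through the finitely many four-vertex graphs with triangle-free complement and eliminate every one not on the list, in two stages. Stage one forces the mandatory edges $2-3$, $2-7$, $3-5$: if, say, $3-5$ were absent then $3$ and $5$ never commute, so a Hall $\{3,5\}$-subgroup is Frobenius-like; but order-$5$ elements are rational in $G$ (automizer $C_4$, realized by $2$-elements) and $\Aut$ of the relevant $5$-group admits no order-$3$ automorphism in the permitted ranks, and together with the no-independent-triple constraint and the forced edges at the other primes this is contradictory --- and symmetrically for $2-7$ and $2-3$. Stage two establishes the ``at least one chord'' clause: assuming none of $2-5$, $3-7$, $5-7$ occurs, I would pass to a quotient of $G$ (quotients of \cut groups are \cut) whose spectrum has exactly three primes and invoke \Cref{TLess4} together with its Corollary, landing in a three-vertex configuration that the Corollary forbids, or else derive the contradiction directly from the centralizer-order constraints ($2\nmid|\C_G(x_5)|$, $7\nmid|\C_G(x_5)|$, etc.) imposed by the missing edges.

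The main obstacle is Stage one together with the elimination of the ``wrong'' highly connected graphs --- for instance the alternative four-cycle $2-5-3-7-2$ (edges $25,35,37,27$, missing $23,57$), which has triangle-free complement and no individually \cut-forbidden edge. No single local condition excludes it; the obstruction is global, requiring one to control all four automizers simultaneously inside a single solvable group. Because the \cut property is not inherited by subgroups, these arguments cannot be reduced to Hall subgroups and must be carried out in $G$ itself or through the limited supply of \cut quotients, and keeping the semi-rationality bookkeeping consistent across all four primes at once is where the real difficulty lies.
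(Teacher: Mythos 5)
Your realizability half coincides with the paper's: the graphs (p), (q), (r) are realized by the direct products $[C_5^2\rtimes_\Frob Q_8]\times[C_7\rtimes_\Frob C_3]$ and variants (\Cref{PGroups}), verified via \Cref{ElementaryDP} and \Cref{FrobeniusCut}, exactly the modular construction you describe. Your two general inputs are also the paper's (\Cref{Higman3Vertices} and the automizer dictionary of \Cref{PrimePower}/\Cref{6and35}). The gap is that these inputs only narrow the field: after applying them one is still left with the two disconnected graphs ($K_3$ on $\{2,3,5\}$ or on $\{2,3,7\}$ plus an isolated vertex), the two $3$-edge paths $7-2-3-5$ and $5-2-3-7$, and the two $4$-edge graphs with edge sets $\{2-3,2-5,2-7,3-7\}$ and $\{2-3,2-5,3-5,3-7\}$, and neither of your proposed mechanisms eliminates them. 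Your Stage two fails concretely: for the path $7-2-3-5$, a minimal counterexample $G$ has (by \Cref{MANS}) an elementary abelian normal Sylow $p$-subgroup $A$ with $p\in\{5,7\}$, but then $\GK(G/A)$ is contained in the graph with edges $2-3,\,2-7$ (if $p=5$) or $2-3,\,3-5$ (if $p=7$), i.e.\ in graph (m) or (j) of \Cref{Less4} --- both \emph{allowed} by \Cref{TLess4} and its corollary, so passing to a three-prime quotient produces no contradiction whatsoever. Excluding this path is in fact the hardest part of the paper's proof (\Cref{4Vertices3Edges}): it requires Lucido's Fitting-length theorem (\Cref{DiameterFL}, including the $2.S_4$-section case), Thompson's nilpotency theorem for Frobenius kernels, \Cref{CyclicSylow} and \Cref{G2G3Exclusions}, an explicit $\FF_5Q_8$-module computation, a GAP-assisted identification of the Hall $\{2,3,7\}$-structure, and a separate minimal-counterexample induction for the Fitting-length-$4$ case.

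Your Stage one is also not salvageable as written. The claim that the automorphism group of ``the relevant $5$-group'' admits no order-$3$ automorphism is false: $\GL_2(\FF_5)$ has elements of order $3$, and $C_5^2\rtimes C_3$ is a Frobenius group. More structurally, graph (h) of \Cref{Less4} (edge $2-3$, isolated vertex $5$) \emph{is} realizable, so no argument local to the primes $2,3,5$ can force the edge $3-5$; the forcing must exploit the presence of the fourth prime $7$, which your sketch never does (the paper's Cases 2 and 3 in the proof of \Cref{T4} do contain the germ of your Hall-subgroup idea, but the contradiction there comes from Thompson's theorem forcing a nilpotent Frobenius kernel in which $5$- or $7$-elements cannot be rational, not from a rank count on automorphisms). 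Finally, your diagnostic example is mistaken: the $4$-cycle $2-5-3-7-2$ (missing $2-3$ and $5-7$) \emph{is} killed by a single local condition, since the edge $3-5$ forces $2-3$ by the order-$15$ part of \Cref{6and35}. The graphs genuinely requiring global arguments are the ones listed above; in particular the disconnected ones are excluded in the paper by the classification of Frobenius and $2$-Frobenius \cut groups (\Cref{Frobenius3Vertices}, \Cref{3ImpliesConnected}), a step your proposal omits entirely.
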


We also deduce which  graphs can be realized as the GK-graph of a finite solvable rational group. 

\begin{maintheorem}\label{PrimeGraphRat}
Let $G$ be a non-trivial finite solvable rational group. 
Then $\GK(G)$ is one of the graphs (a), (c), (d), (e), (f), (i) or (k) in Figure~\ref{Less4}.
Moreover, any of these graphs, except possibly (i), is realizable as the GK-graph of a solvable rational group.
\end{maintheorem}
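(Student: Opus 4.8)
The plan is to leverage the \cut classification of \Cref{TLess4} and \Cref{T4} together with the sharper information available for rational groups. Since every finite rational group is \cut, the graph $\GK(G)$ must be one of those in \Cref{Less4} or \Cref{Four}. A theorem of Gow asserts that a finite solvable rational group has order divisible only by the primes $2,3,5$ (a strengthening, in the rational case, of the bound recorded in \Cref{2357}); this removes every graph having $7$ among its vertices, hence all four-vertex graphs and graph (g), leaving precisely the graphs supported on $\{2,3,5\}$: namely (a), (b), (c), (d), (e), (f), (h), (i), (j), (k). I would then note that a non-trivial rational group has even order, since in a group of odd order the identity is the only element conjugate to its inverse; thus $2$ is always a vertex and (b) is discarded. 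What remains is to exclude (h) and (j), that is, to prove the key claim: \emph{if $G$ is finite solvable rational with $3\mid|G|$ and $5\mid|G|$, then $G$ has an element of order $10$}, equivalently that $\C_G(x)$ has even order for some element $x$ of order $5$.

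To prove this claim I would argue by minimal counterexample. Both rationality and the property ``$G$ has no element of order $10$'' pass to quotients, so for a minimal counterexample $G$ every proper quotient $G/N$ fails to be a counterexample; taking $N$ minimal normal (elementary abelian for a single prime $p$) forces $G/N$ to lose one of the primes $3,5$, whence, up to the symmetric situation, $N$ is a normal elementary abelian Sylow $5$-subgroup $P$ and, by Schur--Zassenhaus, $G=P\rtimes H$ with $H\cong G/P$ a rational $\{2,3\}$-group acting on $P$. The claim now reads: some involution of $H$ fixes a non-zero vector of $P$. Two sub-cases frame the difficulty. If $H$ acts fixed-point-freely, then $G$ is Frobenius and $\GK(G)$ is disconnected with $\{5\}$ a whole component, the case (h); this I would rule out via \Cref{Frobenius} and rationality, using that a Frobenius complement that is a $5$-group cannot induce $\Aut(C_5)\cong C_4$ (so order-$5$ elements cannot be rational), and that in the remaining $5$-kernel configuration the fixed-point-free action is incompatible with the simultaneous rationality of the order-$3$ and order-$5$ elements. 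If $H$ does not act fixed-point-freely, some non-identity element fixes a non-zero vector, but a priori only an element of order $3$ (yielding order $15$, the case (j)); the genuine work is to upgrade this to an \emph{involution} with a non-zero fixed vector.

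For that upgrade I would feed in rationality of an order-$5$ element $v\in P$: one has $N_G(\langle v\rangle)/\C_G(v)\cong C_4$, so there is $y$ of order $4$ with $yvy^{-1}=v^2$, and $y^2$ is an involution that \emph{inverts} $v$. The crux is then to show that $y^2$ (or some involution) does not act as $-\mathrm{id}$ on all of $P$, i.e.\ that it has a non-zero fixed space; here I would combine the $\langle y\rangle$-module structure of $P$ with the non-trivial fixed space $\C_P(h)\ni v$ of an order-$3$ element $h$ (available in case (j)) to locate a common fixed vector for an involution, producing an element of order $10$ and the desired contradiction. For the realizability statement I would then exhibit explicit finite solvable rational groups: $C_2$ realizes (a), the rational group $S_3$ realizes (c), and $C_2\times S_3$ realizes (d); the graphs (e), (f) and (k), which demand rational elements of orders $5$, $10$ and $15$, are realized by suitable $\{2,5\}$- and $\{2,3,5\}$-groups of the form $C_5^2\rtimes H$, with rationality verified directly by checking that each element is conjugate to every generator of the cyclic group it generates. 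The graph (i) is left unrealized, accounting for the ``except possibly (i)'' in the statement.

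The hard part will be precisely this order-$10$ forcing, and in particular the upgrade from ``some element fixes a $5$-vector'' (which only yields order $15$) to ``some involution fixes a $5$-vector'': this step is where rationality, rather than mere connectivity of $\GK(G)$, must be used decisively, and it is also the point that separates the excluded graph (j) from the admissible (k). A further, more computational obstacle is the construction of the rational groups realizing (e), (f) and (k): since a central element of order $4$ can never be real, the order-$4$ automorphisms of $C_5$ required for rationality of order-$5$ elements must be induced by non-central, self-inverse-conjugate elements, which forces the Sylow $5$-subgroups to be non-normal and the witnessing groups to be comparatively large.
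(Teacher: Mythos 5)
Your skeleton matches the paper's at the top level: Gow's theorem forces $\pi(G)\subseteq\{2,3,5\}$, evenness of $|G|$ kills (b), the realizations come from $C_2$, $S_3$, $C_2\times S_3$ and groups built on $C_5^2\rtimes Q_8$, and the entire content is concentrated in excluding (h) and (j), i.e.\ in forcing the edge $2-5$ whenever $3$ and $5$ both divide $|G|$. But the two steps you yourself flag as ``the genuine work'' are exactly where the proposal stops being a proof. First, the reduction to ``$N$ is a normal elementary abelian Sylow $5$-subgroup, up to the symmetric situation'' is not legitimate: the cases $p=5$ and $p=3$ are not symmetric, because the two primes enter rationality asymmetrically (order-$5$ elements need a full $C_4$ of induced automorphisms, order-$3$ elements only a $C_2$). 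The paper must treat $p=3$ by a different mechanism: there $\GK(G/A)=(2\;\;5)$ is disconnected, so $G/A$ is Frobenius or $2$-Frobenius; the $2$-Frobenius case dies by \cite[Lemma 4]{DIM}, and the classification of rational Frobenius groups \cite{DS} pins down $G/A\simeq C_5^2\rtimes Q_8$, whence $G_2\simeq Q_8$. Nothing in your sketch produces this.

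Second, and more seriously, your ``upgrade'' step --- from ``some order-$3$ element fixes a nonzero vector of $P$'' to ``some involution fixes a nonzero vector'' --- is never argued; you list ingredients (``combine the $\langle y\rangle$-module structure with $\C_P(h)$'') but give no mechanism, and this is precisely the decisive point. Note that no argument using only the $\{2,5\}$-structure can work: $C_5^2\rtimes Q_8$ is a rational group with no element of order $10$ in which every element of order $5$ is rational, so your hypothetical configuration (a unique involution acting as $-\mathrm{id}$, an element $y$ with $v^y=v^2$) is internally consistent and can only be contradicted through the prime $3$. The paper's mechanism does exactly that, and is much cleaner than a fixed-vector hunt: since graph (j) contains the edge $3-5$, there is a \emph{rational element $g$ of order $15$}, so $B_G(g)=\N_G(\GEN{g})/\C_G(g)$ must be all of $\Aut(C_{15})\simeq C_4\times C_2$; on the other hand the minimal-counterexample structure forces the Sylow $2$-subgroup of $G$ to be $C_4$ or $Q_8$ (via \Cref{CyclicSylow} when $p=5$, via $G/A\simeq C_5^2\rtimes Q_8$ when $p=3$), and neither $C_4$ nor $Q_8$ has a section isomorphic to $C_4\times C_2$. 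You work throughout with order-$5$ elements and the group $C_4$, never with the order-$15$ element and $C_4\times C_2$, so this finishing blow is absent. The Frobenius sub-case is likewise left as an assertion (``incompatible with simultaneous rationality''), and your sentence about ``a Frobenius complement that is a $5$-group'' has kernel and complement interchanged --- in your setup $P$ is the kernel and the $\{2,3\}$-group $H$ the complement; the paper settles this sub-case by invoking the classification of rational Frobenius groups rather than reproving it. In short: correct strategy, correct target claim, correct realizability half, but both exclusions that constitute the theorem are left unproved.
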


Therefore, to complete the classification of the GK-graphs of finite solvable \cut (respectively, rational) groups, it remains to answer the following questions.

\begin{mainquestion}\label{QuestionUndecidedCut}
Which of the four graphs in the second line of \Cref{Four} are realizable as the GK-graphs of some finite solvable \cut group?
\end{mainquestion}

\begin{mainquestion}\label{Question325} Is $3-2-5$ the GK-graph of a finite solvable rational group? 
\end{mainquestion}

To the best of our knowledge, the answer to these questions is unknown.\\

As an application of the above theorems we completely classify the graphs realizable as GK-graphs of finite metacyclic, metabelian, supersolvable, metanilpotent and $2$-Frobenius \cut (rational) groups.

\medskip

The GK-graph of a finite group $G$ is also intimately related to the structure of the integral group ring $\mathbb{Z}G$. For example, the augmentation ideal of $\mathbb{Z}G$ is indecomposable as $\mathbb{Z} G$-module if and only if $\GK(G)$ is connected (see \cite{GR75} and \cite[Theorem~6]{Williams}).
Recall that the augmentation of a group ring element is the sum of its coefficients and the augmentation ideal is formed by the elements of augmentation $0$.
If $\V(\ZZ G)$ denotes the group of normalized units (units of augmentation one in $\ZZ G$), then by a classical result of G.~Higman, the  GK-graphs of $G$ and $\V(\ZZ G)$ have the same set of vertices
\cite[Theorem~12]{Higman1940Thesis} (see also \cite{Sandling1981}).
Kimmerle conjectured that, on the level of GK-graphs, $G$ and {the} {unit group of} its integral group ring carry exactly the same information.
Precisely, he asked the following \cite{Kim06}:

	\begin{quote}
\textbf{The Prime Graph Question (PQ)}. For a finite group $G$, do  the GK-graphs of $G$ and $\V(\ZZ G)$ coincide?
	\end{quote}

Kimmerle was led to ask this question as an approximation to the {\emph{First Zassenhaus Conjecture}} (ZC1), which states that every unit of augmentation $1$ in $\ZZ G$ is conjugate to some group element by a unit of $\QQ G$. Though this conjecture has been settled recently in the negative \cite{EiseleMargolis18}, it is still open for \cut groups.
Kimmerle immediately gave an affirmative answer to (PQ) for all finite solvable groups and Frobenius groups \cite{Kim06}. Also for many non-solvable groups, this question has been answered positively in the meantime using a reduction result of Kimmerle and Konovalov to almost simple groups \cite{KimmerleKonovalov2016}. However, in general (PQ) remains open.  
The Prime Graph Question is also a weaker form of one of the main open problems in integral group rings, namely the \emph{{S}pectrum {P}roblem} which asks whether the set of orders of elements in $G$ and of those in $\V(\ZZ G)$ coincide. This is also listed in the collection of problems in Sehgal's book on units in integral group rings \cite[Problem 8]{Sehgal93}. The {S}pectrum {P}roblem is known to have a positive answer e.g. for all solvable groups \cite{Her}.

 We prove the following:

\begin{maintheorem}\label{PrimeGraphCut} Let $G$ be a finite \cut group such that there is no epimorphism $G \to M$, where $M$ denotes the sporadic simple Monster group. Then (PQ) has a positive answer for $G$. 
\end{maintheorem}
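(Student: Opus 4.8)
The plan is to establish (PQ) for finite \cut groups via the standard reduction to almost simple groups due to Kimmerle and Konovalov \cite{KimmerleKonovalov2016}, combined with a classification of which almost simple groups can appear as sections of \cut groups. The reduction result states, roughly, that (PQ) holds for a finite group $G$ provided it holds for every almost simple group $S$ with $\operatorname{soc}(S) \leqslant S \leqslant \Aut(\operatorname{soc}(S))$ that occurs as a section $H/N$ of $G$, where $H \slunlhd G$ and $N$ is the (solvable) radical of $H$; moreover it suffices to verify a slightly stronger "$p$-version" of (PQ) for these almost simple sections. So the first step is to reduce, via this machinery, from the arbitrary finite \cut group $G$ to understanding its almost simple sections.

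The second and central step is to exploit the \cut hypothesis to severely restrict the almost simple groups that can occur. Since \cut is inherited by quotients and by subgroups in the relevant sense (and since sections of $G$ control the element orders through $\GK$), the almost simple sections $S$ must themselves be \cut, or at least have \cut-type restrictions on their element spectra. The key input here is the known classification (by the group-theoretic community, under the name inverse semi-rational groups) of the finite simple \cut groups and their almost simple overgroups. This list is short: I would expect it to consist of groups whose element orders are built only from the small primes, together with a handful of exceptional families, with the sporadic Monster $M$ being the single case for which (PQ) has not yet been verified in the literature. This is exactly why the theorem carries the hypothesis that there is no epimorphism $G \to M$: the Monster is the lone almost simple \cut section for which the requisite $p$-version of (PQ) is not yet known.

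The third step is then to invoke, for each almost simple \cut group $S$ on that finite list other than the Monster, the already-established verification of (PQ) (indeed the stronger $p$-version) — these are recorded in the literature, largely through the computational work following the Kimmerle--Konovalov reduction and the HeLP/partial-augmentation methods. Assembling these pieces: every almost simple section of a \cut group $G$ with no epimorphism onto $M$ lies on the verified list, so the reduction theorem yields (PQ) for $G$ itself. I would organize the write-up as: (i) recall the reduction statement and the $p$-version of (PQ); (ii) prove that any almost simple section of a \cut group is one of the finitely many \cut almost simple groups, citing the classification of (inverse) semi-rational simple groups; (iii) note that (PQ) is verified for all of these except $M$, and conclude.

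The main obstacle I anticipate is step (ii): pinning down precisely which almost simple groups arise as sections of \cut groups and verifying that the \cut property genuinely descends to these sections in the form needed. One must be careful that it is \emph{sections} $H/N$ (not merely subgroups or quotients of $G$) that feed into the reduction, so I would need the fact that the relevant sections inherit enough of the \cut/semi-rationality constraints to force them onto the classified list — and then check that the Monster is the unique omission, justifying its appearance as the sole exceptional hypothesis. The remaining steps are essentially bookkeeping against the literature, so the whole difficulty is concentrated in correctly identifying and excluding the almost simple sections.
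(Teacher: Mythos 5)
Your overall strategy --- reduce to almost simple groups via Kimmerle--Konovalov, use the \cut hypothesis to restrict the candidates, then verify \PQ case by case --- is indeed the paper's strategy, but your formulation of the reduction step contains a genuine error, and it is precisely the point you flag yourself as the ``main obstacle''. The reduction theorem (\Cref{reduction}, i.e.\ \cite[Theorem 1.2]{KimmerleKonovalov2016}) reduces \PQ to the almost simple \emph{epimorphic images} of $G$, not to sections $H/N$ with $H$ subnormal in $G$ and $N$ its solvable radical. This distinction carries the entire proof: the \cut property is inherited by epimorphic images (\Cref{ElementaryDP}), so every almost simple image of a \cut group is again \cut, and the hypothesis that there is no epimorphism $G \to M$ removes exactly the unique almost simple \cut group for which \PQ is still open. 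With your ``sections'' version the argument cannot be closed, because the \cut property does \emph{not} descend to sections: $S_5$ is rational, hence \cut, but its normal subgroup $A_5$ is not \cut; likewise $C_7 \rtimes C_3$ is \cut while its subgroup $C_7$ is not. So almost simple sections of \cut groups need not lie on any list of \cut almost simple groups, and moreover a sections-based reduction would force the hypothesis to be ``$M$ is not a section of $G$'' rather than ``no epimorphism $G \to M$'', which is not the theorem being proved.

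Two further discrepancies are worth noting. First, the classification input is not ``the finite simple \cut groups and their almost simple overgroups'': the socle of an almost simple \cut group need not itself be \cut (again $A_5 \leqslant S_5$). What the paper actually uses is Trefethen's list of possible non-abelian \emph{composition factors} of \cut groups \cite[Theorem 1.1]{Tre19}, which bounds the possible socles; the paper then determines by character-table computations (the function \texttt{IsCutGroup}) which almost simple groups with these socles are \cut. Second, the final step is not pure literature bookkeeping: besides quoting known results for alternating, sporadic and most Lie-type socles (and disposing of the O'Nan almost simple groups by showing they are not \cut), the paper must establish \PQ afresh for $U_6(2)$, $U_6(2).2$ and $G_2(4).2$ via HeLP computations, since these cases were not covered by existing results.
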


As a consequence, we answer (PQ) for rational groups.

\begin{maincorollary}\label{PrimeGraphRational}
	The Prime Graph Question has a positive answer for finite rational groups.
\end{maincorollary}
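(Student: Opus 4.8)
The plan is to deduce \Cref{PrimeGraphRational} directly from \Cref{PrimeGraphCut}. Since every finite rational group is a \cut group (as recalled in the introduction), it suffices to check the only extra hypothesis of \Cref{PrimeGraphCut}, namely that a finite rational group $G$ admits no epimorphism onto the Monster $M$. So the entire argument reduces to showing that $M$ is not a quotient of any finite rational group, after which \Cref{PrimeGraphCut} gives a positive answer to \PQ for every finite rational $G$.

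First I would record that the class of finite rational groups is closed under taking quotients. This is immediate from the character-theoretic definition: for $N \trianglelefteq G$, the irreducible characters of $G/N$ are precisely the inflations of those $\chi \in \Irr(G)$ with $N \subseteq \Ker \chi$, and inflation preserves character values. Hence if every $\chi \in \Irr(G)$ is $\QQ$-valued, then so is every irreducible character of $G/N$, and $G/N$ is again rational. (Equivalently, from the conjugacy description of rationality one lifts a coprime exponent modulo $|gN|$ to a coprime exponent modulo $|g|$ by the Chinese Remainder Theorem.)

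Next, suppose for contradiction that some finite rational group $G$ has an epimorphism $\varphi \colon G \twoheadrightarrow M$. As $M$ is a non-abelian simple group, $M \cong G/\Ker\varphi$ is a quotient of $G$, so by the previous paragraph $M$ would itself be rational. But the Monster is not rational: its character table (ATLAS data) contains irrational entries, equivalently it has pairs of algebraically conjugate, non-rational conjugacy classes — for instance two classes of elements of order $23$ interchanged by a Galois automorphism, so such an element is not conjugate to the relevant coprime power of itself. This contradiction shows that no epimorphism $G \to M$ can exist.

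Combining the two steps, every finite rational group is a \cut group that admits no epimorphism onto $M$, so \Cref{PrimeGraphCut} applies and yields a positive answer to \PQ. The only non-formal ingredient is the non-rationality of the Monster, and I expect that to be the one point requiring an explicit appeal to its character table; the quotient-closure of rational groups and the reduction to \Cref{PrimeGraphCut} are routine.
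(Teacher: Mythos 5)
Your proposal is correct and is essentially the paper's own argument: the paper deduces \Cref{PrimeGraphRational} from \Cref{PrimeGraphCut} in one line, using that epimorphic images of rational groups are rational (\Cref{ElementaryDP}) and that the Monster is not rational by its ATLAS character table, so no rational group surjects onto $M$. Your added detail (the irrational values on the algebraically conjugate classes of order $23$) is a correct instantiation of the ATLAS citation the paper uses.
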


We now outline the contents of the paper. 
In \Cref{Preliminaries} we introduce the basic notation and collect some general properties about \cut groups. 
It concludes by showing a list of solvable \cut groups realizing all the graphs in \Cref{Less4} or the first line of \Cref{Four} (see \Cref{PGroups}). 
This proves the sufficency direction of Theorems \ref{TLess4}, \ref{T4} and \ref{PrimeGraphRat}. 
The proofs of \Cref{TLess4} and \Cref{PrimeGraphRat} are completed in \Cref{SectionTLess4}, while the proof of \Cref{T4} is completed in \Cref{Section4}. 
\Cref{SectionApplications} is dedicated to classify the GK-graphs realizable by groups in several classes of \cut or rational groups. 
Finally, we prove \Cref{PrimeGraphCut} in \Cref{SectionPQ}.

\section{Notation and Preliminaries}\label{Preliminaries}

\subsection{Basic notation}
The cardinality of a set $X$ is denoted by $|X|$ and we use the standard notation $\varphi$, $\gcd$ and $\lcm$ for Euler's totient function, greatest common divisor and least common multiple, respectively. As it is customary, often we are implicitly assuming that $p$ denotes a prime number and hence notation as for example $\prod_{p\mid n}$ means a product where the index $p$ runs over the prime divisors of $n$. 

We denote by $C_n$, a cyclic group of order $n$,
by $S_n$ and $A_n$ the symmetric and alternating groups on $n$ symbols and by $D_n$ and $Q_n$, the dihedral and quaternion groups of order $n$.

 All through, $G$ is a finite group.
 We use the following common notation: $|g|$ denotes the order of $g$, $g^h$ denotes $h^{-1}gh$ and $[g,h]$ denotes the commutator $g^{-1}g^h$ for $g,h\in G$. Moreover, let $\pi(G)$ denote the prime spectrum of $G$, i.e., the set of primes dividing $|G|$ and let $\Aut(G)$ be the group of automorphisms of $G$. The center of $G$ is denoted by $\Z(G)$ and $G'$ denotes the commutator subgroup of $G$. For a subset $X$ of $G$, $\N_G(X)$ and $\C_G(X)$  respectively denote the normalizer and centralizer of $X$ in $G$. For $p\in \pi(G)$, $G_p$ denotes a Sylow $p$-subgroup of $G$. Finally, for a set $\pi$ of primes, a Hall $\pi$-subgroup of $G$ and a Hall $\pi'$-subgroup of $G$ will be denoted by $G_{\pi}$ and $G_{\pi'}$ respectively.
 
By Hall's Theorem (see e.g. \cite[9.1.7]{Robinson}), a solvable group contains a Hall $\pi$-subgroup for each set of primes $\pi$ and any two Hall $\pi$-subgroups are conjugate in $G$, in particular isomorphic, showing that $G_\pi$ is unique up to isomorphism in solvable groups $G$.
A section of $G$ is a group of the form $M/N$ where $M$ is a subgroup of $G$ and $N$ is a normal subgroup of $M$. 
In case $M$ and $N$ are normal in $G$ then we say that $M/N$ is a normal section of $G$. 
Any semi-direct product $G\rtimes H$ is assumed to be not a direct product. Moreover, $\SG{n,m}$ denotes the  $m$-th group or order $n$ in the Small Groups Library of \textsf{GAP} \cite{GAP4}.

If $\Gamma$ is a graph then $v\in \Gamma$ means that $v$ is a vertex of $\Gamma$ and $v-w \in \Gamma$ means that $v$ and $w$ are different vertices of $\Gamma$ joined by an edge.

\subsection{Fitting subgroups and Fitting series} Recall that the \emph{Fitting subgroup $\F(G)$} of the finite group $G$ is the largest nilpotent normal subgroup of $G$ and as such the direct product of the largest normal $p$-subgroups $\O_p(G)$ of $G$.

The \emph{Fitting series} 
\[1 = \F_0(G) \leqslant \F_1(G) \leqslant \F_2(G) \leqslant ... \] of $G$ 
is defined by 
$$\F_0(G) = 1 \qand \frac{\F_{j}(G)}{\F_{j-1}(G)}=\F\left(\frac{G}{F_{j-1}(G)}\right) \text{ for } j\geqslant 1.$$ 
Observe that if $G$ is solvable and $G\ne 1$ then $\F(G)\ne 1$.
Hence, as $G$ is finite it follows that $G$ is solvable if and only if $\F_n(G)=G$ for some $n$. 
The minimal $n$ satisfying this is called the \emph{Fitting length} of $G$, denoted by $\ell_\F(G)$.

The diameter of a graph is the maximum distance between pairs of vertices where the distance between two vertices in a graph is the number of edges in a shortest path connecting them. Of course a non-connected graph has infinite diameter. Lucido proved that the diameter of a connected component of the GK-graph of a solvable group can be at most $3$ \cite[Corollary~2]{LucidoDiameter}. In the extreme situation she obtained tight restrictions on the Fitting structure of the group which will turn out very useful for us.

\begin{proposition}[{Lucido, \cite[Proposition~3]{LucidoTree}}] \label{DiameterFL}
	Let $G$ a finite solvable group such that $\GK(G)$ has diameter $3$. Then either $\ell_\F(G) \leqslant 3$ or $G$ has a normal section isomorphic to $2.S_4$ and $\ell_\F(G)=4$, where the group $2.S_4$, the non-split covering of $S_4$, is given by the following presentation:  
	\begin{equation}\label{Group2S4} 
	2.S_4=\SG{48,28}=\GEN{a,b,x \mid a^8=a^4b^2=x^3=1,\ a^b=a^{-1},\ b^x=a^2,\ a^x=a^3x^{-1}}.
	\end{equation}
\end{proposition}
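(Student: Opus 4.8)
The plan is to translate the hypothesis into local Frobenius data and then track the four primes of a geodesic through the Fitting series. Since $\GK(G)$ has diameter $3$ it is in particular connected, so $G$ is neither Frobenius nor $2$-Frobenius, and there is a geodesic $a-b-c-d$ realizing the diameter. This records the edges $a-b,\ b-c,\ c-d$ (so $G$ has elements of orders $ab$, $bc$ and $cd$) together with the three crucial non-adjacencies $a\nsim c$, $a\nsim d$ and $b\nsim d$. The basic dictionary I would use throughout is: for distinct primes $p,q\in\pi(G)$ one has $p\nsim q$ in $\GK(G)$ if and only if the Hall $\{p,q\}$-subgroup $G_{\{p,q\}}$ (which exists and is unique up to conjugacy by Hall's Theorem) has disconnected GK-graph, hence is Frobenius or $2$-Frobenius by the Gruenberg--Kegel theorem for solvable groups (see \Cref{Frobenius}); equivalently, no nontrivial $p$-element commutes with a nontrivial $q$-element. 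In terms of actions, if a $q$-group acts fixed-point-freely on a normal $p$-group then $p\nsim q$, while a non-fixed-point-free action produces an element of order $pq$ and hence the edge $p-q$.

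The engine of the bound is the self-centralizing property of the Fitting subgroup: for solvable groups $\C_{\overline G}(\F(\overline G))\leqslant \F(\overline G)$ applied to each quotient $\overline G=G/\F_{j-1}(G)$, so that the factor $\F_{j}(G)/\F_{j-1}(G)$ carries a faithful action of $G/\F_j(G)$. I would then follow each of $a,b,c,d$ through the Fitting series $1=\F_0(G)\leqslant\F_1(G)\leqslant\dots$, recording the smallest layer on which its Sylow subgroup acts nontrivially. Using $a\nsim d$ and $b\nsim d$, a Sylow $d$-subgroup must act fixed-point-freely on the $\{a,b\}$-parts of every layer it meets; dually $a$ is constrained by $a\nsim c$ and $a\nsim d$. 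The key extra input is that a Frobenius complement has all its Sylow subgroups cyclic or generalized quaternion, which severely limits how a single prime can spread over several layers: this is what I expect to force the four primes into at most three Fitting layers, giving $\ell_{\F}(G)\leqslant 3$ in the generic situation.

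The one way to escape the bound $3$ is the generalized-quaternion alternative at the prime $2$. When the layer count is pushed to its maximum I would unwind the extremal configuration: a normal quaternion section $Q_8$, an element of order $3$ permuting its generators (so that the relevant section is $\SL(2,3)$), and an outer involution inverting the action, which assembles exactly into the binary octahedral group $2.S_4=\SG{48,28}$ with the presentation displayed in \eqref{Group2S4}. One then checks that this occurs as a genuine normal section of $G$ and that in its presence $\ell_{\F}(G)=4$, the quaternion layer providing precisely one extra step of the Fitting series.

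The main obstacle, as I see it, is the bookkeeping of the middle paragraphs: turning ``geodesic of length $3$'' into a hard bound on the number of Fitting layers requires ruling out both that any prime reappears across too many layers and that the three non-adjacencies $a\nsim c$, $a\nsim d$, $b\nsim d$ get distributed over five or more layers, for which one needs the fixed-point-free action analysis together with the classification of Frobenius complements. Separately, isolating $2.S_4$ as the \emph{unique} obstruction is delicate: it rests on the module structure of the normal $2$-group under the action of its odd-order normalizer (representation theory over $\mathbb{F}_2$, where the quaternion action on the relevant $2$-section is pinned down), and this case analysis is where the genuine work lies.
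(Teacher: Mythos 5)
First, a point of reference: the paper does not prove this proposition at all — it is quoted from Lucido \cite[Proposition~3]{LucidoTree} — so there is no in-paper proof to measure your attempt against; what can be judged is whether your sketch would amount to an independent proof. As it stands, it would not. Your toolkit is the right one (and is essentially Lucido's): non-adjacency of $p,q$ in a finite solvable group means the Hall $\{p,q\}$-subgroup is Frobenius or $2$-Frobenius, Fitting quotients are self-centralizing, and Sylow subgroups of Frobenius complements are cyclic or generalized quaternion. But the two assertions that constitute the proposition — that the non-adjacency pattern along a diameter-$3$ geodesic forces $\ell_\F(G)\leqslant 3$, and that the only escape is a normal section $2.S_4$ with $\ell_\F(G)=4$ — are exactly the steps you defer (``this is what I expect to force\dots'', ``this case analysis is where the genuine work lies''). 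A plan whose two load-bearing claims are announced rather than argued has a genuine gap, not a stylistic one.

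Second, in the one place where you do commit to a concrete mechanism — the assembly of the exceptional group — the description is wrong. You build it from a normal $Q_8$-section, an element of order $3$ (giving $\SL(2,3)$), ``and an outer involution inverting the action''. That group is $\GL(2,3)=\SL(2,3)\rtimes C_2=\SG{48,29}$: the \emph{split} extension, whose Sylow $2$-subgroup is semidihedral. The group in the statement, $2.S_4=\SG{48,28}$, is the non-split extension; its Sylow $2$-subgroup is the generalized quaternion group $Q_{16}$ (visible in the presentation \eqref{Group2S4}, where $\GEN{a,b}\simeq Q_{16}$), its unique involution $a^4$ is central, and consequently it has no outer involutions at all. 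The distinction is not pedantic — it is the crux of the uniqueness claim: $2.S_4$ rather than $\GL(2,3)$ occurs precisely because the $2$-Sylow of the relevant section must be cyclic or generalized quaternion, the very Frobenius-complement constraint you invoke elsewhere; $\GL(2,3)$ fails it. Producing the split group here indicates the extremal analysis has not actually been engaged. (A smaller imprecision: $G/\F_j(G)$ does not in general act faithfully on $\F_j(G)/\F_{j-1}(G)$; self-centralization gives that the kernel of the action of $G/\F_{j-1}(G)$ on this layer is contained in the layer itself, which yields faithfulness of $G/\F_j(G)$ only when the layer is abelian.)
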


\subsection{Frobenius and 2-Frobenius groups}\label{Frobenius}
A finite group $G$ is said to be Frobenius if it contains a non-trivial subgroup $H$ such that $H\cap H^x=1$ for every $x\in G\setminus H$.
The subgroup $H$ is unique up to conjugacy and it is called the \emph{Frobenius complement} of $G$.
Moreover $G$ is Frobenius with complement $H$ if and only if $G=N\rtimes H$ for a normal subgroup $N$ such that $N\cap H=1$ and $\C_G(x)\subseteq N$ for every $x\in N\setminus \{1\}$ (see e.g. \cite[Theorem~11.4.1]{JdR1}).
The subgroup $N$ is unique and it is also called the \emph{Frobenius kernel} of $G$. Moreover the orders of the kernel and the complement are coprime and the order of every element of $G$ divides either the order of the kernel or of the complement. 
By a result of Thompson \cite{Thompson59}, the Frobenius kernel $N$ of a Frobenius group $G$ is nilpotent and hence it follows that $N=\F(G)$, see also \cite[10.5.6]{Robinson}. 
Finite Frobenius \cut groups are completely classified.

\begin{theorem}[{\cite[Theorem~1.3]{Bac}}]\label{FrobeniusCut}
	Every finite Frobenius \cut group is of one of the following forms:
	\begin{enumerate}
		\item $C_3^n\rtimes C_2$, $ C_3^{2n}\rtimes C_4$, $ C_3^{2n}\rtimes Q_8$, 
		$C_5^n\rtimes C_4$, $C_7^n\rtimes C_6$, $C_7^{2n}\rtimes (Q_8\times C_3)$ with $n$ a positive integer, 
		\item $C_5^2\rtimes Q_8$, $C_5^2\rtimes (C_3\rtimes C_4)$, $C_5^2\rtimes \SL(2, 3)$, 
		$C_7^2\rtimes \SL(2,3)$.
		\item $N\rtimes C_3$ with $N$ an extension of two abelian groups of exponent dividing $4$ or a metabelian group of exponent $7$. 
	\end{enumerate} 
	Furthermore, for the  cases in (1) and (2) there is a unique action which makes the group a Frobenius \cut group.
\end{theorem}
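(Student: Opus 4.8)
The plan is to exploit the Frobenius decomposition together with the power-conjugacy description of the \cut property. Write $G=N\rtimes H$ with $N=\F(G)$ the nilpotent Frobenius kernel (nilpotency by Thompson's theorem, as recalled in \Cref{Frobenius}) and $H$ a complement acting fixed-point-freely on $N$. Throughout I will use that $G$ is \cut if and only if for every $g\in G$ and every $k$ coprime to $|g|$ the power $g^k$ is conjugate in $G$ to $g$ or to $g^{-1}$; equivalently, writing $A_g\leqslant(\ZZ/|g|\ZZ)^{\times}$ for the image of $\N_G(\GEN{g})\to\Aut(\GEN{g})$, one needs $A_g\cup(-A_g)=(\ZZ/|g|\ZZ)^{\times}$. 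The first reduction is that in a Frobenius group every element lies in $N$ or in a conjugate of $H$, so that $G$ is \cut if and only if this covering condition holds for the elements of $N$ and for the elements of $H$ separately; in particular $H\cong G/N$ is itself \cut.

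First I would pin down $H$. Being a Frobenius complement, all of its Sylow subgroups are cyclic or, at the prime $2$, generalized quaternion. The non-solvable possibility, which would force $\SL(2,5)$ as a section of the complement, is eliminated by the \cut condition on its elements of order $5$, whose powers are not all conjugate to $g^{\pm1}$; hence $H$, and thus $G$, is solvable, and by \Cref{2357} we may assume $\pi(G)\subseteq\{2,3,5,7\}$. Running through the solvable Frobenius complements under these two constraints should leave exactly $C_2,C_3,C_4,C_6,Q_8,C_3\rtimes C_4,Q_8\times C_3$ and $\SL(2,3)$, matching the complements occurring in the statement.

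Next I would constrain $N$. Since $N$ is nilpotent I may treat its Sylow subgroups one prime at a time, and since $H$ acts fixed-point-freely, for $g\in N$ every automorphism contributing to $A_g$ fixes only the identity. This is the crucial point: on a cyclic $p$-group $C_{p^a}$ with $p$ odd and $a\geqslant2$ the fixed-point-free automorphisms are too few for $A_g\cup(-A_g)$ to fill $(\ZZ/p^a\ZZ)^{\times}$, whereas in the elementary abelian case every nontrivial automorphism is fixed-point-free. A computation of this kind should force the Sylow subgroups of $N$ at $p\in\{3,5,7\}$ to be elementary abelian and its $2$-part to have exponent dividing $4$. Regarding $N$ as a faithful fixed-point-free $\FF_p[H]$-module then restricts both the prime and the dimension: the covering condition $A_g\cup(-A_g)=(\ZZ/p\ZZ)^{\times}$ is automatic for $p=3$, forces the complement to induce the full $(\ZZ/5\ZZ)^{\times}$ for $p=5$, and needs only an order-$3$ action for $p=7$, while fixed-point-freeness pins the module down to a scalar action on $\FF_p^{\,n}$ or $\FF_{p^2}^{\,n}$ up to equivalence, which should yield the uniqueness of the action asserted in cases (1) and (2).

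Finally I would match each admissible complement with its compatible kernels, verifying the \cut condition on the remaining element orders, to read off the families in (1) and (2); the exceptional case (3), where $H=C_3$ (or a situation involving the prime $2$) permits a non-elementary kernel, is handled by translating the \cut condition into the description that $N$ is an extension of two abelian groups of exponent dividing $4$, or a metabelian group of exponent $7$. The hard part will be precisely this kernel analysis: proving rigorously that fixed-point-freeness together with the \cut condition forces the exponent and elementary-abelian restrictions at odd primes, then showing for the $C_3$- and $2$-power cases that no nilpotent kernels arise beyond the stated metabelian/exponent-$4$ shapes, and finally establishing both completeness (that every group in the list is genuinely \cut) and the uniqueness of the action.
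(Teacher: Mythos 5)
First, a point of reference: the paper does not prove this statement at all --- it is imported verbatim from \cite[Theorem~1.3]{Bac}, as the citation in the theorem header indicates. So your attempt can only be judged on its own merits and against the strategy of that reference, whose top-level skeleton (classify the \cut Frobenius complements, then determine the admissible kernels for each) you do reproduce correctly. Your list of possible complements $C_2, C_3, C_4, C_6, Q_8, C_3\rtimes C_4, Q_8\times C_3, \SL(2,3)$ is the right one, although even here the elimination of non-solvable complements is incomplete as written: an element of order $5$ that fails to be inverse semi-rational in a subgroup $\SL(2,5)$ could a priori become inverse semi-rational in the ambient group, so one must argue (e.g.\ via the unique central involution of a Frobenius complement and the resulting need for an element of order $8$) that order-$5$ elements can never be inverse semi-rational in any Frobenius complement.

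The genuine gap is in the kernel analysis. Your ``crucial point'' --- that on a cyclic group $C_{p^a}$ with $p$ odd and $a\geqslant 2$ the fixed-point-free automorphisms are too few for $A_g\cup(-A_g)$ to cover $(\ZZ/p^a\ZZ)^{\times}$ --- is false, and the conclusion it is meant to support (odd Sylow subgroups of $N$ elementary abelian, $2$-part of exponent dividing $4$) contradicts case (3) of the very theorem you are proving: a metabelian group of exponent $7$ need not be abelian, let alone elementary abelian, and an extension of two abelian groups of exponent dividing $4$ need not itself have exponent dividing $4$. Concretely, $\Aut(C_9)\simeq C_6$ contains fixed-point-free elements that generate the whole automorphism group, and already the order-$3$ subgroup $A=\{1,4,7\}$ satisfies $A\cup(-A)=(\ZZ/9\ZZ)^{\times}$, so elements of order $9$ can perfectly well be inverse semi-rational; what excludes them from kernels in cases (1) and (2) is a mechanism your sketch never invokes. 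Namely, the dichotomy in the theorem is governed by the parity of $|H|$: if $|H|$ is even, its unique involution acts fixed-point-freely on $N$, hence as inversion, which forces $N$ to be \emph{abelian}, and then the exponent restrictions follow from the abelian-Sylow argument of \Cref{GpCyclic}; if $|H|$ is odd, then $H=C_3$, and it is Neumann's theorem (a finite group admitting a fixed-point-free automorphism of order $3$ is nilpotent of class at most $2$) that produces the metabelian/extension-of-two-abelians structure of case (3). Without these two classical facts your case distinction has no engine. Relatedly, your assertion that ``every automorphism contributing to $A_g$ fixes only the identity'' is wrong for non-abelian kernels: conjugation by elements of $N$ itself contributes to $A_g$, and this inner contribution is precisely what makes the exceptional kernels of case (3) possible, so an argument that ignores it cannot close the classification.
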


The notation $G = N\rtimes_\Frob K$ indicates that $G$ is a Frobenius group that is determined uniquely up to isomorphism by the isomorphism types of its kernel $N$ and complement $K$.
\medskip

A finite group $G$ is called \emph{$2$-Frobenius} if it contains normal subgroups $N \leqslant K$, with $G/N$ and $K$ Frobenius groups with kernels $K/N$ and $N$, respectively. 
Using Thompson's Theorem it follows that $G$ is $2$-Frobenius if and only if $\F_2(G)$ and $G/\F_1(G)$ are Frobenius. Then $\F_1(G)$ and $G/\F_2(G)$ are called the lower kernel and the upper complement of $G$, respectively. 
Furthermore, $\F_2(G)/\F_1(G)$ is called both the upper kernel and the lower complement of $G$. 
We immediately see that 
$$\gcd([\F_2(G):\F_1(G)],[G:\F_2(G)])=\gcd([\F_2(G):\F_1(G)],|\F_1(G)|)=1.$$
Moreover we will need the following:

\begin{lemma}[{\cite[Lemma 2.1]{GKLNS}}]\label{GKLNS}
	Let $G$ be a finite 2-Frobenius group. Then $G/ \F_2(G)$ is cyclic, $\F_2(G) / \F_1(G)$ is cyclic of odd order, and $\F_1(G)$ is not a cyclic group.
\end{lemma}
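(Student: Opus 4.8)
\textbf{Plan for the proof of Lemma \ref{GKLNS}.}
The statement concerns a finite $2$-Frobenius group $G$ with the standard Fitting data: $\F_1(G)$ is the lower kernel, $\F_2(G)/\F_1(G)$ is the upper kernel (= lower complement), and $G/\F_2(G)$ is the upper complement. Recall that $G$ being $2$-Frobenius means both $\F_2(G)$ and $G/\F_1(G)$ are Frobenius, with kernels $\F_1(G)$ and $\F_2(G)/\F_1(G)$ respectively. The plan is to extract each of the three assertions from the structure of a Frobenius complement, since in each of the two Frobenius groups involved, one of the three pieces appears as a complement or as a quotient by a complement.

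First I would prove that $G/\F_2(G)$ is cyclic. Here $G/\F_1(G)$ is a Frobenius group with kernel $\F_2(G)/\F_1(G)$, so $G/\F_2(G)$ is (isomorphic to) its Frobenius complement. A classical fact about Frobenius complements is that all their Sylow subgroups are cyclic or generalized quaternion, and more to the point, a \emph{solvable} Frobenius complement whose order is coprime to $6$, or one sitting on top of an abelian kernel in the right way, has a cyclic structure; but the sharper input I would invoke is that a Frobenius complement acts fixed-point-freely on the kernel. Since $\F_2(G)/\F_1(G)$ is itself a Frobenius \emph{kernel} (for $G/\F_1(G)$) it is nilpotent by Thompson's theorem, and being also a Frobenius \emph{complement} for the group $\F_2(G)$ (with kernel $\F_1(G)$), it is a nilpotent Frobenius complement, hence cyclic. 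This simultaneously gives the second assertion. Then $G/\F_2(G)$, acting fixed-point-freely by conjugation on the cyclic group $\F_2(G)/\F_1(G)$, embeds into its automorphism group $\Aut(C_m)$, which is abelian; a Frobenius complement that is abelian must be cyclic, giving the first assertion.

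For the oddness of $\F_2(G)/\F_1(G)$, I would use that $\F_2(G)/\F_1(G)$ acts fixed-point-freely (as Frobenius complement) on $\F_1(G)$, while $G/\F_2(G)$ in turn acts fixed-point-freely on $\F_2(G)/\F_1(G)$. A nontrivial fixed-point-free automorphism group of a cyclic group of even order is obstructed: if $2 \mid m = [\F_2(G):\F_1(G)]$, the unique involution in the cyclic group $\F_2(G)/\F_1(G)$ is characteristic and hence fixed by the complementing action of $G/\F_2(G)$, contradicting the fixed-point-freeness of the Frobenius action of the nontrivial group $G/\F_2(G)$. Thus $m$ is odd. Finally, for the claim that $\F_1(G)$ is not cyclic, I would argue by contradiction: if $\F_1(G)$ were cyclic, then $\F_2(G)/\F_1(G)$ would act fixed-point-freely on a cyclic group and hence embed in the abelian group $\Aut(\F_1(G))$, forcing $\F_2(G)$ to be metacyclic with abelian-by-cyclic structure; tracking how $G/\F_2(G)$ must in addition act, one finds the combined complement $G/\F_1(G)$ would have to be abelian and act fixed-point-freely on a cyclic group, which collapses the $2$-Frobenius structure (it would make $G$ merely Frobenius or abelian). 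Pinning down exactly this collapse is the delicate point.

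The main obstacle I anticipate is the last assertion, that $\F_1(G)$ is non-cyclic: the cyclicity of the two upper layers follows fairly mechanically from ``nilpotent Frobenius complements are cyclic'' plus ``fixed-point-free action on a cyclic group is abelian, hence cyclic,'' but ruling out a cyclic lower kernel requires genuinely using that $G$ is a \emph{two-step} Frobenius tower rather than a single Frobenius group. The cleanest route is probably to observe that if $\F_1(G)$ were cyclic then $\F_2(G) = \F_1(G)\rtimes (\F_2(G)/\F_1(G))$ would be metacyclic with both factors cyclic of coprime order, making $\F_2(G)$ itself a metacyclic Frobenius group whose Fitting subgroup is $\F_1(G)$; one then shows that the further fixed-point-free action of the cyclic group $G/\F_2(G)$ cannot be mounted compatibly, because the relevant automorphism groups of cyclic groups are abelian and a Frobenius complement acting on them would have to centralize too much. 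I would make this precise by a careful order/coprimality bookkeeping using the two coprimality relations $\gcd([\F_2(G):\F_1(G)],[G:\F_2(G)])=1$ and $\gcd([\F_2(G):\F_1(G)],|\F_1(G)|)=1$ recorded just above the lemma.
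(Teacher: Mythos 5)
First, note that the paper does not actually prove this lemma: it is quoted verbatim from \cite[Lemma~2.1]{GKLNS}, so there is no internal proof to compare against and your proposal has to stand on its own. It does not, for two reasons. The first gap is your derivation of cyclicity of $\F_2(G)/\F_1(G)$: the claim ``it is a nilpotent Frobenius complement, hence cyclic'' is false. The quaternion group $Q_8$ is a nilpotent Frobenius complement (it acts fixed-point-freely on $C_3^2$; indeed the groups $C_3^{2n}\rtimes Q_8$ of \Cref{FrobeniusCut} are Frobenius), and so is $Q_8\times C_3$; nilpotency of a Frobenius complement only gives that it is a direct product of cyclic Sylow subgroups and possibly one generalized quaternion Sylow $2$-subgroup. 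To exclude the quaternion factor you must prove the \emph{odd order} statement first, but your oddness argument invokes cyclicity (``the unique involution in the cyclic group\dots''), so as written the two steps are circular. The repair is standard but must be made: a Frobenius complement of even order has a unique involution (any involution in a complement inverts the kernel, so the product of two involutions acts trivially and the action is faithful), hence if $\F_2(G)/\F_1(G)$ had even order, its unique involution would be fixed by the non-trivial fixed-point-free action of $G/\F_2(G)$ coming from the Frobenius group $G/\F_1(G)$ --- a contradiction. Only then does nilpotency (Thompson) plus odd order give cyclic Sylow subgroups and hence cyclicity; after that, your argument for the cyclicity of $G/\F_2(G)$ (faithful embedding into the abelian group $\Aut(\F_2(G)/\F_1(G))$, and an abelian Frobenius complement is cyclic) is sound.

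The second and more serious gap is the third assertion, which you explicitly leave open: ``order/coprimality bookkeeping'' with the two coprimality relations will not close it, since those relations are perfectly consistent with a cyclic $\F_1(G)$. The actual argument is a commutator computation. Write $N=\F_1(G)$, $K=\F_2(G)$ and suppose $N$ is cyclic. Then $\Aut(N)$ is abelian, and since $G/\C_G(N)$ embeds into $\Aut(N)$, we get $G'\leqslant \C_G(N)$. On the other hand, $G/N$ is Frobenius with kernel $K/N$, and for a fixed non-trivial element $\bar c$ of a complement the map $\bar k\mapsto[\bar k,\bar c]$ is injective on $K/N$ (a coincidence of values would produce a non-trivial fixed point), hence surjective; so $K/N=[K/N,\GEN{\bar c}]\leqslant (G/N)'=G'N/N$, i.e.\ $K\leqslant G'N\leqslant \C_G(N)$ because $N$ is abelian. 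But $K$ is Frobenius with kernel $N\neq 1$, so $\C_K(n)\leqslant N$ for any $1\neq n\in N$, giving $K=\C_K(N)\leqslant N$ and thus $K/N=1$, contradicting that the upper kernel of a $2$-Frobenius group is a non-trivial Frobenius kernel. This is precisely where the two-step Frobenius structure enters, and it is qualitatively different from the coprimality argument you propose.
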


The following theorem due to Gruenberg and Kegel appeared as Corollary to Theorem A in \cite{Williams}.

\begin{theorem}\label{ConnectedComponents} If $G$ be a finite solvable group, then $\GK(G)$ has at most 2 connected components, and has exactly 2 components, if and only if $G$ is a Frobenius group or a  $2$-Frobenius group.
\end{theorem}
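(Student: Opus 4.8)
The plan is to prove the two directions separately, the forward (structural) direction being the substantial one.

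For the backward direction, suppose first $G = N \rtimes_\Frob H$ is Frobenius. A counting argument shows that the conjugates of $H$ cover $G \setminus N$ exactly: since the complement is self-normalizing there are $[G:H] = |N|$ of them, pairwise intersecting trivially, contributing $|N|(|H|-1) = |G|-|N|$ non-trivial elements, and the remaining $|N|-1$ non-trivial elements lie in $N$. Hence the order of every element of $G$ divides $|N|$ or $|H|$, and as $\gcd(|N|,|H|)=1$ no element has order $pq$ with $p \in \pi(N)$, $q \in \pi(H)$. Thus $\GK(G)$ has no edge between $\pi(N)$ and $\pi(H)$ and is disconnected. For a $2$-Frobenius group one argues analogously inside the two Frobenius sections $\F_2(G)$ and $G/\F_1(G)$, using \Cref{GKLNS} to control the factors, and finds that $\pi(\F_2(G)/\F_1(G))$ is separated from $\pi(\F_1(G)) \cup \pi(G/\F_2(G))$.

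For the forward direction, assume $\GK(G)$ is disconnected and set $F = \F(G) \neq 1$. Since $F$ is nilpotent it is the direct product of its Sylow subgroups, so it contains an element of order $pq$ for any two primes $p,q \mid |F|$; therefore $\pi(F)$ lies inside a single connected component, whose prime set I call $\pi_1$, and I set $\pi_2 = \pi(G) \setminus \pi_1 \neq \emptyset$. The key observation is a fixed-point-freeness statement: if $x$ is a non-trivial $\pi_2$-element and $R \trianglelefteq G$ is any normal $\pi_1$-subgroup, then $\C_R(x) = 1$; indeed a non-trivial fixed point $f$ would commute with $x$ and, as $|x|$ and $|f|$ are coprime, $xf$ would have order divisible by a prime of $\pi_1$ and a prime of $\pi_2$, producing the forbidden edge. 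I then exploit this with Thompson's theorem \cite{Thompson59}: letting $R/\F_1(G) = \O_{\pi_1}(\F_2(G)/\F_1(G))$, a normal $\pi_1$-subgroup of $G$, and $H$ a Hall $\pi_2$-subgroup (Hall's theorem), the fixed-point-free action makes $R \rtimes H$ Frobenius with kernel $R$, so $R$ is nilpotent, whence $R \leqslant \F(G) = \F_1(G)$ and $R = \F_1(G)$. Consequently $\F_2(G)/\F_1(G)$ is a $\pi_2$-group, and since every $\pi_2$-element acts fixed-point-freely on $\F_1(G)$ we obtain $\F_2(G) = \F_1(G) \rtimes_\Frob (\F_2(G)/\F_1(G))$, a Frobenius group with a $\pi_2$-complement. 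If $\F_2(G) = G$ then $G$ itself is Frobenius; otherwise the intended conclusion is that $G/\F_1(G)$ is Frobenius with kernel $\F_2(G)/\F_1(G)$ and with complement the $\pi_1$-group $G/\F_2(G)$, so that $G$ is $2$-Frobenius, and in either case the two prime sets $\pi_1,\pi_2$ are precisely the two components, which simultaneously yields the bound of at most two.

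The main obstacle is this last claim: transferring the fixed-point-free behaviour from $G$ to the quotient $G/\F_1(G)$ so as to show that $G/\F_2(G)$ is a pure $\pi_1$-group and that $G/\F_1(G)$ is Frobenius. The difficulty is that a centralizing relation in $G/\F_1(G)$ need not lift to one in $G$, so the clean order-$pq$ contradiction is unavailable one level up; this is exactly the point where a longer Fitting tower, or the appearance of a third component, must be excluded. I expect to resolve it by a second coprime-action argument applied to $G/\F_1(G)$ together with the structural constraints of \Cref{GKLNS}, supplemented if necessary by the Fitting-length bound of \Cref{DiameterFL}, or alternatively by an induction on $|G|$ applied to a suitable section.
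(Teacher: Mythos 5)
The paper does not actually prove this statement: it is the classical Gruenberg--Kegel theorem, quoted verbatim as a ``Corollary to Theorem A'' of \cite{Williams}, so your attempt has to be measured against the standard argument. Your backward direction is fine (the Frobenius case is complete; the $2$-Frobenius case is only sketched, but the indicated coprimality-and-sections argument does work), and the first half of your forward direction is correct and is exactly how the classical proof begins: the fixed-point-free action of a Hall $\pi_2$-subgroup $H$ on normal $\pi_1$-subgroups, Thompson's theorem applied to $R\rtimes H$ to get $R=\F_1(G)$, and hence that $\F_2(G)/\F_1(G)$ is a $\pi_2$-group and $\F_2(G)$ is Frobenius with kernel $\F_1(G)$.

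Everything after that is a genuine gap, and not only because you leave it as an ``intended conclusion''. First, the dichotomy you plan to complete is false as formulated: it is not true that $\F_2(G)\neq G$ forces $G$ to be $2$-Frobenius with $G/\F_1(G)$ Frobenius over $\F_2(G)/\F_1(G)$ and $G/\F_2(G)$ a $\pi_1$-group. Take $G=C_5^2\rtimes_\Frob(C_3\rtimes C_4)$, the group realizing graph (h) in \Cref{fig:Examples}: here $\F(G)=C_5^2$, $\pi_1=\{5\}$, $\pi_2=\{2,3\}$, and $\F_2(G)=C_5^2\rtimes C_6\subsetneq G$, yet $G$ is Frobenius and not $2$-Frobenius, since $G/\F_1(G)\simeq C_3\rtimes C_4$ has non-trivial center (so is not Frobenius) and $G/\F_2(G)\simeq C_2$ is a $\pi_2$-group. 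The correct case split is on whether $G/\F_1(G)$ is a $\pi_2$-group (then $G=\F_1(G)\rtimes H$ is Frobenius with the \emph{full} Hall $\pi_2$-subgroup as complement, which may properly contain a copy of $\F_2(G)/\F_1(G)$), or not (and only then must one prove that $G/\F_2(G)$ is a $\pi_1$-group acting fixed point freely on $\F_2(G)/\F_1(G)$). Second, the tools you propose for the repair cannot close this: \Cref{GKLNS} presupposes that the group is already known to be $2$-Frobenius, so invoking it is circular, and \Cref{DiameterFL} concerns \emph{connected} GK-graphs of diameter $3$, whereas your graph is disconnected (infinite diameter in the paper's convention). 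What the classical proof actually uses at this point is the structure theory of Frobenius complements: since $H$ acts fixed point freely on $\F_1(G)$, its Sylow subgroups are cyclic or generalized quaternion and all its subgroups of order $pq$ are cyclic. This yields (a) that $\GK(H)$ is complete, hence $\pi_2$ is a single component --- which is precisely your unjustified assertion that ``$\pi_1,\pi_2$ are the two components'', needed for the bound of at most two --- and (b) the exclusion of a longer Fitting tower: in the non-Frobenius case $\F_2(G)/\F_1(G)$ must be cyclic (a quaternion Sylow $2$-subgroup would have its unique involution central in $G/\F_1(G)$, creating a forbidden edge to $\pi_1$), so $G/\F_2(G)$ embeds in the abelian group $\Aut(\F_2(G)/\F_1(G))$, and from there the $2$-Frobenius structure follows. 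This structural input is the real substance of the theorem and is absent from your proposal.
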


\subsection{Rational groups and \cut groups}\label{SubsectionRationalCut}

As mentioned in the introduction, \cut groups, also known as inverse semi-rational groups contain the class of rational groups. 
For the convenience of the reader, we  define basic terminology and include alences used in the article. An element $g \in G$ is said to be \emph{rational in $G$}, if the conjugacy class of $g$ in $G$ contains all the generators of $\GEN{g}$. Moreover, $g$ is \emph{inverse semi-rational} if every generator of $\GEN{g}$ is conjugate in $G$ to $g$ or $g^{-1}$. The group $G$ is called rational if every element of $G$ is rational in $G$ and is called \cut if every element of $G$ is inverse semi-rational in $G$.

For $g\in G$, set $B_G(g)=\N_G(\GEN{g})/\C_G(g)$. The map associating $x\in \N_G(\GEN{g})$ with the automorphism of $\GEN{g}$ mapping $g$ to $g^x$ induces an injective homomorphism $\iota_g \colon B_G(g)\rightarrow \Aut(\GEN{g})$. 
Thus $B_G(g)$ is canonically isomorphic to a subgroup of $\Aut(\langle g \rangle)$. 
Clearly, $g$ is rational in $G$ if and only if $\iota_g$ is surjective; equivalently, if $|B_G(g)|=\varphi(|g|)$.
Moreover, $g$ is inverse semi-rational but not rational in $G$, if and only if the image of $\iota_g$ has index $2$ in $\Aut(\GEN{g})$ and does not contain the inversion map; equivalently, if $|B_G(g)|=\varphi(|g|)/2$ and $g$ and $g^{-1}$ are not conjugate in $G$.
The above equivalences are collected in the following proposition:
\begin{proposition}
The following conditions are equivalent for a finite group $G$:
\begin{enumerate}
 \item $G$ is \cut (respectively, rational).
 \item Every element of $G$ is inverse semi-rational (respectively, rational).
 \item For every $g\in G$ we have that either $|B_G(g)|=\varphi(g)$ or $|B_G(g)|=\frac{\varphi(g)}{2}$ and $g$ and $g^{-1}$ are not conjugate in $G$ (respectively, $|B_G(g)|=\varphi(g)$ for every $g\in G$).
 \item For every irreducible character of $G$, we have $\chi(G)\subseteq F$ for some imaginary quadratic extension $F$ of $\mathbb{Q}$ (respectively, for $F=\mathbb{Q}$).
\end{enumerate}
\end{proposition}

One readily observes that an element of order at most $2$ is rational and every element of order $3$, $4$ or $6$ is inverse semi-rational. {In particular, every group of exponent dividing $4$ or $6$ is \cut.} 
We now collect some more elementary facts about rational and inverse semi-rational elements in the following two lemmas.

\begin{lemma}\label{PrimePower}
	Let $G$ be a finite group and let $g$ be an element of $G$. Suppose that $|g|$ is $p^n$ or $2p^n$ for an odd prime $p$.
	\begin{enumerate}

		\item If $p\equiv 1 \mod 4$ then $g$ is rational in $G$ if and only if $g$ is inverse semi-rational in $G$ if and only if $B_G(g)$ contains an element of order $p^{n-1}(p-1)$.
		\item If $p\equiv -1 \mod 4$ then $g$ is inverse semi-rational (respectively, rational) if and only if $p^{n-1}\frac{p-1}{2} \leqslant |B_G(g)|$ (respectively, $|B_G(g)|=p^{n-1}(p-1)$) if and only if $B_G(g)$ contains an element of order $p^{n-1}\frac{p-1}{2}$ (respectively, of order $p^{n-1}(p-1)$). 
	\end{enumerate}

\end{lemma}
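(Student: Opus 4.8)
The plan is to reduce everything to the arithmetic of the group $A := \Aut(\GEN{g})$. The starting observation is that for both $|g| = p^n$ and $|g| = 2p^n$ one has $A \cong (\ZZ/|g|\ZZ)^\times$ \emph{cyclic} of order $\varphi(|g|) = p^{n-1}(p-1)$; the case $|g| = 2p^n$ uses $\varphi(2p^n) = \varphi(p^n)$ together with $(\ZZ/2p^n\ZZ)^\times \cong (\ZZ/p^n\ZZ)^\times$. Since $\iota_g$ is injective, I identify $B := B_G(g)$ with a subgroup of $A$; being a subgroup of a cyclic group, $B$ is cyclic and $A$ has a unique subgroup of each order dividing $|A|$. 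As $p$ is odd, $|A|$ is even, so the inversion automorphism $\tau \colon g \mapsto g^{-1}$ is the unique element of order $2$ in $A$.

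Next I would record, in the language of $B \leqslant A$, what rationality and inverse semi-rationality mean. Using that two generators $g^i, g^j$ of $\GEN{g}$ are conjugate in $G$ precisely when $ji^{-1} \in B$, one gets: $g$ is rational iff $B = A$; and $g$ is inverse semi-rational iff $A = B \cup \tau B$, i.e.\ iff either $B = A$ or $[A:B] = 2$ with $\tau \notin B$. Consequently $g$ is inverse semi-rational but not rational exactly when $[A:B] = 2$ and $\tau \notin B$, recovering the description preceding the statement. Because $A$ is cyclic, "$B$ contains an element of order $d$" is equivalent to "$d \mid |B|$", which is the bridge to the order conditions in the two items.

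The decisive point, and the only place where the congruence of $p$ modulo $4$ enters, is the position of $\tau$ inside the unique index-$2$ subgroup $A_0$ of $A$. Writing $A = \GEN{a}$ one has $A_0 = \GEN{a^2}$ and $\tau = a^{|A|/2}$, so $\tau \in A_0$ iff $|A|/2 = p^{n-1}(p-1)/2$ is even, i.e.\ iff $4 \mid (p-1)$, i.e.\ iff $p \equiv 1 \bmod 4$. For part (1), when $p \equiv 1 \bmod 4$, any $B$ with $[A:B]=2$ equals $A_0$ and hence contains $\tau$; so the "inverse semi-rational but not rational" case cannot occur, forcing inverse semi-rational $\Leftrightarrow$ rational $\Leftrightarrow$ $B = A$ $\Leftrightarrow$ $B$ contains an element of order $p^{n-1}(p-1)$. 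For part (2), when $p \equiv -1 \bmod 4$, the subgroup $A_0$ never contains $\tau$, so $[A:B]=2$ automatically yields $\tau \notin B$; thus $g$ is inverse semi-rational iff $B = A$ or $B = A_0$, and since no subgroup has order strictly between $|A|/2$ and $|A|$, this is equivalent to $p^{n-1}\frac{p-1}{2} \leqslant |B|$ and to $B$ containing an element of that order, while rationality is the single case $|B| = p^{n-1}(p-1)$.

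The main obstacle is bookkeeping rather than depth: one must verify that $A$ is genuinely cyclic (this is exactly why the odd-prime hypothesis and the admissibility of only the extra factor $2$ are imposed, as for $p=2$ or higher $2$-powers the $2$-part of $A$ need not be cyclic) and must handle the translation between conjugacy of powers of $g$ and membership in $B$ with the correct inverse. Once the cyclicity of $A$ and the exact location of $\tau$ relative to $A_0$ are pinned down, both items follow by comparing subgroup orders.
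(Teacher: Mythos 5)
Your proof is correct and follows essentially the same route as the paper: the paper's (much terser) argument likewise notes that $\Aut(\GEN{g})$ is cyclic of order $p^{n-1}(p-1)$ with inversion as its unique involution, observes that the index-$2$ subgroup contains inversion exactly when $p\equiv 1 \bmod 4$, and then appeals to the characterizations of rational and inverse semi-rational elements in terms of $B_G(g)$ stated just before the lemma. Your write-up simply makes explicit the details (identification of $B_G(g)$ inside the cyclic group, the conjugacy-of-generators translation, and the order bookkeeping) that the paper leaves to that preceding discussion.
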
	

\begin{proof}
The assumption implies that $\Aut(\GEN{g})$ is cyclic of order $p^{n-1}(p-1)$, so that inversion is its unique element of order $2$. Observe that the subgroup of index $2$ in $\Aut(\GEN{g})$ contains the inversion map if and only if $p \equiv 1 \bmod 4$. Both assertions now follow from the discussion preceding this lemma.
\end{proof}

The following lemma collects some elementary facts about rational and \cut groups.

\begin{lemma}\label{ElementaryDP}
	Let $G$ and $H$ be finite groups and let $g\in G$ and $h\in H$.
	\begin{enumerate} 		
		\item\label{RationalTimesRational} If $g$ is rational in $G$ and $h$ is rational in $H$ then $(g,h)$ is rational in $G\times H$.
		\item\label{RationalTimesSR} If $g$ is rational in $G$ and $h$ is inverse semi-rational in $H$ then $(g,h)$ is inverse semi-rational in $G\times H$.	
		\item If $G$ is rational (respectively, \cut) then so is every epimorphic image of $G$.
		\item\label{RationalTimes} $G\times H$ is rational if and only if so are $G$ and $H$.
		\item\label{RationalTimesCut} If $G$ is rational and $H$ is \cut then $G\times H$ is \cut.
%			\item\label{ISDP} Suppose that $g$ is inverse semi-rational but not rational in $G$ and $h$ is inverse semi-rational but not rational in $H$. 
%		Then $(g,h)$ is inverse semi-rational in $G\times H$ if and only if $\gcd(|g|,|h|)\in \{3,4,6\}$.
%		\item\label{CutTimesCut} If $G$ and $H$ are \cut then $G\times H$ is \cut if and only if for every two integers $m$ and $n$ such that $G$ has a non-rational element of order $n$ and $H$ has a non-rational element of order $m$ one has $\gcd(m,n)\in \{3,4,6\}$.\\
%		
%		
	\end{enumerate}
\end{lemma}

%\begin{proof}
%The statements \eqref{RationalTimesRational}-\eqref{RationalTimesCut} follow directly by definitions and \eqref{CutTimesCut} follows from \eqref{ISDP}. So, we only prove \eqref{ISDP}.
%Let $g$ be an inverse semi-rational but not rational element of $G$ and let $h$ be an inverse semi-rational but not rational element of $H$ and let $m=|g|$ and $n=|h|$. Obviously, $(g,h)$ and $(g,h)^{-1}$ are not conjugate in $G\times H$. 
%Thus $(g,h)$ is inverse semi-rational in $G\times H$ if and only if 
%$|B_{G\times H}(g,h)|=\frac{\varphi(|(g,h)|)}{2}$. 
%Moreover, $|(g,h)|=\lcm(m,n)$ and $|B_{G\times H}(g,h)|=|B_{G}(g)\times B_H(h)|=\frac{\varphi(m)\varphi(n)}{4}$. 
%Hence $(g,h)$ is inverse semi-rational in $G\times H$ if and only if $2\varphi(\lcm(m,n))=\varphi(m)\varphi(n)$. Using the well known formula $\varphi(n)=n\prod_{p\mid n} \frac{p-1}{p}$ it follows that $(g,h)$ is inverse semi-rational if and only if $\varphi(\gcd(m,n))=2$ if and only if $\gcd(m,n)\in \{3,4,6\}$. \end{proof}

It is well known that $G$ is rational if and only if every irreducible character of $G$ takes values in $\QQ$ \cite[Problem (2.12)]{Isa06}.
Furthermore, $G$ is \cut if and only if for every irreducible complex character $\chi$ of $G$ there is an imaginary quadratic extension of $\QQ$ containing $\chi(G)$ (\cite{RS}, see also \cite[Corollary~7.1.2 and Corollary~7.1.15]{JdR1}). For future use we include here a GAP function which checks whether a group is \cut, where the input can be either a group or its character table:

\begin{verbatim}
IsCutGroup := function(C)
  return ForAll(Irr(C), chi ->
    Field(chi) = Rationals 
    or 
    Dimension(Field(chi)) = 2 and ImaginaryPart(PrimitiveElement(Field(chi))) <> 0 
  );
end;
\end{verbatim}

\subsection{Solvable \cut groups} Being a \cut group is quite a restrictive property for  a finite solvable group. We collect some facts about solvable \cut groups.

Note that while every prime divides the order of some \cut group, as all symmetric groups are rational, the primes dividing the order of a solvable \cut group are very limited. More precisely, we have the following:

\begin{theorem}[{\cite[Theorem~1.2]{Bac}}]\label{2357} 
	If $G$ is a finite solvable \cut group then $\pi(G)\subseteq \{2,3,5,7\}$.
\end{theorem}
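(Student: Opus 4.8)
The plan is to argue by induction on $|G|$ (equivalently, to study a counterexample of minimal order), using throughout the reformulation of the \cut property from \Cref{SubsectionRationalCut}: an element $g$ is inverse semi-rational exactly when $B_G(g)=\N_G(\GEN{g})/\C_G(g)$, regarded inside $\Aut(\GEN{g})$, has index at most $2$ and, in the index-$2$ case, avoids the inversion. When $g$ has prime order $p$ this just says that $B_G(g)$ is cyclic of order $p-1$ or $(p-1)/2$.

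Suppose the statement fails and let $G$ be a solvable \cut group of minimal order admitting a prime $p\geqslant 11$ in $\pi(G)$. Since a quotient of a \cut group is again a solvable \cut group (\Cref{ElementaryDP}) of smaller order, minimality forces $p\notin\pi(G/N)$ for every $1\neq N\trianglelefteq G$. Taking $N$ a minimal normal subgroup (elementary abelian, say an $r$-group) this gives $r=p$; moreover $P:=N$ is then the unique minimal normal subgroup, equals $\O_p(G)=\F(G)$, is self-centralising ($\C_G(P)=P$, because $\C_G(\F(G))\leqslant \F(G)$ in solvable groups), and $G/P$ is a $p'$-group. By Schur--Zassenhaus we may write $G=P\rtimes H$ with $H\cong G/P$ a \cut group satisfying $\pi(H)\subseteq\{2,3,5,7\}$ by minimality, acting faithfully and irreducibly on the $\FF_p$-space $V:=P$.

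Next I translate the \cut condition at a nonzero vector $g\in V$. Here $P\leqslant\C_G(g)$ and, modulo $P$, one has $B_G(g)\cong\operatorname{Stab}_H(\GEN{g})/\operatorname{Stab}_H(g)$, which embeds into $\Aut(\GEN{g})=\FF_p^\times$ as the group of scalars by which the line-stabiliser acts on $\GEN{g}$. Thus for \emph{every} line the stabiliser in $H$ scales it by a cyclic group of order $d\geqslant (p-1)/2$. If the action of $H$ on $V$ is fixed-point-free, then $G=P\rtimes H$ is a Frobenius group, hence a Frobenius \cut group, and \Cref{FrobeniusCut} applies: every kernel occurring there is a $\{2,3,5,7\}$-group, forcing $p\leqslant 7$, a contradiction. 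In particular the case $\dim_{\FF_p}V=1$ (where $H\leqslant\FF_p^\times$ is cyclic, the action is automatically fixed-point-free, and $G=C_p\rtimes C_d$) is disposed of this way.

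The remaining, and genuinely hard, case is when the faithful irreducible action of $H$ is not fixed-point-free. The main obstacle is that the easy numerical consequences do not suffice: the per-line condition forces only that $p-1$ (or $(p-1)/2$) is a $\{2,3,5,7\}$-number, and since $\varphi(m)$ is a $\{2,3,5,7\}$-number whenever $m$ is, the \cut condition imposed on the large-order elements of $H$ produced by the scalar action is automatically satisfied; infinitely many primes $p$ (for instance $11,13,17,19,29,\dots$) survive all such tests. The real content is therefore a statement about linear groups: a solvable group $H$ with $\pi(H)\subseteq\{2,3,5,7\}$ cannot act faithfully and irreducibly on $V=\FF_p^{k}$ with $p\geqslant 11$ in such a way that every line-stabiliser induces scalars of order at least $(p-1)/2$, unless the action is fixed-point-free. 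To prove this I would reduce the irreducible action to a primitive one (passing through systems of imprimitivity and an extension of scalars) and then invoke the classification of solvable primitive linear groups, weighing the enforced per-line scalar order $\geqslant (p-1)/2$ against the tightly constrained structure of a $\{2,3,5,7\}$-group; this is where the bulk of the work, and the actual dependence on the restricted prime set, resides. A plausible alternative is to route the fixed-point-free reduction together with the rational sub-case $d=p-1$ through the rational analogue (Gow's theorem) and to upgrade it to the semi-rational setting.
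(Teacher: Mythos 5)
The paper itself does not prove \Cref{2357}: it is imported verbatim from \cite[Theorem~1.2]{Bac}, so your attempt has to stand on its own --- and on its own it is not a proof. Your minimal-counterexample reduction is correct and cleanly executed: the minimal normal subgroup $P$ is forced to be the elementary abelian Sylow $p$-subgroup, $\C_G(P)=P$, so $G=P\rtimes H$ with $H$ a solvable \cut $\{2,3,5,7\}$-group acting faithfully and irreducibly on $V=P$, and the \cut condition translates into every line-stabiliser in $H$ inducing on its line a cyclic group of scalars of order at least $(p-1)/2$ (equal to $p-1$ when $p\equiv 1 \bmod 4$, by \Cref{PrimePower}). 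But the case you yourself label ``genuinely hard'' --- faithful, irreducible, not fixed-point-free --- \emph{is} the theorem, and you do not prove it: you only name two strategies (pass to a primitive linear action and invoke the classification of solvable primitive linear groups, or upgrade Gow's theorem from the rational to the inverse semi-rational setting) without executing either. As you correctly observe, no purely numerical consequence of your setup can close the argument, since primes such as $11, 13, 17, 19, 29$ survive all the congruence tests; this is exactly why the published proofs are long (even the weaker spectrum bounds of Chillag and Dolfi \cite{CD} for solvable semi-rational groups require substantial structure theory, and eliminating the residual primes for inverse semi-rational groups is the actual content of \cite[Theorem~1.2]{Bac}). A correct reduction followed by an acknowledged unproven claim is a genuine gap, not a proof.

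A second, smaller defect: your disposal of the fixed-point-free case (and hence of the one-dimensional case) quotes \Cref{FrobeniusCut}, which is Theorem~1.3 of the very paper \cite{Bac} whose Theorem~1.2 you are trying to prove. In \cite{Bac} the classification of Frobenius \cut groups is obtained with the prime-spectrum bound already available, so invoking it here risks circularity. The risk is probably repairable --- Frobenius complements are rigid enough that one can bound the primes of a Frobenius \cut group directly (for instance, $C_{11}\rtimes C_5$ fails to be \cut because its elements of order $5$ are self-normalising, whence $B_G(g)=1$) --- but that verification is again work your proposal does not contain, and it would have to be done without appeal to the statement under proof.
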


Observe that by \Cref{PrimePower}, if $g$ is an inverse semi-rational element of $G$ of order $5$ (respectively $7$) then $B_G(g)$ has an element of order $4$, (respectively $3$). Therefore, if $G$ is $\cut$ and $5\in \pi(G)$ (respectively, $7\in \pi(G)$) then $G$ has an element of order $4$ (respectively 3) and hence $2\in \pi(G)$ (respectively, $3\in \pi(G)$). Combining this with \Cref{2357} we obtain the following:

\begin{lemma}\label{PrimeSpectrum}
	If $G$ is a finite solvable \cut group, then $\pi(G)$ is one of the following sets: $$\{2\},~\{3\},~\{2,3\},~\{2,5\},~\{3,7\},~\{2,3,5\},~\{2,3,7\},~\{2,3,5,7\}.$$       
\end{lemma}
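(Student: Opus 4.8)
The plan is to combine the global restriction of \Cref{2357} with the local divisibility constraints forced by \Cref{PrimePower}, and then to enumerate the admissible subsets of $\{2,3,5,7\}$. By \Cref{2357} we already know $\pi(G)\subseteq\{2,3,5,7\}$, so all that remains is to rule out the subsets in which $5$ occurs without $2$, or $7$ occurs without $3$.

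First I would handle the prime $5$. Suppose $5\in\pi(G)$. By Cauchy's theorem $G$ has an element $g$ of order $5$, and since $G$ is \cut, $g$ is inverse semi-rational in $G$. As $5\equiv 1 \bmod 4$, part (1) of \Cref{PrimePower} applies and shows that $B_G(g)$ contains an element of order $5^{0}(5-1)=4$. Since $B_G(g)=\N_G(\GEN{g})/\C_G(g)$ is a section of $G$, its order divides $|G|$; hence $4\mid |G|$, and in particular $2\in\pi(G)$. The prime $7$ is treated by the identical argument: if $7\in\pi(G)$, choose $g$ of order $7$; now $7\equiv -1\bmod 4$, so part (2) of \Cref{PrimePower} produces an element of order $7^{0}\tfrac{7-1}{2}=3$ in $B_G(g)$, whence $3\mid|G|$ and $3\in\pi(G)$. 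Thus every finite solvable \cut group satisfies the two implications $5\in\pi(G)\Rightarrow 2\in\pi(G)$ and $7\in\pi(G)\Rightarrow 3\in\pi(G)$.

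Finally I would finish by a short enumeration. The admissible choices for the pair of primes $(2,5)$ are $\emptyset$, $\{2\}$ and $\{2,5\}$ (the option $\{5\}$ being forbidden), and independently the admissible choices for $(3,7)$ are $\emptyset$, $\{3\}$ and $\{3,7\}$. Forming all $3\times 3$ unions and discarding $\emptyset$, which corresponds only to the trivial group, yields precisely the eight sets in the statement. I do not anticipate any genuine obstacle: the whole argument is a combination of results already established, and the only point deserving a word of care is the passage from ``$B_G(g)$ contains an element of order $m$'' to ``$m\mid|G|$'', which is immediate once one recalls that $B_G(g)$ is a section of $G$. The remainder is the routine case check above.
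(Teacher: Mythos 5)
Your proposal is correct and follows essentially the same route as the paper: combining \Cref{2357} with \Cref{PrimePower} to obtain the implications $5\in\pi(G)\Rightarrow 2\in\pi(G)$ and $7\in\pi(G)\Rightarrow 3\in\pi(G)$, then enumerating the admissible subsets of $\{2,3,5,7\}$. The only difference is cosmetic: you spell out the passage from an element of order $4$ (respectively $3$) in the section $B_G(g)$ to divisibility of $|G|$, and you organize the final case check as a product of independent choices, both of which the paper leaves implicit.
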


That each of the sets in \Cref{PrimeSpectrum} actually does appear as $\pi(G)$ for some solvable \cut group $G$ follows from the next proposition, that provides examples of solvable \cut groups realizing GK-graphs appearing in Theorems~\ref{TLess4} and \ref{T4}.

\begin{proposition}\label{PGroups}
The graphs in \Cref{fig:Examples} are realized by the solvable \cut groups listed along with them. 
In addition, if the given group realizing the graph is even rational then the graph and the group have
a shaded background.

	\begin{figure}[h]
		\begin{tabular}{|c|}
			\hline
			\emph{(a)} \begin{subfigure}{.15\textwidth}
	\centering
	\begin{tikzpicture}[background rectangle/.style={fill=gray!45}, show background rectangle]
	\draw (0.5,1.5) node{$C_2$};
	\node[label=east:{$2$}] at (0.5,1) (3){};
	\foreach \p in {3}{
		\draw[fill=black] (\p) circle (0.075cm);
	}
	\end{tikzpicture}
\end{subfigure} 
			\hspace{3cm} 
			\emph{(b)} \begin{subfigure}{.15\textwidth}
				\centering
				\begin{tikzpicture}
				\draw (0.5,1.5) node{$C_3$};
					\node[label=east:{$3$}] at (0.5,1) (3){};
					\foreach \p in {3}{
						\draw[fill=black] (\p) circle (0.075cm);
					}
				\end{tikzpicture}
			\end{subfigure} 
			\\ \hline
			\emph{(c)}
			\begin{subfigure}{.18\textwidth}
				\centering
				\begin{tikzpicture}[background rectangle/.style={fill=gray!45}, show background rectangle,minimum height=1.045cm]
				\draw (0.5,1.5) node{$S_3=C_3\rtimes_\Frob C_2$};
					\node[label=west:{$2$}] at (0,1) (2){};
					\node[label=east:{$3$}] at (0.5,1) (3){};
					\foreach \p in {2,3}{
						\draw[fill=black] (\p) circle (0.075cm);
					}
				\end{tikzpicture}
			\end{subfigure}
			\emph{(d)}\hspace{-0.4cm} 
			\begin{subfigure}{.18\textwidth}
				\centering
				\begin{tikzpicture}[background rectangle/.style={fill=gray!45}, show background rectangle,minimum height=1.045cm]
				\draw (0.5,1.5) node{$S_3 \times C_2$};
					\node[label=west:{$2$}] at (0,1) (2){};
					\node[label=east:{$3$}] at (0.5,1) (3){};
					\foreach \p in {2,3}{
						\draw[fill=black] (\p) circle (0.075cm);
					}
					\draw (2)--(3);
				\end{tikzpicture}
			\end{subfigure} 
			\hspace{-0.4cm}\emph{(e)}\hspace{-0.4cm} 
			\begin{subfigure}{.18\textwidth}
				\centering
				\begin{tikzpicture}[background rectangle/.style={fill=gray!45}, show background rectangle]
				\draw (0.5,1.5) node{$C_5^2 \rtimes_\Frob Q_8$};				
					\node[label=west:{$2$}] at (0.5,1) (2){};
					\node[label=west:{$5$}] at (0.5,0.5) (3){};
					\foreach \p in {2,3}{
						\draw[fill=black] (\p) circle (0.075cm);
					}
				\end{tikzpicture}
			\end{subfigure}
			\hspace{-0.4cm}\emph{(f)}\hspace{-0.1cm} 
			\begin{subfigure}{.20\textwidth}
				\centering

				\begin{tikzpicture}[background rectangle/.style={fill=gray!45}, show background rectangle]
				\draw (0.5,1.5) node{$[C_5^2 \rtimes_\Frob Q_8] \times C_2$};								
					\node[label=west:{$2$}] at (0.5,1) (2){};
					\node[label=west:{$5$}] at (0.5,0.5) (5){};
					\foreach \p in {2,5}{
						\draw[fill=black] (\p) circle (0.075cm);
					}
					\draw (2)--(5);
				\end{tikzpicture}
			\end{subfigure}
			\emph{(g)}\hspace{-0.5cm} 
			\begin{subfigure}{.17\textwidth}
				\centering
				\begin{tikzpicture}
				\draw (0.5,1.5) node{$C_7 \rtimes_\Frob C_3$};
					\node[label=east:{$3$}] at (0.5,1) (3){};
					\node[label=east:{$7$}] at (0.5,0.5) (7){};
					\foreach \p in {3,7}{
						\draw[fill=black] (\p) circle (0.075cm);
					}
				\end{tikzpicture}
			\end{subfigure}
			
			\\\hline
			\emph{(h)}\hspace{-0.3cm} 			
			\begin{subfigure}{.2\textwidth}
				\centering

				\begin{tikzpicture}
				\draw (0.5,1) node{$C_5^2 \rtimes_\Frob (C_3 \rtimes C_4)$};				
					\node[label=west:{$2$}] at (0,0.5) (2){};
					\node[label=east:{$3$}] at (0.5,0.5) (3){};
					\node[label=west:{$5$}] at (0,0) (5){};
					\foreach \p in {2,3,5}{
						\draw[fill=black] (\p) circle (0.075cm);
					}
					\draw (2)--(3);
				\end{tikzpicture}
			\end{subfigure}
			\hspace{-0.3cm}\emph{(i)}\hspace{-0.1cm}
			\begin{subfigure}{.22\textwidth}
				\centering
				
				\begin{tikzpicture}
				\draw (0.5,1) node{$[C_5^2 \rtimes_\Frob (C_3 \rtimes C_4)] \times C_2$};								
					\node[label=west:{$2$}] at (0,0.5) (2){};
					\node[label=east:{$3$}] at (0.5,0.5) (3){};
					\node[label=west:{$5$}] at (0,0) (5){};
					\foreach \p in {2,3,5}{
						\draw[fill=black] (\p) circle (0.075cm);
					}
					\draw (2)--(3);
					\draw (2)--(5);
				\end{tikzpicture}
			\end{subfigure}
			\hspace{0.3cm}\emph{(j)}\hspace{-0.4cm}
			\begin{subfigure}{.22\textwidth}
				\centering
				\begin{tikzpicture}
				\draw (0.5,1) node{$[C_5^2 \rtimes_\Frob Q_8] \times C_3$};				
					\node[label=west:{$2$}] at (0,0.5) (2){};
					\node[label=east:{$3$}] at (0.5,0.5) (3){};
					\node[label=west:{$5$}] at (0,0) (5){};
					\foreach \p in {2,3,5}{
						\draw[fill=black] (\p) circle (0.075cm);
					}
					\draw (2)--(3);
					\draw (3)--(5);
				\end{tikzpicture}
			\end{subfigure}
			\hspace{-0.5cm}\emph{(k)}\hspace{-0.2cm}
			\begin{subfigure}{.22\textwidth}
				\centering

				\begin{tikzpicture}[background rectangle/.style={fill=gray!45}, show background rectangle,minimum height=0.45cm]
				\draw (0.5,1) node{$[C_5^2 \rtimes_\Frob Q_8] \times S_3$};								
					\node[label=west:{$2$}] at (0,0.5) (2){};
					\node[label=east:{$3$}] at (0.5,0.5) (3){};
					\node[label=west:{$5$}] at (0,0) (5){};
					\foreach \p in {2,3,5}{
						\draw[fill=black] (\p) circle (0.075cm);
					}
					\draw (2)--(3);
					\draw (2)--(5);
					\draw (3)--(5);
				\end{tikzpicture}
			\end{subfigure}
			\\
			\emph{(l)}\hspace{-0.5cm}
				\begin{subfigure}{.2\textwidth}
				\centering
				\begin{tikzpicture}
				\draw (0.5,1) node{$C_{7} \rtimes_\Frob  C_6$};				
					\node[label=west:{$2$}] at (0,0.5) (2){};
					\node[label=east:{$3$}] at (0.5,0.5) (3){};
					\node[label=east:{$7$}] at (0.5,0) (7){};
					\foreach \p in {2,3,7}{
						\draw[fill=black] (\p) circle (0.075cm);
					}
					\draw (2)--(3);
				\end{tikzpicture}
			\end{subfigure}
			\hspace{-0.2cm}\emph{(m)}\hspace{-0.5cm}
			\begin{subfigure}{.22\textwidth}
				\centering
				\begin{tikzpicture}
				\draw (0.5,1) node{$[C_{7} \rtimes_\Frob  C_6] \times C_2$};								
					\node[label=west:{$2$}] at (0,0.5) (2){};
					\node[label=east:{$3$}] at (0.5,0.5) (3){};
					\node[label=east:{$7$}] at (0.5,0) (7){};
					\foreach \p in {2,3,7}{
						\draw[fill=black] (\p) circle (0.075cm);
					}
					\draw (2)--(3);
					\draw (2)--(7);
				\end{tikzpicture}
			\end{subfigure}
			\hspace{-0.2cm}\emph{(n)}\hspace{-0.5cm}
			\begin{subfigure}{.22\textwidth}
				\centering
				\begin{tikzpicture}
				\draw (0.5,1) node{$[C_{7} \rtimes_\Frob  C_6] \times C_3$};				
					\node[label=west:{$2$}] at (0,0.5) (2){};
					\node[label=east:{$3$}] at (0.5,0.5) (3){};
					\node[label=east:{$7$}] at (0.5,0) (7){};
					\foreach \p in {2,3,7}{
						\draw[fill=black] (\p) circle (0.075cm);
					}
					\draw (2)--(3);
					\draw (3)--(7);
				\end{tikzpicture}
			\end{subfigure}
			\hspace{-0.2cm}\emph{(o)}\hspace{-0.3cm}
			\begin{subfigure}{.2\textwidth}
				\centering
				\begin{tikzpicture}
				\draw (0.5,1) node{$[C_{7} \rtimes_\Frob  C_3] \times S_3$};				
					\node[label=west:{$2$}] at (0,0.5) (2){};
					\node[label=east:{$3$}] at (0.5,0.5) (3){};
					\node[label=east:{$7$}] at (0.5,0) (7){};
					\foreach \p in {2,3,7}{
						\draw[fill=black] (\p) circle (0.075cm);
					}
					\draw (2)--(3);
					\draw (2)--(7);
					\draw (3)--(7);
				\end{tikzpicture}
			\end{subfigure}
			\\\hline
			\hspace{-0.3cm}\emph{(p)}\hspace{-0.3cm}			
			\begin{subfigure}{.28\textwidth}
				\centering
				\begin{tikzpicture}
				\draw (0.5,1) node{$[C_5^2 \rtimes_\Frob Q_8] \times [C_{7} \rtimes_\Frob  C_3]$};				
					\node[label=west:{$2$}] at (0,0.5) (2){};
					\node[label=east:{$3$}] at (0.5, 0.5) (3){};
					\node[label=west:{$5$}] at (0, 0) (5){};
					\node[label=east:{$7$}] at (0.5, 0) (7){};
					\foreach \p in {2,3,5,7}{
						\draw[fill=black] (\p) circle (0.075cm);
					}
					\draw (2)--(3);
					\draw (2)--(7);;
					\draw (3)--(5);
					\draw (5)--(7);
				\end{tikzpicture}
			\end{subfigure}
			\hspace{-0.2cm}\emph{(q)}\hspace{-0.3cm}
			\begin{subfigure}{.28\textwidth}
				\centering
				\begin{tikzpicture}
				\draw (0.5,1) node{$[C_5^2 \rtimes_\Frob Q_8] \times [C_{7} \rtimes_\Frob  C_3] \times C_2$};				
					\node[label=west:{$2$}] at (0,0.5) (2){};
					\node[label=east:{$3$}] at (0.5, 0.5) (3){};
					\node[label=west:{$5$}] at (0, 0) (5){};
					\node[label=east:{$7$}] at (0.5, 0) (7){};
					\foreach \p in {2,3,5,7}{
						\draw[fill=black] (\p) circle (0.075cm);
					}
					\draw (2)--(3);
					\draw (2)--(5);
					\draw (2)--(7);
					\draw (3)--(5);
					\draw (5)--(7);
				\end{tikzpicture}
			\end{subfigure} 
			\hspace{0.4cm}\emph{(r)}\hspace{-0.4cm}
				\begin{subfigure}{.28\textwidth}
					\centering
					\begin{tikzpicture}
				\draw (0.5,1) node{$[C_5^2 \rtimes_\Frob Q_8] \times [C_{7} \rtimes_\Frob  C_6] \times C_3$};					
						\node[label=west:{$2$}] at (0,0.5) (2){};
						\node[label=east:{$3$}] at (0.5, 0.5) (3){};
						\node[label=west:{$5$}] at (0, 0) (5){};
						\node[label=east:{$7$}] at (0.5, 0) (7){};
						\foreach \p in {2,3,5,7}{
							\draw[fill=black] (\p) circle (0.075cm);
						}
						\draw (2)--(3);
						\draw (2)--(5);
						\draw (2)--(7);
						\draw (3)--(5);
						\draw (3)--(7);
						\draw (5)--(7);
					\end{tikzpicture}
				\end{subfigure}
			\\\hline
		\end{tabular}	\caption{\label{fig:Examples} Realization of GK-graphs by solvable \cut\ groups.}
	\end{figure}
\end{proposition}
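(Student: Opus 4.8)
The plan is to regard every group in \Cref{fig:Examples} as assembled from a short list of elementary blocks and to certify the three required properties — solvability, the shape of the GK-graph, and the \cut (respectively rational) property — by propagating them through direct products.

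First I would record the data of the blocks. The groups $C_2$ and $S_3$ are rational (all symmetric groups are), $C_3$ is \cut but not rational with unique non-rational element order $3$, and all three are solvable with obvious GK-graphs (a single vertex $2$; a single vertex $3$; vertices $2,3$ with no edge, since $S_3$ has no element of order $6$). The Frobenius groups $C_5^2\rtimes_\Frob Q_8$, $C_7\rtimes_\Frob C_3$, $C_7\rtimes_\Frob C_6$ and $C_5^2\rtimes_\Frob(C_3\rtimes C_4)$ are solvable and \cut by \Cref{FrobeniusCut}. For their GK-graphs I would invoke the Frobenius structure: the order of every element divides the order of the kernel or of the complement, and since these orders are coprime no prime dividing $|N|$ is joined to a prime dividing $|H|$. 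Reading off the element orders of each complement ($Q_8$ gives $1,2,4$; $C_3$ gives $1,3$; $C_6$ gives $1,2,3,6$; $C_3\rtimes C_4$ gives $1,2,3,4,6$) then yields exactly the graphs (e), (g), (l) and (h): the only internal edge is $2-3$, occurring precisely for the complements $C_6$ and $C_3\rtimes C_4$ that contain an element of order $6$.

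Second, I would assemble the GK-graphs of the remaining (product) groups from the rule that $\GK(G\times H)$ has vertex set $\pi(G)\cup\pi(H)$, retains all edges internal to $\GK(G)$ and $\GK(H)$, and in addition joins every prime of $\pi(G)$ to every distinct prime of $\pi(H)$ (since an element of order $p$ in $G$ and one of order $q$ in $H$ yield an element of order $pq$). Applying this mechanically reproduces (d), (f), (i)--(k), (m)--(o) and (p)--(r); for instance (p) is the join of the two edgeless graphs on $\{2,5\}$ and $\{3,7\}$, producing the four-cycle $2-3-5-7-2$, and (r) is a join of $\{2,5\}$, of $\{2,3,7\}$ (internal edge $2-3$) and of $\{3\}$, which fills in all six edges.

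Finally I would certify the \cut and rational properties using \Cref{ElementaryDP}. The three shaded products (d), (f), (k) are rational by part~\eqref{RationalTimes}, once I establish that $C_5^2\rtimes_\Frob Q_8$ is rational; for the latter I would compute $B_G(g)$ for $g$ of order $5$, using that $\C_G(g)=C_5^2$ and that each of the three cyclic subgroups $\langle i\rangle,\langle j\rangle,\langle k\rangle$ of $Q_8$ is diagonalizable over $\FF_5$ (as $x^2+1$ splits there) and stabilizes two of the six lines of $\FF_5^2$, acting on each as the full $\Aut(C_5)\cong C_4$, whence $|B_G(g)|=\varphi(5)$ and $g$ is rational (the order-$4$ elements are rational already because $i\sim i^{-1}$ inside $Q_8$). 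For the \cut property, every product in which at most one direct factor fails to be rational is \cut by part~\eqref{RationalTimesCut}; this disposes of (i), (j), (m), (o), (p), (q). The only cases where two genuinely non-rational \cut factors are multiplied are (n) and (r), and there I would apply part~\eqref{CutTimesCut}: the non-rational element orders of $C_7\rtimes_\Frob C_6$ are $3$ and $6$ (order $7$ being rational, since $\langle g\rangle=N$ is normal and hence $B_G(g)\cong C_6$), while the only non-rational order of $C_3$ is $3$, and as $\gcd(3,3)=\gcd(6,3)=3\in\{3,4,6\}$ both products are \cut. I expect the main obstacle to be exactly this last point — correctly determining, via \Cref{PrimePower} together with the normalizer/centralizer computation, the full set of non-rational element orders in each cut-but-not-rational Frobenius block so that the gcd condition of part~\eqref{CutTimesCut} applies — and, alongside it, the rationality of $C_5^2\rtimes_\Frob Q_8$; both can alternatively be confirmed with the \texttt{IsCutGroup} routine and an analogous rationality test on the character table.
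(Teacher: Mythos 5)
Your proposal is correct and follows essentially the same route as the paper: reduce to the building blocks $C_2$, $C_3$, $S_3$ and the four Frobenius groups via \Cref{ElementaryDP}, certify that the Frobenius blocks are \cut by \Cref{FrobeniusCut}, and assemble the GK-graphs via the Frobenius-union and direct-product-join rules. The only substantive deviations are that you establish the rationality of $C_5^2\rtimes_\Frob Q_8$ by an explicit eigenline computation over $\FF_5$, whereas the paper deduces it more quickly from the \cut property together with \Cref{PrimePower} (inverse semi-rational elements of order $5$ are automatically rational), and that you spell out the $\gcd$ checks of \Cref{ElementaryDP}\eqref{CutTimesCut} for cases (n) and (r), which the paper leaves implicit in the phrase ``suitable direct products''.
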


\begin{proof}
We have to prove that the group in (a)-(r) are \cut and those in (a), (c), (d), (e), (f) and (k) are rational.
	
It is well known that the groups $C_2$, $S_3$ and $C_5^2\rtimes_{\Frob} Q_8$ are rational. By Lemma~2.7(4), direct product of rational groups are rational and hence so are the groups in (a), (c), (d), (e), (f) and (k).
	
Clearly $C_3$ is \cut and, by Theorem~2.2, so are the groups in (g), (h) and (l).
	Let $G$ be the group in (n) and let $g\in G$. Then either $|g|$ divides $6$ or $|g|\in \{7,3\cdot 7\}$. If $|g|$ divides $6$, then it is clearly inverse semi-rational in $G$. 
	Suppose that $|g|=3\times 7$. Then $g_3$ is central in $G$ and $\GEN{g_7}$ is the unique subgroup of order $7$ of $G$. Moreover $G$ has an element $h$ such that $g_7^h=g_7^3$. Then $g$ is inverse semi-rational in $G$ and $g_7$ is rational in $G$. This shows that $G$ is \cut. 
	
So all the groups in (a), (c), (d), (e), (f) and (k) are rational and the groups in (b), (g), (h), (l) and (n) are \cut. The remaining groups, i.e. those in (i), (j), (m), (o), (p), (q) and (r), are the direct product of one of the latter by one or two of the former and hence they are \cut, by \Cref{ElementaryDP}\eqref{RationalTimes}.
\end{proof}

\section{GK-graphs of solvable \cut groups with at most 3 vertices}\label{SectionTLess4}

In this section, we prove \Cref{TLess4}, i.e.\ we prove which graphs with at most $3$ vertices are GK-graphs of solvable \cut\ groups. We also discuss which of these graphs can be realized as the GK-graph of a solvable rational group and prove \Cref{PrimeGraphRat}.
		
We begin with a result of Higman which states that the number of primes dividing the order of a finite solvable group in which the order of every element is a prime power is at most $2$ \cite{Hig57a}. This result implies the following result, which was independently proved with different methods by Lucido \cite[Proposition~1]{LucidoDiameter} and is known as ``Lucido's Three Primes Lemma''.

\begin{lemma}[Higman, Lucido]\label{Higman3Vertices}
	If $X$ is a set containing $3$ vertices of the GK-graph $\Gamma$ of a finite solvable group, then at least two elements of $X$ are joined by an edge in $\Gamma$. 
\end{lemma}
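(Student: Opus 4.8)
The plan is to argue by contradiction and reduce to Higman's theorem quoted just above the statement. So suppose $G$ is a finite solvable group and that $X=\{p,q,r\}$ is a set of three vertices of $\Gamma = \GK(G)$ no two of which are joined by an edge; I will derive a contradiction. The whole point is to produce, from this hypothesis, a solvable group on the three primes $p,q,r$ in which every element has prime power order, which Higman forbids.

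First I would pass to a Hall $\{p,q,r\}$-subgroup $H$ of $G$, which exists because $G$ is solvable (Hall's Theorem, as recalled in \Cref{Preliminaries}). Since $p,q,r$ are vertices of $\GK(G)$ they each divide $|G|$, and as $H$ realizes the full $\{p,q,r\}$-part of $|G|$, we have $\pi(H)=\{p,q,r\}$; in particular three primes divide $|H|$.

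The crucial step is to show that every element of $H$ has prime power order. The order of any $h\in H$ involves only the primes $p,q,r$. Suppose, toward a contradiction, that some $h\in H$ had order divisible by two of them, say $p$ and $q$, so that $|h|=p^a q^b m$ with $a,b\geqslant 1$ and $\gcd(m,pq)=1$. Then $h^{p^{a-1}q^{b-1}m}$ has order exactly $pq$, producing an element of order $pq$ in $G$ and hence an edge $p-q$ in $\Gamma$, contrary to assumption. The same argument rules out elements of order divisible by $p$ and $r$, or by $q$ and $r$. Hence no element of $H$ has order divisible by two distinct primes, so every element of $H$ has prime power order.

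But then $H$ is a finite solvable group all of whose elements have prime power order while $|\pi(H)|=3$, contradicting Higman's result that such a group has at most two prime divisors. Therefore at least two of the three vertices in $X$ must be joined by an edge, as claimed. I expect the only point needing care to be the passage from ``$G$ has no element of order $pq$'' to ``$G$ has no element of order divisible by both $p$ and $q$'', which is exactly what the power $h^{p^{a-1}q^{b-1}m}$ handles; everything else is a direct invocation of Hall's and Higman's theorems.
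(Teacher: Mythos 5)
Your proof is correct and follows essentially the same route as the paper: the paper derives this lemma from Higman's theorem on solvable groups whose elements all have prime power order, and your passage to a Hall $\{p,q,r\}$-subgroup together with the power trick $h^{p^{a-1}q^{b-1}m}$ is precisely the standard argument that makes the paper's one-line deduction (``This result implies the following'') explicit.
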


	The following lemma will be very useful in this paper and will be used frequently throughout. 

\begin{lemma}\label{6and35}
	Let $\Gamma$ be the GK-graph of a finite \cut group. 
	\begin{enumerate}
		\item If $2-7\in \Gamma$, $3-5 \in \Gamma$, $3-7\in \Gamma$ or $5-7\in \Gamma$, then $2-3\in \Gamma$.
		\item If $5-7\in \Gamma$, then $2-7\in \Gamma$ and $3-5 \in \Gamma$. 
	\end{enumerate}
\end{lemma}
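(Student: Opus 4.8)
The plan is to exploit, for each hypothesis, a single element $g$ whose order is the product $pq$ witnessing the given edge, together with the translation of the \cut hypothesis into information about $B_G(g)$. Writing $n=|g|$ and identifying $\iota_g(B_G(g))$ with a subgroup of $(\ZZ/n\ZZ)^{\times}\cong\Aut(\langle g\rangle)$, the fact that every generator of $\langle g\rangle$ is conjugate to $g$ or $g^{-1}$ means precisely that $\iota_g(B_G(g))$ has index at most $2$ and, in the index-$2$ case, does not contain $-1$. Since the prime-power parts of $\langle g\rangle$ are characteristic, every $x\in\N_G(\langle g\rangle)$ acts on each cyclic Sylow factor, and the homomorphism $x\mapsto(g\mapsto g^x)$ carries the $2$-part (resp.\ $3$-part) of $x$ to the $2$-part (resp.\ $3$-part) of the induced automorphism. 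The recurring mechanism is therefore: locate an automorphism in $\iota_g(B_G(g))$ that is trivial on one Sylow factor and of even order (or order divisible by $3$) on another, lift it to some $x$, and pass to the $2$-part (resp.\ $3$-part) of $x$ to obtain a genuine involution (resp.\ order-$3$ element) of $G$ centralizing the chosen factor; multiplying it by a generator of that factor produces an element of the desired composite order.

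For part (1) I would treat the four edges in turn. If $2-7\in\Gamma$ then $|g|=14$ and $\Aut(\langle g\rangle)\cong C_6$, so the index bound forces an order-$3$ automorphism on the $C_7$-part (equivalently, \Cref{PrimePower}(2) applies); it is automatically trivial on $g^7$, so the $3$-part of a lift centralizes the involution $g^7$ and yields order $6$. If $3-5\in\Gamma$ then $|g|=15$ and $\Aut(\langle g\rangle)\cong C_2\times C_4$; here I would check that each admissible image surjects onto $\Aut(C_5)\cong C_4$, so some $x$ induces an order-$4$ automorphism on $g^3$ whose square is an involution trivial on $g^5$, and the $2$-part of $x$ supplies an involution centralizing the order-$3$ element $g^5$, giving order $6$. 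If $3-7\in\Gamma$ then $|g|=21$ and $\Aut(\langle g\rangle)\cong C_2\times C_6$; each admissible (hence cyclic, order $6$ or $12$) image contains an involution trivial on exactly one factor, so a lift produces an involution centralizing either $g^7$ (order $3$, giving order $6$ directly) or $g^3$ (order $7$, giving order $14$, whence one reduces to the already-settled case $2-7\Rightarrow 2-3$). The edge $5-7$ will then follow from part (2) combined with $3-5\Rightarrow 2-3$.

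For part (2), assume $5-7\in\Gamma$, so $|g|=35$ and $\Aut(\langle g\rangle)\cong C_4\times C_6$. For the edge $3-5$ the key point is that $\Aut(C_5)\cong C_4$ has no element of order $3$, whence the $3$-part of every automorphism of $\langle g\rangle$ is trivial on the order-$5$ element $g^7$; choosing $x$ whose induced automorphism has order divisible by $3$ (present because each admissible image contains the order-$3$ automorphism of the $C_7$-part), its $3$-part is a nontrivial $3$-element centralizing $g^7$, producing order $15$. For the edge $2-7$ I would verify that each admissible image meets $\Aut(C_5)\cong C_4$ in an even-order subgroup; such an automorphism is trivial on the $C_7$-part, so the $2$-part of a lift is an involution centralizing $g^5$ (order $7$), producing order $14$.

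The main obstacle is the $3-7$ case of part (1): in contrast with $2-7$ and $3-5$, there is a genuinely admissible pattern for $|g|=21$ — the diagonal $C_6\leqslant C_2\times C_6$ that inverts the order-$3$ part and acts with order $3$ on the order-$7$ part — in which \emph{no} even-order automorphism centralizes the order-$3$ element, so order $6$ cannot be manufactured directly; the proof must instead extract an involution centralizing the order-$7$ element and appeal back to the implication $2-7\Rightarrow 2-3$. The only other care needed throughout is the routine but essential bookkeeping that replaces an automorphism of prescribed $2$- or $3$-part order by an actual element of $G$ of that order via $p$-parts, and the small finite check, in the $3-5$, $3-7$ and $5-7$ cases, that the admissible subgroups of the (non-cyclic) automorphism group have the claimed projections onto the relevant $\Aut(C_p)$.
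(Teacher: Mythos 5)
Your proposal is correct, and its engine --- viewing $\iota_g(B_G(g))$ as a subgroup of index at most $2$ in $\Aut(\GEN{g})$ that misses inversion, lifting a suitable automorphism to $\N_G(\GEN{g})$, and passing to the $2$- or $3$-part of the lift --- is exactly the paper's engine for the edges $2-7$ and $3-5$ and for all of part (2). The genuine divergence is in the edges $3-7$ and $5-7$ of part (1). There the paper avoids all case analysis with a purely numerical observation: since $g$ is inverse semi-rational, $|B_G(g)|\geqslant \varphi(|g|)/2$, which is a multiple of $6$ when $|g|\in\{21,35\}$; as $B_G(g)$ embeds into the abelian group $\Aut(\GEN{g})$ it contains an element of order $6$, and any preimage of such an element in $\N_G(\GEN{g})$ has order divisible by $6$, giving an element of order $6$ in $G$ at once. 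You instead enumerate the admissible index-$\leqslant 2$ subgroups of $C_2\times C_6$, observe that the diagonal one only yields an involution centralizing the $7$-part, hence an element of order $14$, and then invoke the previously settled implication $2-7\Rightarrow 2-3$; likewise you obtain the $5-7$ case of (1) from part (2) together with $3-5\Rightarrow 2-3$. This is sound --- the logical ordering you need is available, since your proof of (2) does not use (1) --- but it costs an explicit subgroup enumeration and a reduction chain that the paper's uniform argument makes unnecessary; in exchange, your enumeration exposes \emph{why} the diagonal subgroup is the delicate configuration, which the paper's counting argument hides.

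Two local blemishes, neither fatal. First, in the $3-5$ case it is not the $2$-part of $x$ that centralizes $g^5$: if the automorphism induced by $x$ is $(u,v)$ (inverting the $3$-part), its $2$-part is itself and moves $g^5$. You want the $2$-part of $x^2$, or equivalently a lift of $\phi^2$ --- which lies in the image automatically, since squares lie in any subgroup of index at most $2$; this is precisely the trick the paper uses, and it is also what your own general mechanism (``locate an automorphism \emph{in the image} trivial on one factor'') prescribes, so the fix is cosmetic. Second, in the $3-7$ case the parenthetical ``hence cyclic, order $6$ or $12$'' is wrong when $g$ is rational, since then the image is the full non-cyclic group $C_2\times C_6$; this does not affect the argument, as the full group contains the involution trivial on the $3$-part and so gives order $6$ directly.
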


\begin{proof} Let $G$ be a \cut group and let $g\in G$. 
	
(1)	If $|g|=2\cdot 7$ then $B_G(g)$ has an element of order $\varphi(|g|)/2=3$, since $g$ is inverse semi-rational in $G$ and hence $G$ contains an element of order $3$  commuting with the $2$-part of $g$. Thus $G$ contains an element of order $2\cdot 3$.

If $|g|=3\cdot 5$ then $\Aut(\GEN{g})$ has an element $\alpha$ of order $4$ acting as the identity on the $3$-part of $g$. Then $\alpha^2$ is an element of order $2$ in the image of $\iota_g$. 
This implies that $G$ has a $2$-element commuting with the $3$-part of $g$ and hence $G$ has an element of order $6$.

If $|g|$ is either $3\cdot 7$ or $5\cdot 7$ then $|B_G(g)|$ is a multiple of $6$ because $\frac{\varphi(21)}{2}=6$ and $\frac{\varphi(35)}{2}=12$. As $B_G(g)$ is abelian we deduce that $G$ contains an element of order $6$. 
	
(2) If $|g|=5\cdot 7$ then $\Aut(\GEN{g})$ has an element $\alpha$ of order $4$ commuting with the $7$-part of $g$ and an element $\beta$ of order $6$ commuting with the $5$-part of $g$. As $\alpha^2$ and $\beta^2$ belong to the image of $\iota_g$, we deduce that $G$ contains a $2$-element commuting with an element of order $7$ and a $3$-element commuting with the $5$-part of $G$. Thus $G$ contains an element of order $2\cdot 7$ and an element of order $3\cdot 5$. \qedhere
\end{proof}

\noindent\textbf{\textit{Proof of \Cref{TLess4}.} } The sufficiency part of \Cref{TLess4} follows from \Cref{PGroups}. Let $G$ be a solvable \cut group such that its GK-graph $\Gamma $ has at most $3$ vertices. The possible sets of vertices are described in Lemma~\ref{PrimeSpectrum}. Because of \Cref{Higman3Vertices}, the graphs with three vertices and no edges are excluded. By Lemma~\ref{6and35}, it only remains to prove that $\Gamma\ne (2-5\hspace{.4cm}3)$. Now, if $\Gamma = (2-5\hspace{.4cm}3)$, then in view of \Cref{ConnectedComponents}, $G$ is either a Frobenius or a $2$-Frobenius group. Inspecting the list of Frobenius \cut~groups in Theorem~\ref{FrobeniusCut}, we observe that $G$ cannot be Frobenius. Hence, $G$ is $2$-Frobenius implying that $\F_2(G)$ is Frobenius and $G/\F_1(G)$ is Frobenius as well as \cut. Moreover, the upper kernel $\F_2(G)/\F_1(G)$ is cyclic of odd order and the upper complement $G/\F_2(G)$ is cyclic by Lemma~\ref{GKLNS}. Then, $G/\F_1(G)$ is either $S_3$ or $C_5\rtimes C_4$ by Theorem~\ref{FrobeniusCut}. However,  the latter is not compatible with the assumption that $G$ has elements of order $2\cdot 5$. Thus $G/\F_1(G)\simeq S_3$ and hence $\F_1(G)$ has an element $g$ of order $5$. 
By \Cref{PrimePower}, $g$ is rational and hence $G$ has an element $h$ of order $4$ such that $[g,h^2]\ne 1$. However, as $G/\F_1(G)\simeq S_3$ it follows that $h^2\in \F_1(G)$. As $\F_1(G)$ is nilpotent we have $[g,h^2]=1$, a contradiction.  \hfill  $\Box$\\

In case $G$ is a solvable rational group then $\pi(G)\subseteq \{2,3,5\}$ by a classical result of Gow \cite{Gow} and $\GK(G)$ is one of those in Figure~\ref{Less4}. 
In the remainder of the section, we prove \Cref{PrimeGraphRat}. 
We first need some lemmas.

 \begin{lemma}	\label{MANS}
	Let $\mathcal{G}$ be a family of finite solvable groups, which is closed under epimorphic images and suppose that the GK-graph of any group  $G\in \mathcal{G}$ with $m$ vertices has at least $k$ edges. If $G$ is a group of minimal order among those in $\mathcal{G}$ with $m$ vertices in its GK-graph and $A$ is a minimal normal subgroup of $G$, then $A$ is an elementary abelian Sylow subgroup of $G$. 
\end{lemma}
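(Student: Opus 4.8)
The plan is to argue by minimality, passing to the quotient $G/A$. Since $G$ is solvable, any minimal normal subgroup $A$ is characteristically simple and solvable, hence an elementary abelian $p$-group for some prime $p$; so that part of the conclusion is automatic and the only genuine content is that $A$ is a \emph{Sylow} $p$-subgroup of $G$. I would write $|A| = p^a$ and let $m = |\pi(G)|$ be the number of vertices of $\GK(G)$, recalling that by Cauchy's theorem the vertex set of $\GK(G)$ is exactly $\pi(G)$.

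First I would observe that, since $\mathcal{G}$ is closed under epimorphic images, the quotient $G/A$ again lies in $\mathcal{G}$, and clearly $|G/A| = |G|/p^a < |G|$. Because $G$ was chosen of minimal order among the groups of $\mathcal{G}$ whose GK-graph has $m$ vertices, the strictly smaller group $G/A$ cannot itself have $m$ vertices; that is, $|\pi(G/A)| < m = |\pi(G)|$. Now $\pi(G/A) \subseteq \pi(G)$, and since $A$ is a $p$-group the integers $|G|$ and $|G/A|$ have exactly the same prime divisors apart from possibly $p$. Hence the only prime that can be absent from $\pi(G/A)$ is $p$, and as at least one prime is indeed absent we must have $\pi(G/A) = \pi(G) \setminus \{p\}$. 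In particular $p \nmid |G/A| = |G|/p^a$, so the full $p$-part of $|G|$ equals $p^a = |A|$. Therefore $A$ is a $p$-subgroup whose order is the $p$-part of $|G|$, i.e.\ $A$ is a (necessarily normal, since it is a minimal normal subgroup) Sylow $p$-subgroup of $G$, which completes the proof.

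The step requiring the most care is the clean extraction of ``only $p$ can drop out of the prime spectrum'': this is precisely what upgrades the mere inequality $|\pi(G/A)| < m$ into the sharp statement that $A$ carries the entire $p$-part of $|G|$. It is worth emphasising that this is the only place minimality is used, and that the argument rests solely on the closure of $\mathcal{G}$ under epimorphic images together with the choice of $G$ as a group of minimal order with $m$ vertices; the hypothesis on the number $k$ of edges plays no role in this particular reduction and is recorded only because it is the form in which the lemma will subsequently be applied. Finally I would check the degenerate case $A = G$, which occurs exactly when $G$ is elementary abelian and $m = 1$: there $G/A$ is trivial with $0 < m$ vertices and $A = G$ is trivially its own Sylow subgroup, so the conclusion still holds.
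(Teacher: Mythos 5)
Your proof is correct and is essentially the paper's own argument: solvability makes $A$ an elementary abelian $p$-group, closure under epimorphic images together with the minimality of $|G|$ forces $\GK(G/A)$ to have fewer vertices than $\GK(G)$, and since $A$ is a $p$-group the only prime that can disappear from the spectrum is $p$, whence $A$ is a Sylow $p$-subgroup. Your side remark that the hypothesis on the number $k$ of edges is never used is also consistent with the paper, whose proof likewise relies only on closure under quotients and minimality.
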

\begin{proof}
	As $G$ is solvable, $A$ is an elementary abelian $p$-group and by the minimality assumption on $G$, $\GK(G/A)$ is properly contained in $\GK(G)$. As $\mathcal{G}$ is closed under epimorphic images, $\GK(G/A)$ has fewer vertices than $\GK(G)$ so that $A$ is a Sylow $p$-subgroup of $G$. 
\end{proof}

\begin{lemma}\label{GpCyclic}
 If $G$ is a finite \cut group and $G_p$ is an abelian Sylow $p$-subgroup of $G$ then the exponent of $G_p$ divides $p$ or $4$.
\end{lemma}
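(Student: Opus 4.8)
The plan is to fix an arbitrary element $g\in G_p$ and bound its order: it suffices to show that $|g|$ divides $p$ when $p$ is odd and divides $4$ when $p=2$, since then every element of the $p$-group $G_p$ has order dividing $p$ (respectively $4$) and hence so does $\exp(G_p)$. Write $P=G_p$ and $|g|=p^n$. The decisive structural input is that $P$ is abelian: since $g\in P$ and $P$ is abelian we get $P\subseteq\C_G(g)$, and as $P$ is a full Sylow $p$-subgroup of $G$ this forces $p\nmid[G:\C_G(g)]$. Consequently $|B_G(g)|=[\N_G(\GEN{g}):\C_G(g)]$ divides $[G:\C_G(g)]$ and is therefore coprime to $p$.

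Next I would bring in the \cut hypothesis through the embedding $\iota_g\colon B_G(g)\hookrightarrow\Aut(\GEN{g})$ described just before \Cref{PrimePower}. The stabilizer of $g$ under the resulting action of $B_G(g)$ on the generators of $\GEN{g}$ is trivial (if $g^x=g$ then $x\in\C_G(g)$), so the $B_G(g)$-orbit of $g$ has size exactly $|B_G(g)|$, and likewise for $g^{-1}$. Since $G$ is \cut, $g$ is inverse semi-rational, so every generator of $\GEN{g}$ is $G$-conjugate to $g$ or to $g^{-1}$; because such a conjugating element normalizes $\GEN{g}$ (it sends a generator to a generator), each generator lies in the $B_G(g)$-orbit of $g$ or of $g^{-1}$. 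These two orbits thus cover all $\varphi(p^n)$ generators, giving the inequality $p^{n-1}(p-1)=\varphi(p^n)\le 2\,|B_G(g)|$.

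It then remains to combine coprimality with the structure of $\Aut(\GEN{g})$. For odd $p$ the group $\Aut(\GEN{g})$ is cyclic of order $p^{n-1}(p-1)$, so its $p'$-part has order $p-1$ and coprimality yields $|B_G(g)|\mid(p-1)$; the inequality becomes $p^{n-1}(p-1)\le 2(p-1)$, i.e.\ $p^{n-1}\le 2$, forcing $n=1$. (Alternatively one may quote \Cref{PrimePower} directly: for $n\ge 2$ it would force $p\mid|B_G(g)|$, contradicting coprimality.) For $p=2$ and $n\ge 2$ the group $\Aut(\GEN{g})$ is a $2$-group, so coprimality forces $|B_G(g)|=1$, and the inequality reads $2^{n-1}\le 2$, i.e.\ $n\le 2$. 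In both cases the desired bound on $|g|$ follows.

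I expect the only genuinely delicate point to be the orbit-covering step: one must verify that a generator of $\GEN{g}$ which is $G$-conjugate to $g$ is in fact conjugate to $g$ by an element of $\N_G(\GEN{g})$, so that it genuinely lies in the $B_G(g)$-orbit of $g$ and contributes to the count. This is precisely where conjugation is pinned down to take place inside the cyclic group $\GEN{g}$ rather than in $G$ at large. Everything else—the coprimality of $|B_G(g)|$ and the two elementary inequalities—is routine once this is in place.
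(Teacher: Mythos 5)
Your proof is correct and follows essentially the same route as the paper's: both arguments rest on the observation that $G_p\subseteq \C_G(g)$ forces $|B_G(g)|$ to be coprime to $p$, while inverse semi-rationality forces $|B_G(g)|\geqslant \varphi(|g|)/2$, which is divisible by $p$ once the order exceeds $p$ (respectively $4$). The only cosmetic difference is that you re-derive the bound $\varphi(|g|)\leqslant 2|B_G(g)|$ by orbit counting, whereas the paper simply quotes it from the characterization of inverse semi-rational elements given in its preliminaries.
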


\begin{proof}
	Let $x$ be an element of $G_p$ of order $p^\alpha$. 
	As $x$ is inverse semi-rational, $|B_G(x)|$ is either $p^{\alpha-1}(p-1)$ or $\frac{p^{\alpha-1}(p-1)}{2}$. Therefore, if $p\neq 2$ and $\alpha >1$ or if $p=2$ and $\alpha >2$, then $p|[\N_G(\GEN{x}):\C_G(x)]$, which is not possible, as $G_p\subseteq \C_G(x)$. Thus if $p\ne 2$ then $\alpha\leqslant 1$ and if $p=2$ then $\alpha\leqslant 2$.
\end{proof}

\begin{lemma}\label{CyclicSylow} 
	Let $G$ be a finite \cut group and let $G_p$ and $G_q$ be Sylow subgroups of $G$ for two distinct primes $p$ and $q$ dividing the order of $G$. Suppose that $G_p$ is normal in $G$ and $G$ does not contain an element of order $pq$. Then $G_q$ is either the quaternion group of order $8$ or a cyclic group of order dividing $4$ or $q$.
\end{lemma}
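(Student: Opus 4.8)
The plan is to realise $G_q$ as a Frobenius complement and then use the \cut property to cut down the possibilities. Since $G_p\trianglelefteq G$ and $\gcd(|G_p|,|G_q|)=1$, the product $H:=G_pG_q$ is a subgroup equal to $G_p\rtimes G_q$. I claim $H$ is Frobenius with kernel $G_p$ and complement $G_q$: if some $1\neq y\in G_q$ centralised some $1\neq x\in G_p$, then $xy=yx$ would have order divisible by $pq$, and a suitable power of it would be an element of $G$ of order $pq$, contradicting the hypothesis. Thus $G_q$ acts fixed-point-freely on $G_p$, so $H$ is Frobenius with the asserted kernel and complement (cf.\ the characterisation recalled in \Cref{Frobenius}). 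Invoking the classical structure theory of Frobenius complements, every Sylow subgroup of $G_q$ is cyclic or generalized quaternion; as $G_q$ is a $q$-group, this forces $G_q$ to be cyclic when $q$ is odd, and cyclic or generalized quaternion when $q=2$.

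In the cyclic case I am essentially done: $G_q$ is abelian, so \Cref{GpCyclic} bounds its exponent---hence its order---by a divisor of $q$ or $4$, which is exactly the cyclic alternative in the statement. It remains to treat $G_q=Q_{2^n}$ with $q=2$ and $n\geqslant 3$, and to show that necessarily $n=3$. Here I would single out a generator $a$ of the maximal cyclic subgroup $\GEN{a}$ of $Q_{2^n}$, of order $2^{n-1}$, and an element $b\in G_q\setminus\GEN{a}$, so that $a^b=a^{-1}$. Since $|a|=2^{n-1}>2$ we have $a\neq a^{-1}$, hence inversion lies in the image of $\iota_a$ in $\Aut(\GEN{a})$. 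As $a$ is inverse semi-rational (because $G$ is \cut), and a non-rational inverse semi-rational element has, by definition, an image avoiding inversion, I conclude that $a$ must in fact be \emph{rational}; equivalently $B_G(a)\cong\Aut(\GEN{a})=\Aut(C_{2^{n-1}})$, a $2$-group of order $2^{n-2}$.

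The crux---and the step I expect to be the main obstacle---is to turn this into the bound $n\leqslant 3$. The idea is that, although $a$ is rational in $G$, the Sylow $2$-subgroup $G_q$ can only realise the single nontrivial automorphism given by inversion. Concretely, $\GEN{a}\trianglelefteq G_q$ gives $G_q\leqslant\N_G(\GEN{a})$, and since $G_q$ is a Sylow $2$-subgroup of $G$ it is a Sylow $2$-subgroup of $\N_G(\GEN{a})$. The quotient map $\N_G(\GEN{a})\to B_G(a)$ therefore carries $G_q$ onto a Sylow $2$-subgroup of $B_G(a)$; but $B_G(a)$ is itself a $2$-group, so this image is all of $B_G(a)$. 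On the other hand the image of $G_q$ equals $G_q/\C_{G_q}(a)=Q_{2^n}/\GEN{a}\cong C_2$, because $\C_{Q_{2^n}}(a)=\GEN{a}$. Comparing orders yields $2^{n-2}=|B_G(a)|=2$, whence $n=3$ and $G_q=Q_8$. The only genuinely delicate points are the Frobenius-complement reduction (a classical input rather than something I would reprove) and the observation that the presence of inversion forces rationality; the final Sylow-image comparison is then a short order count.
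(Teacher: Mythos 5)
Your proof is correct and follows essentially the same route as the paper's: form the Frobenius group $G_pG_q$ to force $G_q$ cyclic or generalized quaternion, handle the cyclic case via \Cref{GpCyclic}, and in the quaternion case use that $G_q$ induces inversion on its maximal cyclic subgroup $\GEN{a}$, so inverse semi-rationality forces $a$ to be rational, while the Sylow-image count gives $|B_G(a)|=2$, yielding $Q_8$. The paper compresses the last step into the observation that $B_G(a)$ has order not divisible by $4$; your write-up just makes that Sylow comparison explicit.
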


\begin{proof} Set $F = G_pG_q$. Since there is no element of order $pq$ in $G$, $G_q$ acts fixed point-freely by conjugation on $G_p$. 
Hence, $F$ is a Frobenius group with Frobenius kernel $G_p$ and Frobenius complement $G_q$. From the known structure theory of Frobenius complements it follows that $G_q$ is cyclic or generalized quaternion, see \cite[10.5.6]{Robinson}.
	
By \Cref{GpCyclic}, if $G_q$ is cyclic then its order is a divisor of $4$ or $q$. Finally, if $G_2$ is generalized quaternion of order $2^f$, then $G_2$ contains a normal cyclic subgroup of order $2^{f-1}$ generated by $y$, say. $G_2$ induces on $\langle y \rangle$ the inversion automorphism, $B_G(y)$ has order not divisible by $4$ and since $y$ is inverse semi-rational in $G$ it follows that $f - 1 \leqslant 2$ and hence $G_2$ is the quaternion group of order $8$. \end{proof} 
 
We are ready for the proof of \Cref{PrimeGraphRat}, describing GK-graphs of solvable rational groups.
\medskip

\noindent
\textbf{\textit{Proof of \Cref{PrimeGraphRat}.}} As stated above, for a finite solvable rational group $G$, we have $\pi(G)\subseteq \{2,3,5\}$ \cite{Gow}. Moreover, if $G\ne 1$ then $2\in \pi(G)$. So, if $\GK(G)$ has at most $2$ vertices, then in view of the restrictions on the prime spectra of $G$, the only possibilities of GK-graphs of $G$ are (a), (c)-(f) and by \Cref{PGroups}, these GK-graphs are indeed realized by solvable rational groups. 
	
Now, consider the case when $\pi(G)=\{2,3,5\}$. By \Cref{PGroups}, the complete graph on the vertices $\{2,3,5\}$ is the GK-graph of a solvable rational group. Hence, in order to prove the stated theorem, we need to show that the only other possibility of the GK-graph of $G$, say $\Gamma$, is the connected graph with edge $3-5$ missing. 
	
 The graph with no edge is already ruled out by \Cref{Higman3Vertices}. If $\Gamma$ has exactly one edge, i.e., $2$ connected components, then by \Cref{ConnectedComponents}, $G$ must be a Frobenius or a $2$-Frobenius group. But, by the classification of rational Frobenius groups \cite{DS}, it follows that $G$ cannot be a Frobenius group and in view of the fact that the order of a $2$-Frobenius rational group is not divisible by $5$ \cite[Lemma 4]{DIM}, $G$ cannot be a $2$-Frobenius group. Therefore, $\Gamma$ must have at least two edges. Furthermore, by \Cref{6and35}, $\Gamma$ cannot be $(3-5-2)$. So, it only remains to prove that $\Gamma\neq (2-3-5)$.

 Let $G$ be a minimal solvable rational group that realizes the graph $\Gamma = (2-3-5)$ and let $A$ be a minimal normal subgroup of $G$. By \Cref{MANS}, $A$ is an elementary abelian Sylow $p$-subgroup of $G$, with $p\in\{3,5\}$. If $p=3$, then $\GK{(G/A)}=(2 ~~5)$, and hence,  by \Cref{ConnectedComponents}, $G/A$ must either be a Frobenius group or a 2-Frobenius group. The latter option is not
 feasible since $5$ does not divide the order of a 2-Frobenius rational group. Hence, $G/A$ must be Frobenius and therefore isomorphic to $C_5^2\rtimes Q_8$, by {\cite{DS}}. Then the Sylow $2$-subgroup of $G$ is $Q_8$. If $p=5$,  then \Cref{CyclicSylow} yields that the Sylow $2$-subgoup of $G$ is either $C_4$ or $Q_8$. 
As $G$ has a rational element $g$ of order $3\cdot 5$, which implies that $B_G(g)\simeq C_4\times C_2$, both cases lead to a contradiction. \hfill  $\Box$\\

\section{GK-graphs of solvable \cut groups with $4$ vertices}\label{Section4}

In this section, we prove \Cref{T4} which restricts the possibilities of GK-graphs of solvable \cut groups with more than  three vertices. The first part follows from \Cref{PGroups}. To prove the second part, we firstly prove that if $\Gamma$ is a GK-graph of a solvable \cut group and has four vertices, then $\Gamma$ has at least three edges, and secondly that it cannot have exactly three edges. Then we will complete the proof excluding one by one the graphs with at least four edges not appearing in \Cref{Four}.\\

All throughout this section $G$ is a
solvable \cut group with $|\pi(G)|>3$. Then $\pi(G) = \{2,3,5,7\}$, by \Cref{2357}.

\subsection{Excluding graphs with less than $3$ edges} 
In this subsection, we prove that $\Gamma$ has at least three edges. 
We start excluding Frobenius and 2-Frobenius groups from our discussion: 

\begin{proposition}\label{Frobenius3Vertices} 
	The order of a finite \cut group which is a Frobenius group or a $2$-Frobenius {group} is divisible by at most 3 primes.
\end{proposition}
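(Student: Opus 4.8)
The plan is to prove the statement by bounding the number of prime divisors that can appear in the order of a finite \cut group that is either Frobenius or $2$-Frobenius, exploiting the coprimality of kernel and complement together with the strong restrictions on \cut groups collected in \Cref{PrimeSpectrum} and \Cref{FrobeniusCut}. Since any such group is solvable (Frobenius and $2$-Frobenius groups are solvable under the present hypotheses, or more precisely one may invoke \Cref{2357} directly as $\pi(G)\subseteq\{2,3,5,7\}$), we know from the outset that $\pi(G)\subseteq\{2,3,5,7\}$, so the only thing to rule out is $\pi(G)=\{2,3,5,7\}$.

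First I would dispose of the Frobenius case by a direct inspection of \Cref{FrobeniusCut}: every group in that complete classification of finite Frobenius \cut groups has order divisible by at most three primes (indeed the kernel is a $p$-group for $p\in\{3,5,7\}$ and the complement involves only $\{2,3\}$), so no Frobenius \cut group realizes all four primes. Concretely, in each listed family the kernel $N=\F(G)$ is an elementary abelian $p$-group for a single prime $p$, while the complement $K$ has $\pi(K)\subseteq\{2,3\}$; hence $|\pi(G)|=|\pi(N)\cup\pi(K)|\le 3$.

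For the $2$-Frobenius case I would use \Cref{GKLNS}. Write $N=\F_1(G)$, $K/N=\F_2(G)/\F_1(G)$ and $G/\F_2(G)$ for the lower kernel, upper kernel, and upper complement respectively; these three sections have pairwise coprime orders, so $\pi(G)$ is partitioned as $\pi(\F_1(G))\sqcup\pi(\F_2(G)/\F_1(G))\sqcup\pi(G/\F_2(G))$. By \Cref{GKLNS}, $\F_2(G)/\F_1(G)$ is cyclic of odd order and $G/\F_2(G)$ is cyclic. The key observation is that the two Frobenius quotients impose \cut-type rationality constraints: the upper Frobenius group $G/\F_1(G)$ is itself a \cut group (being an epimorphic image of $G$, using \Cref{ElementaryDP}(3)) and is Frobenius with cyclic kernel $\F_2(G)/\F_1(G)$ of odd order, so it must appear in \Cref{FrobeniusCut} with a \emph{cyclic} kernel; the only such candidates are $C_3\rtimes C_2=S_3$, $C_5\rtimes C_4$, $C_7\rtimes C_6$, and $C_7\rtimes(Q_8\times C_3)$ has non-cyclic complement but cyclic kernel $C_7$. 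This forces $\pi(\F_2(G)/\F_1(G))$ to be a single prime in $\{3,5,7\}$ and bounds $\pi(G/\F_2(G))\subseteq\{2,3\}$ severely. Then $\F_1(G)$ must supply any remaining primes, and I would argue that the nilpotence of $\F_1(G)=N$ combined with the \cut condition on elements of $N$ (via \Cref{PrimePower} and \Cref{CyclicSylow}) prevents $N$ from contributing two further distinct primes while keeping $\gcd(|N|,[\F_2(G):\F_1(G)])=1$.

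I expect the main obstacle to be the $2$-Frobenius case, specifically the bookkeeping needed to show that the three coprime layers together cannot exhaust all four primes $\{2,3,5,7\}$. The cleanest route is likely to observe that $5$ and $7$ can each occur only in the kernel $\F_1(G)$ (since by \Cref{PrimePower} an element of order $5$ forces a commuting element of order $4$, and an element of order $7$ forces a commuting element of order $3$, which are incompatible with the fixed-point-free Frobenius actions unless $5,7$ sit inside the nilpotent kernel), so that if both $5\in\pi(G)$ and $7\in\pi(G)$ then $\{5,7\}\subseteq\pi(\F_1(G))$; then $\F_1(G)$ has an element of order $5$ and an element of order $7$ which, by nilpotence, commute, producing an element of order $35$. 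But in the upper Frobenius group $G/\F_1(G)$ the complement structure and the \cut restriction force $\F_1(G)$ to be the kernel of a Frobenius action, and an element of order $35$ in the kernel being rational/inverse semi-rational demands a large $B_G$-image that cannot be accommodated inside the cyclic upper layers — yielding the desired contradiction and showing $|\pi(G)|\le 3$.
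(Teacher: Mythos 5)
Your reduction of the Frobenius case and the first half of your $2$-Frobenius argument match the paper: by \Cref{GKLNS} the quotient $G/\F_1(G)$ is a Frobenius \cut group with cyclic kernel of odd order and cyclic complement, so by \Cref{FrobeniusCut} it must be one of $S_3$, $C_5\rtimes C_4$, $C_7\rtimes C_3$ or $C_7\rtimes C_6$ (you omit $C_7\rtimes C_3$ and instead mention $C_7\rtimes(Q_8\times C_3)$, which is excluded by the cyclic-complement condition, but this is cosmetic). The genuine gap is in how you finish. Your ``cleanest route'' rests on the claim that $5$ and $7$ can occur only in $\pi(\F_1(G))$, and this claim is false: in three of the four cases above one of these primes lies in the upper kernel $\F_2(G)/\F_1(G)$, and then by coprimality it cannot divide $|\F_1(G)|$ at all. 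The paper's own \Cref{2Frobeniuscut} exhibits $2$-Frobenius \cut groups $C_2^4\rtimes(C_5\rtimes_\Frob C_4)$ and $C_3^6\rtimes(C_7\rtimes_\Frob C_3)$ in which $5$, respectively $7$, sits in the upper kernel. Your justification also misreads \Cref{PrimePower}: inverse semi-rationality of an element $g$ of order $5$ (resp.\ $7$) produces an element of $B_G(g)=\N_G(\GEN{g})/\C_G(g)$ of order $4$ (resp.\ $3$), i.e.\ an element \emph{normalizing} $\GEN{g}$ and inducing an automorphism of that order, not an element \emph{commuting} with $g$; so there is no conflict with a fixed-point-free action. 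Consequently your order-$35$ contradiction is available only in the single case $G/\F_1(G)\simeq S_3$, and for the remaining three cases the proposal offers no argument beyond ``I would argue that\dots''.

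What is missing is precisely the paper's closing step, which runs in the opposite direction: since $\pi(G)=\{2,3,5,7\}$ while $\pi(G/\F(G))$ contains only two primes, the relevant prime among $\{5,7\}$ is forced \emph{into} $\F(G)$; one takes $x$ central in $\F(G)$ of order $7$ (when $G/\F(G)\simeq C_5\rtimes C_4$) or of order $5$ (in the other three cases). Then $\F(G)\subseteq\C_G(x)$, so $B_G(x)$ is a section of $G/\F(G)$; but the \cut condition forces $3\mid|B_G(x)|$ when $|x|=7$, impossible since $C_5\rtimes C_4$ is a $3'$-group, and forces $4\mid|B_G(x)|$ when $|x|=5$, impossible since none of $|S_3|$, $|C_7\rtimes C_3|$, $|C_7\rtimes C_6|$ is divisible by $4$. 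A secondary slip in your setup: the three layers of a $2$-Frobenius group are \emph{not} pairwise coprime --- only the upper kernel is coprime to the other two --- so $\pi(G)$ is not the disjoint union you assert; for instance $2$ divides both $|\F_1(G)|$ and $[G:\F_2(G)]$ in $C_2^6\rtimes(C_7\rtimes_\Frob C_6)$.
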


\begin{proof} If $G$ is Frobenius and \cut then the result follows directly from Theorem~\ref{FrobeniusCut}.
	So suppose that $G$ is a $2$-Frobenius \cut group with $|\pi(G)|>3$. 
	By \Cref{2357}, $\pi(G) = \{2,3,5,7\}$, as $2$-Frobenius groups are always solvable. 
	By Lemma~\ref{GKLNS}, $G/\F(G)$ is a Frobenius group with cyclic kernel of odd order and cyclic complement. 
	Using again Theorem~\ref{FrobeniusCut} we have that $G/\F(G)$ is isomorphic to one of the following four Frobenius groups: 
	\[C_3 \rtimes C_2 \simeq S_3,\quad C_5 \rtimes C_4,\quad C_7 \rtimes C_3,\quad \text{or}\quad C_7\rtimes C_6. \]
	
	Assume first that $G/\F(G) \simeq C_5 \rtimes C_4$. Then $\F(G)$ is a nilpotent group of order divisible by $7$ and hence there is an element $x$ in the center of $\F(G)$ of order $7$. 
	Thus $B_G(x) = \N_G(\langle x \rangle) / \C_G(x)$ has to be isomorphic to a quotient of $G/\F(G)$, which is a $3'$ group. 
	Since $|\Aut(\langle x\rangle)|=6$, we get that $\left[\Aut(\langle x \rangle) : B_G(x) \right] > 2$, contradicting the fact that $G$ is a \cut group.
	
	Similarly, if $G/\F(G) \simeq S_3$ or $C_7 \rtimes C_3$ or $C_7 \rtimes C_6$, then there is an element $x$ of order 5 in the center of $\F(G)$.
Reasoning as above, we can prove that $|B_G(x)|$ is not divisible by $4$, so that  $x$ is not inverse semi-rational by \Cref{PrimePower}, a contradiction.
\end{proof}

As a direct consequence of Theorem~\ref{ConnectedComponents} and Proposition~\ref{Frobenius3Vertices}, we obtain the following 

\begin{corollary}\label{3ImpliesConnected}
	The GK-graph of a finite solvable \cut group with more than 3 vertices is connected.
\end{corollary}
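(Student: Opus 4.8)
The statement to prove is \Cref{3ImpliesConnected}: the GK-graph of a finite solvable \cut group with more than three vertices is connected.

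\medskip

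The plan is to derive this immediately from the two results stated just before it, namely \Cref{ConnectedComponents} (the Gruenberg--Kegel theorem, specialized to solvable groups) and \Cref{Frobenius3Vertices}. First I would recall that by \Cref{2357}, a finite solvable \cut group $G$ with $|\pi(G)|>3$ satisfies $\pi(G)=\{2,3,5,7\}$, so in particular such a group exists and the hypothesis ``more than $3$ vertices'' really means exactly four vertices. The key structural input is \Cref{ConnectedComponents}, which asserts that for a finite solvable group $G$, the graph $\GK(G)$ has at most two connected components, and it has exactly two components \emph{if and only if} $G$ is a Frobenius group or a $2$-Frobenius group.

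\medskip

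The argument is then a short contrapositive. Suppose $G$ is a finite solvable \cut group with more than three vertices and suppose, for contradiction, that $\GK(G)$ is disconnected. Since $G$ is solvable, \Cref{ConnectedComponents} tells us that $\GK(G)$ has exactly two connected components, and consequently that $G$ must be either a Frobenius group or a $2$-Frobenius group. But \Cref{Frobenius3Vertices} states that the order of a finite \cut group which is Frobenius or $2$-Frobenius is divisible by at most three primes, i.e.\ $|\pi(G)|\leqslant 3$. This contradicts the hypothesis that $\GK(G)$ has more than three vertices, since the vertices of $\GK(G)$ are exactly the primes in $\pi(G)$. Hence $\GK(G)$ cannot be disconnected, and being a graph of a solvable group with at most two components, it is therefore connected.

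\medskip

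There is essentially no obstacle here: the corollary is a formal combination of the two cited results, and all the real work (the classification underlying \Cref{FrobeniusCut}, from which \Cref{Frobenius3Vertices} is deduced, and the Gruenberg--Kegel component theorem) has already been carried out earlier in the paper. The only point requiring a moment's care is the logical bookkeeping: one must invoke solvability to pass from ``disconnected'' to ``exactly two components and hence Frobenius or $2$-Frobenius'' via \Cref{ConnectedComponents}, and then use \Cref{Frobenius3Vertices} to rule this out on the grounds of the prime count. I would present it as a one-line deduction, stating that the result follows directly by combining \Cref{ConnectedComponents} and \Cref{Frobenius3Vertices}.
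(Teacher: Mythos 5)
Your proof is correct and is exactly the paper's argument: the paper derives \Cref{3ImpliesConnected} as a direct consequence of \Cref{ConnectedComponents} and \Cref{Frobenius3Vertices}, just as you do. Your contrapositive write-up, including the use of solvability to get ``disconnected $\Rightarrow$ Frobenius or $2$-Frobenius'' and the prime-count contradiction, matches the intended one-line deduction.
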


By \Cref{3ImpliesConnected}, $\Gamma$ has exactly one connected component and as $|\pi(G)| =4$, necessarily $\Gamma$ has at least three edges.

\subsection{Excluding graphs with exactly $3$ edges}
In this subsection, we prove that $\Gamma$ has at least four edges. We first prove the following lemma which holds for an arbitrary finite \cut group.

\begin{lemma}\label{G2G3Exclusions}
	Let $G$ be a finite \cut group. 
	\begin{enumerate}
		\item\label{Not4pG2Cyclic} Assume $G_2$ is cyclic and $p$ is an odd prime. Then $G$ has no elements of order $4p$. Furthermore if $p\equiv 1 \mod 4$ then $G$ has no elements of order $2p$.	
		\item\label{Not2pQuaternion} If $G_2$ is quaternion of order $8$ then $G$ has no elements of order $2p$ with $p$ prime and $p\equiv 1 \mod 4$.
		\item\label{No21} If  $G_3$ is cyclic, then $G$ has no elements of order $3\cdot 7$.
	\end{enumerate}  
\end{lemma}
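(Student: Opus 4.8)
The plan is to argue by contradiction in every case, always splitting an element $g$ of the forbidden order as $g=g_q\,g_r$ into its two prime-power parts, so that $\GEN{g}=\GEN{g_q}\times\GEN{g_r}$ and $\Aut(\GEN{g})=\Aut(\GEN{g_q})\times\Aut(\GEN{g_r})$. Inverse semi-rationality of $g$ gives $|B_G(g)|=|\operatorname{im}(\iota_g)|\geqslant\varphi(|g|)/2$, which forces $\operatorname{im}(\iota_g)$ to project onto a large subgroup of the automorphism group of one factor. The engine of all three parts is an extraction principle that I would state once: if some $x\in\N_G(\GEN{g})$ induces on a cyclic factor $C$ of $\GEN{g}$ an automorphism of $q$-power order, then its $q$-part $x_q$ already induces that automorphism (the $q'$-part of $x$ acts trivially on the $q$-primary data, and a homomorphism sends $q$-parts to $q$-parts); moreover $x_q$ normalizes $C$, so $\GEN{C,x_q}$ is a $q$-group and lies inside a Sylow $q$-subgroup of $G$. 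The contradiction will always be that the rigidity of that Sylow $q$-subgroup forces $x_q$ into $C$, so that $x_q$ ends up commuting with the other factor although it was produced to act nontrivially on it.

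For (1), assume $G_2$ cyclic and $|g|=4p$, so $\Aut(\GEN{g})\cong C_2\times C_{p-1}$ and $|\operatorname{im}(\iota_g)|\geqslant p-1$. If $\operatorname{im}(\iota_g)$ projects onto $\Aut(\GEN{g_2})\cong C_2$, I extract a $2$-element $x_2$ inverting $g_2$; then $\GEN{g_2,x_2}$ is a $2$-group inside the cyclic $G_2$, hence abelian, so $x_2$ centralizes $g_2$ — contradicting that it inverts the order-$4$ element $g_2$. Otherwise $\operatorname{im}(\iota_g)=1\times\Aut(\GEN{g_p})$, so $g_p$ is rational and I extract a $2$-element $w_2$ inverting $g_p$ while fixing $g_2$; since $\exp(G_2)\mid 4$ by \Cref{GpCyclic} and $G_2$ is cyclic, $\GEN{g_2,w_2}=\GEN{g_2}$, forcing $w_2\in\GEN{g_2}$ to commute with $g_p$ — again a contradiction. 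For the second assertion of (1), take $|g|=2p$ with $p\equiv 1\bmod 4$; by \Cref{PrimePower}(1) the element $g$ is rational, so $B_G(g)\cong C_{p-1}$ contains an element of order $4$, yielding a $2$-element $w_2$ of order $4$ (using $\exp(G_2)\mid 4$) with $g_p^{w_2^2}=g_p^{-1}$ and $g_2^{w_2}=g_2$; since $G_2$ is cyclic with a unique involution and $g_2,w_2^2$ are both involutions of the $2$-group $\GEN{g_2,w_2}$, we get $g_2=w_2^2$, whence $g_p=g_p^{-1}$, which is impossible. (The order-$4p$ statement for $p\equiv1\bmod4$ also follows at once by applying the order-$2p$ statement to $g^2$.)

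Part (2) runs along identical lines: for $G_2\cong Q_8$ and $|g|=2p$ with $p\equiv1\bmod 4$, rationality of $g$ again produces a $2$-element $w_2$, of order $4$ because $Q_8$ has exponent $4$, with $w_2^2$ inverting $g_p$ and fixing $g_2$; as $Q_8$ has a unique involution and $\GEN{g_2,w_2}$ is a $2$-group, $g_2=w_2^2$ inverts $g_p$, contradicting that $g_2$ commutes with $g_p$. Part (3) is the exact analogue with the prime $3$: for $G_3$ cyclic and $|g|=21$, the projection of $\operatorname{im}(\iota_g)$ to $\Aut(\GEN{g_7})\cong C_6$ has order divisible by $3$, so some element induces an automorphism of order $3$ on $g_7$; its $3$-part $x_3$ then has order $3$ (by $\exp(G_3)\mid 3$ from \Cref{GpCyclic}), centralizes $g_3$ because $\Aut(C_3)$ has no element of order $3$, and lies in the cyclic group $G_3$, so $\GEN{g_3,x_3}=\GEN{g_3}$ and $x_3\in\GEN{g_3}$ commutes with $g_7$ — contradicting that $x_3$ acts nontrivially on $g_7$.

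The main obstacle is the first assertion of (1): inverse semi-rationality of an element of order $4p$ does \emph{not} by itself force the inversion of the order-$4$ part $g_2$, because the degenerate image $\operatorname{im}(\iota_g)=1\times\Aut(\GEN{g_p})$ leaves $g_2$ fixed while still being admissible. One therefore cannot argue purely on the $2$-part and must instead move the contradiction onto the $p$-part, realizing the inversion of $g_p$ and exploiting the uniqueness of the involution in the cyclic $G_2$. The delicate bookkeeping throughout is checking that the extracted $q$-part genuinely realizes the intended $q$-power automorphism and that $\GEN{C,x_q}$ is a $q$-group, so that the Sylow structure may be applied; once these are in place the remaining steps are routine.
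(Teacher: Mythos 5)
Your proof is correct, but it runs along a genuinely different route than the paper's, which argues throughout by counting orders rather than by extracting elements. For the first claim of (1), the paper notes that \Cref{GpCyclic} gives $G_2\simeq C_4$, so $\C_G(g)$ contains the Sylow $2$-subgroup $\GEN{g_2}$ and hence $|B_G(g)|$ is odd, while inverse semi-rationality forces $|B_G(g)|$ to be even (as $|\Aut(\GEN{g})|=2(p-1)$ is divisible by $4$); this one-line parity argument replaces your two-case analysis on the projection of $\operatorname{im}(\iota_g)$ to $\Aut(\GEN{g_2})$. For the order-$2p$ statements, the paper shows $4\nmid |B_G(g)|$ when $G_2$ is cyclic (since the $2$-part of $|\N_G(\GEN{g})|$ is at most $4$ and $g_2\in\C_G(g)$), respectively that the Sylow $2$-subgroup of $B_G(g)$ is elementary abelian when $G_2\simeq Q_8$, and in both cases this contradicts \Cref{PrimePower}\,(1), which demands an element of order divisible by $4$ in $B_G(g)$; your argument is the same obstruction made concrete in contrapositive form, realizing the hypothetical order-$4$ element of $B_G(g)$ by a $2$-element $w_2$ and using the unique involution of the cyclic or quaternion Sylow $2$-subgroup to force $w_2^2=g_2$. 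For (3) the paper again just counts: $3$ divides both $|\C_G(g)|$ and $|B_G(g)|$, so $9$ divides $|\N_G(\GEN{g})|$, contradicting $G_3\simeq C_3$. Both proofs rest on exactly the same pillars (\Cref{GpCyclic}, \Cref{PrimePower}, and the bound $|B_G(g)|\geqslant\varphi(|g|)/2$); the paper's counting buys brevity, while your uniform extraction principle (homomorphisms send $q$-parts to $q$-parts, and the extracted $q$-part is trapped in a rigid Sylow $q$-subgroup) buys a single mechanism that works verbatim in all three parts and pinpoints the element responsible for each contradiction.
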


\begin{proof}Let $G$ be a \cut group.

		(1)  Suppose first that $G$ has an element $g$ of order $4p$ and $G_2$ is cyclic. Then it follows from \Cref{GpCyclic} that $G_2 \simeq C_4$. Moreover, as  $\C_G(g)$ contains a Sylow $2$-subgroup of $G$, $|B_G(g)|$ is odd. This contradicts the fact that $g$ is inverse semi-rational in $G$ as clearly $|\Aut(\GEN{g})|$ is a multiple of $4$. Suppose now that 
		 $G$ has an element $g$ of order $2p$ and $p\equiv 1 \mod 4$. Then by \Cref{PrimePower}\,(1), $g$ must be rational in $G$, i.e. $ |\Aut(\GEN{g})|=|B_G(g)|$ and by \Cref{GpCyclic}, $|G_2|$ is either $2$ or $4$. But as $g$ has order $2p$, we have that $4\nmid |B_G(g)|$ and $4\mid|\Aut(\GEN{g})|$, a contradiction.
		 
		(2)  If $G_2$ is quaternion of order $8$ and $g \in G$ has order $2p$ then the Sylow $2$-subgroup of $B_G(g)$ is elementary abelian, so if $p\equiv 1 \mod 4$ then $g$ is not rational, in contradiction with \Cref{PrimePower}\,(1).
		
		(3) If {$G_3$} is non-trivial and cyclic, then $G_3 \simeq C_3$ by \Cref{GpCyclic}. If $g$ has order $3\cdot 7$ then both $|\C_G(g)|$ and $|B_G(g)|$ are multiples of $3$ and thus $|G_3|$ is a multiple of $9$.
\end{proof}

\begin{proposition}\label{4Vertices3Edges}
The GK-graph of a finite solvable \cut group with four vertices has at least four edges. 
\end{proposition}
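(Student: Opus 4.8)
The plan is to assume, for contradiction, that $\Gamma=\GK(G)$ has exactly three edges. By \Cref{3ImpliesConnected} the graph $\Gamma$ is connected, so a three-edge graph on the four vertices $\{2,3,5,7\}$ is a tree. First I would use \Cref{6and35} to cut the list of trees down to four. Since $7$ is a vertex of the connected graph it is incident with some edge, and each of the edges $2-7$, $3-7$, $5-7$ forces $2-3\in\Gamma$ by part~(1); moreover if $5-7\in\Gamma$ then part~(2) adds the edges $2-7$ and $3-5$, producing four distinct edges and contradicting that $\Gamma$ is a tree. Hence $2-3\in\Gamma$ and $5-7\notin\Gamma$, so the two remaining edges attach $5$ and $7$ to $\{2,3\}$. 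This leaves exactly the two ``stars'' $\{2-3,2-5,2-7\}$ and $\{2-3,3-5,3-7\}$ and the two ``paths'' $\{2-3,2-5,3-7\}$ and $\{2-3,3-5,2-7\}$.

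The two stars fall quickly. In the star centred at $2$ the primes $3,5,7$ are pairwise non-adjacent, so the Hall $\{3,5,7\}$-subgroup (which exists since $G$ is solvable) contains no element of order $15$, $21$ or $35$; that is, every one of its elements has prime-power order. By Higman's theorem that a finite solvable group all of whose elements have prime-power order has at most two prime divisors \cite{Hig57a}, this is impossible, as three primes divide its order. The star centred at $3$ is excluded identically using the Hall $\{2,5,7\}$-subgroup and the non-edges $2-5,2-7,5-7$.

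For the paths I would argue by minimal counterexample. Taking $G$ of least order realizing a path, \Cref{MANS} (applied with the bound ``at least three edges'' already secured by \Cref{3ImpliesConnected}) shows that a minimal normal subgroup $A$ is an elementary abelian Sylow $p$-subgroup, and that $G/A$ is a solvable \cut group whose spectrum is $\{2,3,5,7\}\setminus\{p\}$; by \Cref{PrimeSpectrum} this forces $p\in\{5,7\}$, so $A=G_5$ or $A=G_7$. Since $\gcd(|A|,|G/A|)=1$, Schur--Zassenhaus splits $G=A\rtimes(G/A)$, whence $G_q\cong(G/A)_q$ for $q\neq p$ and $\GK(G/A)$ is one of the three-vertex graphs classified in \Cref{TLess4}. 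Each non-edge $q-p$ of the path then means $G$ has no element of order $pq$, i.e.\ the Sylow subgroup $G_q$ acts fixed-point-freely on $A$, so $G_q$ is a Frobenius complement and, via \Cref{CyclicSylow} and \Cref{GpCyclic}, is forced into $\{C_2,C_4,Q_8\}$ for $q=2$, into $C_3$ for $q=3$, or into $C_5,C_7$ for $q=5,7$. Playing these restrictions against the surviving edges settles the path $\{2-3,2-5,3-7\}$: when $A=G_5$, the non-edge $3-5$ makes $G_3$ cyclic, so \Cref{G2G3Exclusions}(3) forbids the order-$21$ element required by the edge $3-7$; and when $A=G_7$, the non-edge $2-7$ forces $G_2\in\{C_2,C_4,Q_8\}$, which the edge $2-5$ (an element of order $10$) contradicts through \Cref{G2G3Exclusions}(1)--(2), together with the fact that rationality of the prime $5$ requires $|B_G(g_5)|=4$ and hence an element of order $4$.

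The delicate case, and the main obstacle, is the path $\{2-3,3-5,2-7\}$: here $5$ is attached to $3$ and $7$ to $2$, so the two simplifying edges $2-5$ and $3-7$ are \emph{absent}, and the clash between the fixed-point-free actions and the rationality requirement ($5\equiv1\bmod 4$ needs an order-$4$ automorphism of a $C_5$, while $7\equiv-1\bmod4$ needs an order-$3$ automorphism of a $C_7$) is far less direct. For this graph I expect to need in addition Lucido's structure theorem for diameter-$3$ graphs, \Cref{DiameterFL}: either $\ell_\F(G)\leq 3$, which I would analyse through the split $G=A\rtimes(G/A)$ above by examining how the order-$15$ and order-$14$ elements constrain the action of $G/A$ on $A$ (noting that a non-cyclic Sylow, such as an elementary abelian $5$-group, can never act fixed-point-freely, being barred as a Frobenius complement), or $G$ has a normal section isomorphic to $2.S_4$ with $\ell_\F(G)=4$. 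The latter alternative has to be excluded, and the subtle point is that although $2.S_4$ is itself not \cut\ (its spin character takes values in the real field $\QQ(\sqrt 2)$), the \cut\ property is not inherited by normal sections, so the exclusion must be carried out at the level of element orders and of the Fitting series rather than by any naive heredity argument.
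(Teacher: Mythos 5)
Your reduction to four candidate graphs, your exclusion of the two stars, and your treatment of the path $5-2-3-7$ are correct and essentially identical to the paper's own argument: the paper likewise combines \Cref{3ImpliesConnected} and \Cref{6and35} to isolate the same four graphs, excludes the stars via \Cref{Higman3Vertices} (your Hall-subgroup argument from Higman's prime-power-order theorem is precisely a re-derivation of that lemma), and then eliminates the path $5-2-3-7$ exactly as you do, using \Cref{MANS}, \Cref{CyclicSylow} and \Cref{G2G3Exclusions} in the two cases $A=G_5$ and $A=G_7$.

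The genuine gap is the remaining graph, the path $7-2-3-5$ with edges $2-3$, $2-7$, $3-5$: you do not exclude it, you only announce which tools you expect to need. This case is the heart of the proposition and occupies the bulk of the paper's proof. Naming \Cref{DiameterFL} and the splitting over the normal Sylow subgroup is the right starting point, but none of the substantive work is present. Concretely, the paper must: (i) show $\ell_\F(G)\geqslant 3$, because $G/\F(G)$ is a \cut group of order divisible by $5$ or $7$ and hence not nilpotent by \cite[Theorem 1]{BMP17}; (ii) for $p=5$, rule out the $2.S_4$ alternative by the order bound $16\nmid |G|$, and then, in the subcase $G_2\simeq Q_8$, identify $G/G_5\simeq Q_8\times(C_7\rtimes_\Frob C_3)$ (a computational step), analyse $\F(G)$ as a semisimple $\FF_5Q_8$-module, use rationality of the $5$-elements to force $\F(G)\simeq C_5\times C_5$, and obtain a contradiction from the fixed-point-free action of $G_7$ since $7\nmid 5^2-1$; (iii) for $p=7$ with $\ell_\F(G)=3$, pin down the structure $G=C_7^d\rtimes(C_{15}\rtimes C_4)$ and contradict inverse semi-rationality of the element $(1,\dots,1)\in\FF_7^d$ inside the Hall $\{3,7\}$-subgroup; and (iv) for $p=7$ with $\ell_\F(G)=4$, run a separate minimal-counterexample argument excluding any solvable \cut group $H$ with $H_3\simeq C_3$, $H_5\simeq C_5$, a normal section isomorphic to $2.S_4$, $\F(H)$ a $2$-group and $\GK(H)=(2-3-5)$. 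Your closing observation that the \cut property does not pass to normal sections is accurate, but it only explains why a naive exclusion of the $2.S_4$ alternative fails; it does not supply the argument that is actually needed. As it stands, the proposal establishes three of the four cases and leaves the hardest one open.
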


\begin{proof} Let $G$ be a finite solvable \cut group of minimal order such that $\Gamma=\GK(G)$ has $4$ vertices and $3$ edges. By \Cref{MANS}, for some prime $p\in \pi(G)$, $G_p$ is a normal subgroup of $G$ which is elementary abelian, so that $G/G_p$ is a solvable \cut group with $|\pi(G/G_p)|=3$. Hence, by \Cref{TLess4},  $\Gamma_{GK}(G/G_p)$ is one of the $8$ graphs in the bottom box of Figure~\ref{Less4}. This implies $p\in \{5,7\}$ and $2-3\in \Gamma.$ Moreover, $5-7\not\in \Gamma$ for, if $5-7\in \Gamma$, then by \Cref{6and35}, $2-3,2-7,3-5\in \Gamma$, contradicting the assumption that $\Gamma$ has only $3$ edges. Furthermore, in view of \Cref{3ImpliesConnected}, $\Gamma$ does not contain a triangle. This reduces the possibilities for $\Gamma$ to just four graphs, namely $\Gamma_1$, $\Gamma_2$, $\Gamma_3$ and $\Gamma_4$ as listed in \Cref{Fig4Graphs}.

	\begin{figure}[ht!]
		\begin{tabular}{ccccccc}
			
				\begin{subfigure}{.15\textwidth}
				\begin{tikzpicture}
				\node[label=west:{$2$}] at (0,1.5) (2){};
				\node[label=east:{$3$}] at (1.5, 1.5) (3){};
				\node[label=west:{$5$}] at (0, 0) (5){};
				\node[label=east:{$7$}] at (1.5, 0) (7){};
				\foreach \p in {2,3,5,7}{
					\draw[fill=black] (\p) circle (.075cm);
				}
				\draw (2)--(3);
				\draw (2)--(7);
				\draw (3)--(5);
				\end{tikzpicture}
			\end{subfigure} 
			& \hspace{1cm} &

		\begin{subfigure}{.15\textwidth}
			\begin{tikzpicture}
			\node[label=west:{$2$}] at (0,1.5) (2){};
			\node[label=east:{$3$}] at (1.5, 1.5) (3){};
			\node[label=west:{$5$}] at (0, 0) (5){};
			\node[label=east:{$7$}] at (1.5, 0) (7){};
			\foreach \p in {2,3,5,7}{
				\draw[fill=black] (\p) circle (.075cm);
			}
			\draw (2)--(5);
			\draw (2)--(3);
			\draw (3)--(7);
			\end{tikzpicture}
		\end{subfigure}
		
		& \hspace{1cm} &	\begin{subfigure}{.15\textwidth}
			\begin{tikzpicture}
			\node[label=west:{$2$}] at (0,1.5) (2){};
			\node[label=east:{$3$}] at (1.5, 1.5) (3){};
			\node[label=west:{$5$}] at (0, 0) (5){};
			\node[label=east:{$7$}] at (1.5, 0) (7){};
			\foreach \p in {2,3,5,7}{
				\draw[fill=black] (\p) circle (.075cm);
			}
			\draw (3)--(5);
			\draw (2)--(3);
			\draw (3)--(7);
			\end{tikzpicture}
		\end{subfigure}& \hspace{1cm} &	\begin{subfigure}{.15\textwidth}
		\begin{tikzpicture}
		\node[label=west:{$2$}] at (0,1.5) (2){};
		\node[label=east:{$3$}] at (1.5, 1.5) (3){};
		\node[label=west:{$5$}] at (0, 0) (5){};
		\node[label=east:{$7$}] at (1.5, 0) (7){};
		\foreach \p in {2,3,5,7}{
			\draw[fill=black] (\p) circle (.075cm);
		}
		\draw (2)--(5);
		\draw (2)--(3);
		\draw (2)--(7);
		\end{tikzpicture}
	\end{subfigure}
			\\
			$\Gamma_1$	& & $\Gamma_2$ &&$\Gamma_3$&&$\Gamma_4$
		\end{tabular}
\caption{}\label{Fig4Graphs}
	\end{figure}
The graphs $\Gamma_3$ and $\Gamma_4$ are not in accordance with \Cref{Higman3Vertices}, with $X=\{2,5,7\}$ and $\{3,5,7\}$ respectively. This leaves us with the {graphs} $\Gamma_1$ and $\Gamma_2$ as the only possible choices of $\Gamma$, which we rule out next. 
	
	Suppose first that $\Gamma=\Gamma_2$. If $p=5$, then by \Cref{CyclicSylow}, $G_3$ is cyclic of order $3$ and hence by 
\Cref{G2G3Exclusions} \eqref{No21}, $G$ has no element of order $3\cdot 7$, a contradiction. If $p=7$, then again by \Cref{CyclicSylow}, $G_2$ is either cyclic of order $2$ or $4$ or isomorphic to $Q_8$. Therefore,  \Cref{G2G3Exclusions}.\eqref{Not4pG2Cyclic}  and \eqref{Not2pQuaternion} imply that $G$ has no element of order $2\cdot 5$, again a contradiction.

Thus, it only remains to prove that $\Gamma\neq \Gamma_1$. Assume that $\Gamma=\Gamma_1$. Observe that as $\Gamma$ does not contain the edge $5-7$, it follows that the Fitting group of $G$ is $G_p$. Then $G/\F(G)$ is a \cut group of order divisible by $5$ or $7$ and hence it cannot be nilpotent \cite[Theorem 1]{BMP17}. Consequently, the Fitting length of $G$ is at least $3$.  Also, as $\Gamma$ has diameter $3$, by \Cref{DiameterFL}, we have that either  $\ell_\F(G)=3$ 
or $\ell_\F(G)=4$ and $G$ has a section isomorphic to the group $2.S_4=\SG{48,28}$ given by the presentation in \eqref{Group2S4}.  Hereforth, we split the details into two cases:

\textbf{Case(1):} $p=5$.

	By \Cref{CyclicSylow}, $G_7\simeq C_7$ and $G_2$ is either a cyclic group of order $4$ or the quaternion group of order $8$.
In any case, $16 \nmid |G|$ and hence $2.S_4$ cannot be a section of $G$. Thus $\ell_\F(G)=3$ and $G_{\{5,7\}}\subseteq \F_2(G)$. As $G$ has no elements of order $3\cdot 7$, $3\nmid |\F_2(G)|$ and applying \Cref{CyclicSylow} to $G/\F(G)$ with $p=7$ and $q=3$ we deduce that  $G_3 \simeq C_3$. 

If $G_2\simeq C_4$ then $G$ has no elements of order $12$ by Lemma~\ref{G2G3Exclusions} and hence $G_{\{2,3\}}$ is the unique non-abelian group of order $12$ with an element of order $4$, namely $C_3\rtimes C_4$. As $\Aut(G_7)$ is abelian, the kernel of the action of $G_{\{2,3\}}$ on $G_7$ contains the commutator subgroup of $G_{\{2,3\}}$ which is $G_3$. This yields an element of order $3\cdot 7$ in $G$, a contradiction.

Thus $G_2\simeq Q_8$. Then $G/G_5$ is a solvable \cut group with Sylow subgroups isomorphic to $Q_8$, $C_3$ and $C_7$. 
Using the GAP function \verb+IsCutGroup+ included in \Cref{SubsectionRationalCut} one can easily check that there is only one group satisfying these conditions: $Q_8\times (C_7\rtimes{_{\Frob}} C_3){=\SG{168,21}}$. 
Since $G/\F(G)$ has a normal Sylow $2$-subgroup, $\F(G)G_2$ is the unique Hall $\{2, 5\}$-subgroup of $G$. Every $5$-element $x$ of $G$ is rational in $G$ and $B_G(x)$ is a $2$-group, so $x$ has to be rational in the unique Hall $\{2, 5\}$-subgroup $\F(G)G_2$ of $G$. Now we may consider $\F(G)$ as a module over $\FF_5G_2=\FF_5Q_8$,  which is a semi-simple algebra. The Wedderburn decomposition of  $\FF_5Q_8$ is $\FF_5 Q_8 \simeq \FF_5 \oplus \FF_5 \oplus \FF_5 \oplus \FF_5 \oplus \operatorname{M}_2(\FF_5)$ and in all $1$-dimensional representation the involution of $G_2 = \GEN{i, j} \simeq Q_8$ acts by multiplication with $1$. Since $G$ does not contain elements of order $2\cdot 5$, $\F(G)$, as $\FF_5 G_2$-module, is a direct sum of $s$ copies of the $2$-dimensional simple $\FF_5 G_2$-module, which corresponds to the $\FF_5$- representation
\[ i\mapsto A_i = \pmatriz{{cc} 2 & 0 \\ 0 & -2}, \qquad j\mapsto A_j = \pmatriz{{cc} 0 & 1 \\ -1 & 0}.\] of $G_2$. After suitable conjugation, we may assume that $G_2$ acts on $\FF_5^{2s}$ via $\psi \colon G_2 \to \GL_{2s}(5)$ such that $\psi(i)$ (respectively $\psi(j)$) is a block diagonal matrix $B_i$ (respectively $B_j$) with $s$ blocks $A_i$ (respectively $A_j$) on the diagonal. We claim $s = 1$.
If $s \geqslant 2$, then for the element $x = (1,1, 0, 1, 0, 0,..., 0, 0)^t \in \FF_5^{2s}$ one verifies that $\{\psi(q)x \colon q \in G_2\} \cap \langle x \rangle = \{x, -x\} \subsetneq \GEN{x} \setminus \{0\}$ and the element corresponding to $x$ in $\F(G)$ cannot be rational in $G$. Hence $s = 1$ and $\F(G) \simeq C_5 \times C_5$. As $G$ does not have elements of order $5 \cdot 7$, the action of $G_7$ on $\F(G) \setminus \lbrace 1 \rbrace$ has to be fixed-point free. Hence $\vert \F(G) \vert -1$ is a multiple of $\vert G_7 \vert = 7$. But this is in contradiction with $\F(G) \simeq C_5 \times C_5$.

\textbf{Case(2): $p=7$.}

	By \Cref{CyclicSylow} we have that $G_3 \simeq C_3$ and $G_5 \simeq C_5$. We set $H=G/\F(G)$. 
Recall that either $\ell_\F(G)=3$ or $\ell_\F(G)=4$ and $G$ has a normal section isomorphic to $2.S_4$. Therefore, we split the argument in this case according to these two possibilities.

\textbf{Subcase(2)(i):} $p=7$ and $\ell_\F(G)=3$.

As $G/\F_2(G)$ is nilpotent, we have that $5\nmid |G/\F_2(G)|$, and this implies that $H$ has a normal Sylow $5$-subgroup. Applying \Cref{CyclicSylow} to the group $H$ with $p=5$ and $q=2$, it follows that $G_2$ is either cyclic of order $4$ or isomorphic to $Q_8$. 
Since the action of $G_2$ on $G_{\{5,7\}}/G_7\simeq C_5$ must be faithful because $G$ has no elements of order $2\cdot 5$ we actually have $G_2\simeq C_4$. By \Cref{G2G3Exclusions}, $G$ does not have elements of order $12$, implying that $G_{\{2,3\}}\simeq C_3\rtimes C_4$. 
As $G$ contains an element of order $3\cdot 5$, we deduce that $G=C_7^d\rtimes (C_{15}\rtimes C_4)$ with $C_4$ acting by inversion on $G_3$ and faithfully on  $G_5$. 
In particular, $H$ has no non-trivial central elements and no normal $2$-subgroups. 
As $G$ does not have elements of order $3\cdot 7$ or $5\cdot 7$, the action of $H$ on $\F(G)$ is faithful. Thus, identifying $G_7$ with a $d$-dimensional vector space over $\FF_7$, this action is determined by an injective homomorphism $f:H\rightarrow \GL_d(\FF_7)$. {This implies that $5$ divides $7^d-1$ and hence $4\mid d$.} Let $t$ be an element of order $3$ in $G$. After suitable conjugation one may assume that $f(t)$ is a diagonal matrix with diagonal entries elements of $\FF_7$ of order $3$ (namely $2$ or $4$), but not all equal.  Now, since $G/\F(G)$ has a normal Sylow $3$-subgroup, $\F(G)G_3$ is the unique Hall ${\{3, 7\}}$-subgroup
	of $G$. Every $x\in G$ of order $7$ which is inverse semi-rational in $G$, already has to be inverse semi-rational in $\F(G)G_3$. However, if $x=(1,1,\dots,1)\in \FF_7^d=G_7$, then $\N_{G_{\{3,7\}}}(\GEN{x})=G_7$ and hence $B_{G_{\{3,7\}}}(x)=1$, a contradiction.

	\textbf{Subcase(2)(ii)}: $p=7$ and $\ell_\F(G)=4$.

In this case, $G$ has a normal section isomorphic to $2.S_4$ and hence so does $H$. As $\ell_\F(2.S_4)=3$ and $\F(2.S_4)\simeq Q_8$, the order of the Fitting subgroup $\F(H)$ is divisible by $2$. Then $5 \nmid |\F(H)|$, as $H$ has no element of order $2\cdot 5$. Also, as $H_3\simeq C_3$, we get that $\F(H)$ is a $3'$-group. Therefore, $F(H)$ is a $2$-group. Putting together, we have a group $H$ with the properties listed below:
	\begin{enumerate}
		\item $H$ is solvable and \cut.
		\item $H_3 \simeq C_3$ and  $H_5 \simeq C_5$.
		\item $H$ has a normal section $M/N$ isomorphic to $2.S_4$.
		\item $\ell_\F(H) = 3$. 
		\item $\F(H)$ is a $2$-group.
		\item $\GK(H) = (2-3-5).$
	\end{enumerate}
	We prove that there is no group satisfying conditions (1)-(6). For this, assume that $H$ is a group of minimal order with these properties and let $K$ be a minimal normal non-trivial subgroup of $H$. We  show that $H/K$ also satisfies conditions (1)-(6), contradicting the minimality of $H$. 
	
Clearly, $H/K$ satisfies (1). Also, being minimal normal subgroup of solvable group $H$, we have that $K$ is elementary abelian and hence $K\subseteq \F(H)$, so that $K$ is a $2$-subgroup. Consequently, $H/K$ satisfies (2).

 In order to prove that $H/K$ satisfies (3), we first verify that  either $M\cap K=1$ or $N\cap K\ne 1$. Indeed if $M\cap K\neq 1$ and  $N\cap K= 1$, then, it follows by minimality of $K$, that $K\subseteq M$. Moreover, under the natural projection $\eta \colon H \to H/N$,  $\eta(M) \simeq M/N $ and as $N\cap K=1$, it follows that $\eta(K)$ is an elementary abelian normal  subgroup of $\eta(M)$ isomorphic to $K$. Since the only elementary abelian normal subgroup of $2.S_4$ has order $2$, $K$ has order $2$ implying that $H$ has an element of order $2\cdot 5$, a contradiction. 
 
  So either $M\cap K=1$ or $N\cap K\ne 1$. Now, consider the natural projection $\kappa\colon H \to H/K$. In the first case $\kappa(M)/\kappa(N)  \simeq M/N \simeq 2.S_4$. In the second case, since  $K$ is a minimal normal subgroup of $H$, $N \slunlhd H$ and $N \cap K \not= 1$, necessarily $K \subseteq N \subseteq M$. Then, similar to the first case, we have $\kappa(M)/\kappa(N)  \simeq M/N \simeq 2.S_4$. This proves that $H/K$ satisfies (3).

Furthermore, as $H/K$ satisfies (3), we have that $3=\ell_\F(2.S_4)\leqslant \ell_\F(H/K)\leqslant \ell_\F(H)=3$, so that  $\ell_\F(H/K)=3$, i.e., $H/K$ satisfies (4). Since, $H/K$ satisfies (1)-(4), by the same arguments as for $H$ itself, it follows that $F(H/K)$ is a $2$-group, i.e., $H/K$ satifies (5). This further yields that $3-5$ is an edge of ${\GK(}H/K{)}$ and as $2-3$ is an edge of $\GK(2.S_4)$, it is also an edge of $\GK(H/K)$, thereby implying that $H/K$ satisfies (6).
\end{proof}		
We have thus proved that the GK-graph of a solvable \cut group with $4$ vertices has at least $4$ edges. We next proceed to eliminate impossible choices of such graphs.

\subsection{Eliminating  non-feasible graphs with more than 3 edges}

 We now complete the proof of  \Cref{T4}.\\
 
	\noindent\textbf{\textit{Proof of \Cref{T4}.}} As already mentioned the first part of \Cref{T4} follows from \Cref{PGroups}. For the second part, let $\Gamma=\GK(G)$ for a finite solvable \cut group $G$, such that $\Gamma$ has $4$ vertices. 

 By \Cref{4Vertices3Edges}, $\Gamma$ has at least 4 edges. We want to prove that it is one of the seven graphs displayed in \Cref{Four}. As the graph with six edges appears in \Cref{Four}, we may assume that $\Gamma$ has $4$ or $5$ edges. There are $6$ possible graphs with $5$ edges but only the $3$ displayed in \Cref{Four} satisfy the conditions of  \Cref{6and35}. Now, if $\Gamma$ has $4$ edges, there are $15$ possible choices, out of which only $5$  satisfy the conditions of \Cref{6and35}, while $3$ of them appear in \Cref{Four}. 
The remaining two are  $\Gamma_5$ and $\Gamma_6$ as given in  \Cref{Fig2Graphs}.

\begin{figure}[ht!]
	\begin{tabular}{ccc}
	\begin{subfigure}{.15\textwidth}
		\begin{tikzpicture}
		\node[label=west:{$2$}] at (0,1.5) (2){};
		\node[label=east:{$3$}] at (1.5, 1.5) (3){};
		\node[label=west:{$5$}] at (0, 0) (5){};
		\node[label=east:{$7$}] at (1.5, 0) (7){};
		\foreach \p in {2,3,5,7}{
			\draw[fill=black] (\p) circle (.075cm);
		}
		\draw (2)--(3);
		\draw (2)--(5);
		\draw (2)--(7);
		\draw (3)--(7);
		\end{tikzpicture}
	\end{subfigure} 
	
		& \hspace{1cm} & 
		
			\begin{subfigure}{.15\textwidth}
			\begin{tikzpicture}
			\node[label=west:{$2$}] at (0,1.5) (2){};
			\node[label=east:{$3$}] at (1.5, 1.5) (3){};
			\node[label=west:{$5$}] at (0, 0) (5){};
			\node[label=east:{$7$}] at (1.5, 0) (7){};
			\foreach \p in {2,3,5,7}{
				\draw[fill=black] (\p) circle (.075cm);
			}
			\draw (2)--(3);
			\draw (2)--(5);
			\draw (3)--(5);
			\draw (3)--(7);
			\end{tikzpicture}
		\end{subfigure}
		\\
		$\Gamma_5$	& & $\Gamma_6$ 
	\end{tabular}
\caption{}\label{Fig2Graphs}\end{figure}

So to finish the proof of \Cref{T4}, it remains to prove that if $\Gamma$ is the GK-graph of a finite solvable \cut group then $\Gamma\ne \Gamma_5$ and $\Gamma\ne \Gamma_6$. Let $G$ be a finite solvable \cut group of minimal order such that $\Gamma=\GK(G)$ is $\Gamma_5$ or $\Gamma_6$. Then, using \Cref{MANS} and arguing as in the proof of \Cref{4Vertices3Edges}, we have that $\F(G)$ is a Sylow $p$-subgroup of $G$, which is elementary abelian, with $p=5$ or $7$. 

\textbf{Case(1):} $\Gamma=\Gamma_5$ and $p=5$.

By \Cref{CyclicSylow}, $G_3 \simeq C_3$, and thus by \Cref{G2G3Exclusions}, $G$ cannot have an element of order $3\cdot 7$, a contradiction.

\textbf{Case(2):} $\Gamma=\Gamma_5$ and $p=7$.

By \Cref{CyclicSylow}, $G_5$ is cyclic of order $5$ generated by $x$, say.
Let $H$ be a Hall $\{3,5\}$-subgroup containing $x$. 
	By \Cref{PrimePower}, $x$ is rational and hence $|B_G(x)|=4$. Moreover, as $G$ has no elements of order $3\cdot 5$, we have that $\C_G(x)$ is a $3'$-group, and {therefore} so is $\N_G(\GEN{x})$. Thus $\F(H)$ is a $5'$-group. Hence, $\F(H)$ is a $3$-group and the second fitting layer of $H$, i.e., $\F_2(H)/\F(H)$, is a $5$-group, and in particular isomorphic to $C_5$. Again, because $\N_G(G_5)$ is a $3'$ group, $\ell_\F(H)=2$, therefore, $H\simeq G_3\rtimes C_5$. Now, let $K=\GEN{H,G_7}$, a Hall $\{3,5,7\}$-subgroup of $G$ containing $H$.  Then, $N=\GEN{G_3,G_7}$ is a Hall $\{3,7\}$-subgroup of $G$, which is normal in $K$, and $\C_K(n)\subseteq N$ for every $n\in N\setminus \{1\}$, because $G$ has no elements of order $3\cdot 5$ or $5\cdot 7$. Thus, $K$ is a Frobenius group with kernel $N$. In view of  \cite[Theorem 11.4.3]{JdR1}, $N$ must be nilpotent, implying that the elements of order $7$ are not inverse semi-rational in $N$ \cite[Theorem 1]{BMP17}, and therefore also not inverse semi-rational in $G$, a contradiction.

\textbf{Case(3):} $\Gamma=\Gamma_6$ and $p=5$.

By \Cref{CyclicSylow}, $G_7\simeq C_7 =\GEN{x}$, say. As $x$ is inverse semi-rational,  $B_G(x)$ is of order $3$ or $6$. 
As there is no element of order $2\cdot 7$ in $G$, $\C_G(x)$ does not have an element of order $2$. Hence, $4\nmid |\N_{G}(\GEN{x})|$.

Let $H$ be a Hall ${\{2,7\}}$-subgroup containing $x$.
Since the elements of order $5$ are inverse semi-rational in $G$, $H$ should have elements of order $4$ and this along with the fact that $G$ does not have elements of order $2\cdot 7$, yields that $F(H)$ is a $2$-subgroup. Also, $\F_2(H)/\F(H)$ has order $7$ and index at most $2$ in $H/\F(H)$, as $4\nmid |\N_{G}(\GEN{x})|$. Therefore $\F(H)$ has index at most $2$ in a Sylow $2$-subgroup of $G$ and $H=\F(H)\rtimes (\GEN{x}\rtimes C_2)$ or $H = \F(H)\rtimes\GEN{x}$.

Consider $K=\langle G_5,\F(H),x\rangle$ and $N=\langle G_5,\F(H)\rangle$. Clearly $K=N\rtimes \GEN{x}$ and, as $G$ has elements of order {neither}  $2\cdot 7$ nor $5\cdot 7$, it follows that $\C_K(n)\subseteq N$ for every $n \in N \setminus \{1\}$. This proves that $K$ is a Frobenius group with kernel $N$, so that $N$ must be nilpotent. However if $g\in G$ has order $5$ then $g^h=g^2$ for some $h\in G$, since $g$ is rational in $G$. Then $g,h^2\in N$ and $[g,h^2]=g^{-1}g^{h^2}=g^{-2}\ne 1$, contradicting the nilpotency of $N$. 

\textbf{Case(4):} $\Gamma=\Gamma_6$ and $p=7$.

By \Cref{CyclicSylow},  $G_5 \simeq C_5$ and $G_2$ is either $C_4$ or $Q_8$, but then by \Cref{G2G3Exclusions}, $G$ has no elements of order $2\cdot 5$, a contradiction.  \hfill  $\Box$\\

\section{Applications: Frobenius, $2$-Frobenius, supersolvable and metanilpotent \cut groups}\label{SectionApplications}

In this section we show how one can apply the previous results and techniques to obtain a complete description of the GK-graphs 
	 realized by interesting {sub}classes of solvable \cut groups.
	 In particular, we demonstrate that the GK-graphs that can appear are the same for classes of groups that are notably different.

We begin with the following observation.

	\begin{proposition}\label{Nilpotent} 	The following are equivalent for a graph $\Gamma$.	
	\begin{enumerate}
		\item $\Gamma=\GK(G)$ for some finite non-trivial cyclic \cut group $G$. 			
		\item $\Gamma=\GK(G)$ for some finite non-trivial abelian \cut group $G$. 			
		\item $\Gamma=\GK(G)$ for some finite non-trivial nilpotent \cut group $G$. 			
		\item $\Gamma$ is one of the graphs (a), (b) or (d) in \Cref{Less4}.
	\end{enumerate}
Furthermore, $\Gamma$ is the {GK-}graph of a finite non-trivial nilpotent rational group if and only if $\Gamma$ is graph~$(a)$.
\end{proposition}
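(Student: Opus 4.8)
The plan is to prove the equivalences via the cycle $(1)\Rightarrow(2)\Rightarrow(3)\Rightarrow(4)\Rightarrow(1)$ and then treat the rational addendum separately. The implications $(1)\Rightarrow(2)\Rightarrow(3)$ are immediate, as cyclic groups are abelian and abelian groups are nilpotent. For $(4)\Rightarrow(1)$ I would exhibit explicit cyclic \cut groups: $C_2$ (rational, hence \cut) realizes~(a), $C_3$ realizes~(b), and $C_6$, having exponent dividing $6$ and thus being \cut, realizes~(d) since it contains an element of order $6=2\cdot 3$. So the substance of the proposition is the implication $(3)\Rightarrow(4)$.

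The decisive structural remark is the following. For a nilpotent group $G=\prod_p G_p$ the GK-graph is the \emph{complete} graph on $\pi(G)$: given distinct $p,q\in\pi(G)$, a product of commuting elements of orders $p$ and $q$ (one from $G_p$, one from $G_q$) has order $pq$, so $p-q$ is an edge. Furthermore, if $g$ is a $p$-element then $\N_G(\GEN{g})$ and $\C_G(g)$ each decompose along the Sylow factors, whence $B_G(g)=\N_G(\GEN{g})/\C_G(g)=B_{G_p}(g)$ is a section of the $p$-group $G_p$, hence itself a $p$-group, while at the same time $B_G(g)$ embeds into the cyclic group $\Aut(\GEN{g})$. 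This is the single point where nilpotency is genuinely used, and every restriction on $\pi(G)$ will flow from it; I expect it to be the only real obstacle, the remainder being bookkeeping.

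For $(3)\Rightarrow(4)$, let $G$ be a nilpotent \cut group; by \Cref{2357} we have $\pi(G)\subseteq\{2,3,5,7\}$. If $5\in\pi(G)$, pick $g$ of order $5$; by \Cref{PrimePower}\,(1) inverse semi-rationality forces $B_G(g)$ to contain an element of order $4$, impossible for a $5$-group. If $7\in\pi(G)$, an element $g$ of order $7$ would, by \Cref{PrimePower}\,(2), force $B_G(g)$ to contain an element of order $3$, impossible for a $7$-group. Hence $\pi(G)\subseteq\{2,3\}$, and since the GK-graph is complete on the non-empty set $\pi(G)$ it is~(a), (b) or~(d) according as $\pi(G)$ equals $\{2\}$, $\{3\}$ or $\{2,3\}$.

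For the rational addendum, $C_2$ already realizes~(a) by a cyclic rational group. Conversely, a non-trivial nilpotent rational group is \cut, so by the above its graph is~(a), (b) or~(d); it remains to exclude $3\in\pi(G)$. But if $g$ has order $3$, rationality requires $|B_G(g)|=\varphi(3)=2$, whereas $B_G(g)$ is a $3$-group and hence trivial, a contradiction. Therefore $\pi(G)=\{2\}$ and $\GK(G)$ is graph~(a).
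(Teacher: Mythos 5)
Your proof is correct. A point of comparison is somewhat moot here, because the paper states this proposition as an ``observation'' and supplies no proof at all: implicitly it leans on the realizations in \Cref{PGroups} and on the classification of nilpotent \cut groups in \cite[Theorem~1]{BMP17}, which the paper invokes elsewhere (in Case~(1) of the proof of \Cref{T4}) precisely to say that a \cut group of order divisible by $5$ or $7$ cannot be nilpotent. Your write-up replaces that external citation by a short self-contained derivation: for a $p$-element $g$ of a nilpotent group, the factorization $G=G_p\times G_{p'}$ gives $B_G(g)=B_{G_p}(g)$, a $p$-group, and then \Cref{PrimePower} excludes $5,7\in\pi(G)$ because inverse semi-rationality would force $B_G(g)$ to contain an element of order $4$, respectively $3$; combined with \Cref{2357} and the completeness of the GK-graph of a nilpotent group this yields $(3)\Rightarrow(4)$, and the same mechanism settles the rational addendum. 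This buys independence from \cite{BMP17} at essentially no cost. You also supply something the paper's own data does not: item~(1) requires \emph{cyclic} realizations, and \Cref{PGroups} realizes graph~(d) only by $S_3\times C_2$, which is not even nilpotent, so your $C_6$ (a \cut group, having exponent $6$) is genuinely needed there. One trifling imprecision: $\Aut(\GEN{g})$ need not be cyclic for $2$-elements of order at least $8$, but since you invoke this only for odd $p$, nothing in the argument is affected.
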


\subsection*{Frobenius and 2-Frobenius \cut groups.} It is known that 2-Frobenius groups are solvable and Frobenius \cut groups are solvable (see \Cref{GKLNS} and \cite[Proposition~4.1]{Bac}).
By \Cref{ConnectedComponents}, the GK-graph of a solvable group is disconnected if and only if the group is a Frobenius group or a $2$-Frobenius group. The 5 disconnected graphs in Theorems~\ref{TLess4} and \ref{T4}, namely (c), (e), (g), (h) and (l) are realized by Frobenius \cut groups in \Cref{PGroups}. Similarly, the two disconnected GK-graphs in \Cref{PrimeGraphRat} can be realized by rational Frobenius groups. Consequently, we have the following:
\begin{proposition}\label{Frobeniuscut} The following are equivalent for a graph $\Gamma$.
		\begin{enumerate}
			\item $\Gamma=\GK(G)$ for some finite non-trivial Frobenius \cut group $G$. 			
			\item $\Gamma$ is one of the graphs (c), (e), (g), (h) or (l) in \Cref{Less4}.
		\end{enumerate}
		Furthermore, $\Gamma$ is the graph of a finite non-trivial Frobenius rational group if and only if $\Gamma=(c)$ or $(e)$.
	\end{proposition}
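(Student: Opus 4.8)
The plan is to derive both equivalences from the classification of Frobenius \cut groups in \Cref{FrobeniusCut}, exploiting the fact that the GK-graph of a Frobenius group decomposes as a disjoint union. The implication (2)$\Rightarrow$(1), together with the ``if'' part of the final assertion, is already contained in \Cref{PGroups}: there the graphs (c), (e), (g), (h) and (l) are realized by Frobenius \cut groups, and among these $S_3 = C_3 \rtimes_\Frob C_2$ and $C_5^2 \rtimes_\Frob Q_8$ are moreover rational and realize (c) and (e). Thus only the forward implications require work.

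For (1)$\Rightarrow$(2) I would use, exactly as in the proof of \Cref{PGroups}, that for a Frobenius group $G = N \rtimes_\Frob H$ every element order divides $|N|$ or $|H|$ and that $\gcd(|N|,|H|)=1$; consequently $\GK(G)$ is the disjoint union of $\GK(N)$ and $\GK(H)$, with no edge joining a divisor of $|N|$ to a divisor of $|H|$. In every entry of \Cref{FrobeniusCut} the kernel $N$ is a $p$-group, so $\GK(N)$ is a single vertex $p \in \{3,5,7\}$, and it remains only to read off $\GK(H)$ for each complement. Running through the list, the complements $C_2$, $C_4$ and $Q_8$ are $2$-groups and contribute the single vertex $2$; the complement $C_3$ contributes the single vertex $3$; and each of $C_6$, $Q_8 \times C_3$, $C_3 \rtimes C_4$ and $\SL(2,3)$ contains an element of order $6$ and hence contributes the edge $2-3$. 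Assembling each kernel vertex with the corresponding complement graph yields, in every case, exactly one of (c), (e), (g), (h), (l), which proves (1)$\Rightarrow$(2).

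It then remains to establish the ``only if'' part of the final statement. A rational Frobenius group is in particular a \cut Frobenius group, hence solvable by \cite[Proposition~4.1]{Bac}, so Gow's theorem \cite{Gow} yields $\pi(G) \subseteq \{2,3,5\}$; this at once rules out (g) and (l), both of which contain the vertex $7$. Graph (h) is excluded as follows: by the case analysis above, the only groups in \Cref{FrobeniusCut} whose GK-graph is (h) are $C_5^2 \rtimes_\Frob (C_3 \rtimes C_4)$ and $C_5^2 \rtimes_\Frob \SL(2,3)$, and each has $C_5^2$ as a normal subgroup with quotient $C_3 \rtimes C_4 \cong \mathrm{Dic}_3$, respectively $\SL(2,3)$. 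Neither quotient is rational (in $\mathrm{Dic}_3$ the elements of order $4$ are not conjugate to their inverses, and in $\SL(2,3)$ the elements of order $3$ are not), so by \Cref{ElementaryDP}, which guarantees that epimorphic images of rational groups are rational, neither Frobenius group can be rational. Hence (h) cannot occur, leaving only (c) and (e), which are realized by rational Frobenius groups as recorded above.

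The bulk of the argument is routine finite bookkeeping over \Cref{FrobeniusCut}, made painless by the disjoint-union structure of the GK-graph of a Frobenius group; the only genuinely delicate point is the exclusion of (h) in the rational case, which I expect to be the main obstacle and which hinges on the non-rationality of the complements $\mathrm{Dic}_3$ and $\SL(2,3)$ together with the inheritance of rationality by quotients.
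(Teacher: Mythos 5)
Your proof is correct, but it takes a genuinely different route from the paper's. The paper disposes of the whole proposition in two lines: Frobenius \cut groups are solvable, so by \Cref{ConnectedComponents} their GK-graphs are disconnected; the disconnected graphs among those admissible by Theorems~\ref{TLess4} and~\ref{T4} are exactly (c), (e), (g), (h), (l) (every admissible graph with four vertices being connected), and these are realized by Frobenius groups in \Cref{PGroups}; for the rational case it intersects in the same way with the list of \Cref{PrimeGraphRat}, whose only disconnected members are (c) and (e). You instead bypass the global classification theorems entirely and re-derive (1)$\Rightarrow$(2) by enumerating the classification of Frobenius \cut groups (\Cref{FrobeniusCut}) and computing each GK-graph from the disjoint-union structure $\GK(N\rtimes_\Frob H)=\GK(N)\sqcup\GK(H)$; for the rational direction you use Gow's theorem to kill the vertex $7$ and exclude (h) by observing that the only candidates, $C_5^2\rtimes(C_3\rtimes C_4)$ and $C_5^2\rtimes\SL(2,3)$, have non-rational quotients $C_3\rtimes C_4$ and $\SL(2,3)$, contradicting \Cref{ElementaryDP}(3). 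What your approach buys is independence from Theorems~\ref{TLess4}, \ref{T4} and \ref{PrimeGraphRat} (so the proposition could be stated and proved right after \Cref{FrobeniusCut}) and finer information, namely which groups in the list realize which graph; what the paper's approach buys is brevity and robustness, since it never has to inspect the list case by case. One small inaccuracy in your bookkeeping: you assert that in every entry of \Cref{FrobeniusCut} the kernel contributes a vertex $p\in\{3,5,7\}$, but in case (3) the kernel may be a $2$-group (an extension of two abelian groups of exponent dividing $4$), so the kernel vertex can be $2$; since the complement there is $C_3$, the resulting graph is again (c), and your conclusion is unaffected — but the claim as written should be amended.
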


We now classify the GK-graphs realized by a $2$-Frobenius \cut group.

\begin{proposition}\label{2Frobeniuscut} The following are equivalent for a graph $\Gamma$.
	\begin{enumerate}
		\item $\Gamma=\GK(G)$ for some finite non-trivial $2$-Frobenius \cut group $G$. 			
		\item $\Gamma$ is one of the graphs (c), (e), (g) or (l) in \Cref{Less4}.
	\end{enumerate}
Furthermore, $\Gamma$ is the graph of a finite non-trivial $2$-Frobenius rational group if and only if $\Gamma$  is graph $(c)$.
	 \end{proposition}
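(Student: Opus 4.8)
The plan is to read off the shape of $\GK(G)$ for a $2$-Frobenius \cut group $G$ from its layered structure, and then to discard the single disconnected graph that survives the crude bounds. Since $2$-Frobenius groups are solvable, \Cref{ConnectedComponents} shows that $\GK(G)$ has exactly two connected components, and \Cref{Frobenius3Vertices} bounds $|\pi(G)|$ by $3$. By \Cref{GKLNS} the quotient $G/\F(G)$ is Frobenius with cyclic kernel $\F_2(G)/\F(G)$ of odd order and cyclic complement $G/\F_2(G)$; being \cut as an epimorphic image of $G$, \Cref{FrobeniusCut} forces it to be one of $S_3$, $C_5\rtimes C_4$, $C_7\rtimes C_3$ or $C_7\rtimes C_6$, exactly as in the proof of \Cref{Frobenius3Vertices}. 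In each case $\F_2(G)/\F(G)$ is a $q$-group for a single odd prime $q\in\{3,5,7\}$, so one component of $\GK(G)$ is the single vertex $q$. Intersecting with the disconnected graphs on at most three vertices from \Cref{TLess4} leaves only $(c)$, $(e)$, $(g)$, $(h)$ and $(l)$. For $(2)\Rightarrow(1)$ I would exhibit explicit $2$-Frobenius \cut witnesses: $S_4=C_2^2\rtimes(C_3\rtimes C_2)$ (rational, with $\F(S_4)=C_2^2$ and $\F_2(S_4)=A_4$) realises $(c)$, while $(e)$, $(g)$ and $(l)$ are realised by layered groups $\F(G)\rtimes(G/\F(G))$ in which the kernel above acts fixed-point-freely on a suitable elementary abelian Fitting subgroup, the \cut property being confirmed through \Cref{PrimePower}.

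The heart of the proof, and the principal obstacle, is the exclusion of $(h)$. Here the two components are $\{2,3\}$, which carries an edge and hence an element of order $6$, and the single vertex $\{5\}$; thus $\F_2(G)/\F(G)$ is a $5$-group, forcing $G/\F(G)\simeq C_5\rtimes C_4$, $G/\F_2(G)\simeq C_4$ and $3\mid|\F(G)|$. I would fix an order-$4$ element $h$ whose image generates $G/\F_2(G)$. Its image in $G/\F(G)\simeq C_5\rtimes C_4$ generates a Frobenius complement and is therefore self-normalising; reducing modulo $\F(G)$ this yields $\N_G(\GEN{h})=\C_{\F(G)}(h)\GEN{h}$, a group centralising $h$, so that $B_G(h)=1$ and $h$ is never inverted in $G$. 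On the other hand $C_5=\F_2(G)/\F(G)$ acts fixed-point-freely on the nontrivial $3$-group $\O_3(G)$, hence also on the $G/\F(G)$-module $\Z(\O_3(G))$, which therefore has no trivial $C_5$-constituent and is a sum of copies of the faithful $4$-dimensional $\FF_3[C_5\rtimes C_4]$-module. On each such constituent $h$ acts with characteristic polynomial $x^4-1=(x-1)(x+1)(x^2+1)$ over $\FF_3$, so it fixes a nonzero vector; this produces a central element $z$ of order $3$ with $[h,z]=1$, whence $w:=hz$ has order $12$.

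The contradiction then comes from testing whether $w$ is inverse semi-rational. Its generator $w^{7}=h^{-1}z$ would have to be conjugate to $w=hz$ or to $w^{-1}=h^{-1}z^{-1}$. Conjugacy to $w$ induces the automorphism of $\GEN{w}\simeq C_{12}$ that inverts the $2$-part and fixes the $3$-part, i.e.\ it inverts $h$, which is impossible as $B_G(h)=1$; conjugacy to $w^{-1}$ fixes $h$ and inverts $z$, requiring an element of $\C_G(h)=\C_{\F(G)}(h)\times\GEN{h}$ that inverts $z$, impossible since $z\in\Z(\F(G))$ is centralised by all of $\F(G)$ and by $h$. Hence $w$ is not inverse semi-rational, contradicting that $G$ is \cut, and $(h)$ is eliminated. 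I expect this order-$12$ step to be the delicate point precisely because it is what separates $(h)$ from the admissible graphs $(e)$ and $(l)$: for $(e)$ there is no prime $3$ in $\F(G)$ and hence no such $z$, while for $(l)$ one has $G/\F_2(G)\simeq C_3$ or $C_6$ rather than $C_4$, so the relevant order-$4$ element no longer generates a self-normalising complement and may be inverted.

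Finally, the rational statement follows by the same scheme but more cheaply. For a finite solvable rational group Gow's theorem gives $\pi(G)\subseteq\{2,3,5\}$, and by \cite[Lemma~4]{DIM} the order of a rational $2$-Frobenius group is prime to $5$, so $\pi(G)\subseteq\{2,3\}$. Then the single-vertex component must be $\{3\}$, forcing $G/\F(G)\simeq S_3$ with $\F(G)$ a $2$-group on which the kernel $C_3$ acts fixed-point-freely; as such an action admits no element of order $6$, one obtains $\GK(G)=(c)$, which is realised rationally by $S_4$.
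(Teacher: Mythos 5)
Your reduction of the candidate graphs to (c), (e), (g), (h), (l) is correct and matches the paper, and your strategy for excluding (h) is genuinely different from the paper's: you exploit self-normalization of the Frobenius complement in $G/\F(G)\simeq C_5\rtimes_\Frob C_4$ to force $B_G(h)=1$ and then show an order-$12$ element cannot be inverse semi-rational, whereas the paper passes to $G/\langle \O_2(G),\Phi(\O_3(G))\rangle$, identifies $\F(G)$ with a single copy of the faithful $4$-dimensional $\FF_3[C_5\rtimes_\Frob C_4]$-module, and invokes \Cref{G2G3Exclusions}\eqref{Not4pG2Cyclic}. However, your argument has a gap at its very first step: you \emph{fix an order-$4$ element $h$ whose image generates $G/\F_2(G)$} without proving such an element exists. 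The Sylow $2$-subgroup $P$ of $G$ is an extension of $C_4$ by $\O_2(G)$, and nothing you have established prevents every element of $P$ lying over a generator of $P/\O_2(G)\simeq C_4$ from having order at least $8$; this configuration does occur in $2$-groups (for instance in the modular group of order $16$ over a suitable normal $C_2\times C_2$), and \Cref{GpCyclic} rules it out only when $P$ is abelian. Since your entire normalizer computation rests on $|h|=4$ together with $\GEN{h}\cap\F(G)=1$, this existence must be argued. The gap is repairable: either first factor out $\O_2(G)$ (quotients of \cut groups are \cut, and one checks the layered structure survives), after which the Sylow $2$-subgroup has order exactly $4$ and surjects onto $G/\F_2(G)$, hence is $C_4$; or run your computation for a $2$-element $h$ of arbitrary order $2^k$ over a generator, note that $[h,x]\in\GEN{h}\cap\F(G)=\GEN{h^4}$ forces every automorphism of $\GEN{h}$ induced by $\N_G(\GEN{h})$ to be congruent to $1$ modulo $4$, and conclude $|B_G(hz)|\leqslant 2^{k-2}<2^{k-1}=\varphi(3\cdot 2^k)/2$. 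But as written the step is missing. (Minor: $\Z(\O_3(G))$ need not be elementary abelian, so you should pass to $\Omega_1(\Z(\O_3(G)))$ before speaking of $\FF_3$-modules, and your $z$ is central in $\O_3(G)$, not in $G$.)

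There is a second genuine gap in the realization direction $(2)\Rightarrow(1)$. You assert that witnesses for (e), (g) and (l) are ``layered groups'' whose \cut property is ``confirmed through \Cref{PrimePower}''. But \Cref{PrimePower} only applies to elements of order $p^n$ or $2p^n$ for odd $p$, and the critical elements in these groups have orders to which it says nothing: the natural witness $C_2^4\rtimes(C_5\rtimes_\Frob C_4)$ for (e) contains elements of order $8$, and the witness $C_2^6\rtimes(C_7\rtimes_\Frob C_6)$ for (l) contains elements of order $12$. Verifying that precisely these elements are inverse semi-rational is where the paper invests most of its effort in this proposition -- an explicit conjugacy-class computation (counting the class of an order-$12$ element inside a Hall $7'$-subgroup and showing $g\sim g^7$) -- and no cited lemma substitutes for it. Without pinning down the witnesses and carrying out this verification, the equivalence of (1) and (2) is not established. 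Your treatment of the rational statement, by contrast, is complete and agrees with the paper's (Gow plus \cite[Lemma 4]{DIM}, with $S_4$ realizing (c)).
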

\begin{proof} By Theorems~\ref{TLess4} and \ref{T4}, to prove the first statement it suffices to prove that the graph (h) cannot be realized as the GK-graph of a $2$-Frobenius \cut group and  that each of the graphs (c), (e), (g) and (l) is the GK-graph of a $2$-Frobenius \cut group. 
	
We begin by constructing a 2-Frobenius \cut group that realizes the graph (l) = (2-3   7). For this, observe that the Frobenius \cut group $H = C_7 \rtimes_\Frob C_6$ can be realized as a permutation group \linebreak $\langle (1,2,3,4,5,6,7), (1,3,2,6,4,5)\rangle$. Writing this permutation group as $7 \times 7$-permutation matrices and reading modulo the obvious fixed subspace of all elements with coordinate sum $0$ gives this group  as 
group of $6 \times 6$-matrices 
\[\left\langle A = \begin{pmatrix}
 . & . & .& . & . & -1 \\
 1 & . & .& . & . & -1 \\
 . & 1 & .& . & . & -1 \\
 . & . & 1& . & . & -1 \\
 . & . & .& 1 & . & -1 \\
 . & . & .& . & 1 & -1 
 \end{pmatrix}, ~
B = \begin{pmatrix}
 . & . & .& . & 1 & . \\
 . & . & 1& . & . & . \\
 1 & . & .& . & . & . \\
 . & . & .& . & . & 1 \\
 . & . & .& 1 & . & . \\
 . & 1 & .& . & . & . 
 \end{pmatrix}
 \right\rangle \]
(dots indicate zeros). Reducing these matrices modulo $2$ gives a $6$-dimensional faithful representation of $H$ over $\mathbb{F}_2$. 

We can hence form the semi-direct product $G = C_2^6 \rtimes (C_7 \rtimes_\Frob C_6)$, where we identify $C_2^6$ as a $6$-dimensional $\FF_2$-vector space and $C_7 \rtimes_\Frob C_6$ acts by the above representation. Observe that $A$ is the companion matrix of the $7$th cyclotomic polynomial. 
Hence $\F_2(G) = C_2^6 \rtimes_\Frob C_7$ is a Frobenius group and $G$ is a $2$-Frobenius group with the desired GK-graph (2-3   7).  

To verify that $G$ is \cut observe first that every element of $G$ of order different from $7$ has order dividing $12$.
Let $a$ and $b$ be generators of $H$ of order $7$ and $6$, respectively. It is clear that $a$ is rational in $H$, so to prove that $G$ is \cut we only have to show that its elements of order $12$ are inverse semi-rational in $G$. 
Let $g$ be an element of order $12$ in $G$ and consider the Hall $7'$-subgroup $K = N\rtimes C_6$, where $N = \F(G) \simeq C_2^6$.
By Hall's Theorem, $g$ is conjugate to an element in $K$ and therefore we may assume that $g\in K$.
The center of $K$ is generated by the element $f$ of $N$ without any trivial entry. 
Let $X$ be the subset of $N$ formed by the elements with an odd number of trivial entries. 
Then $g=xb$ or $g=xb^{-1}$ with $x\in X$ and we may assume without loss of generality that $g=xb$. 
We claim that $\C_K(g)=\GEN{g}$. Indeed, let $h\in \C_K(g)$ and write $h=yb^i$ with $y\in C_2^6$ and $0\leqslant i\leqslant 5$. 
Then $g^ih^{-1}\in \C_K(g)\cap N=\Z(K)=\GEN{f}$,
so that $h$ is either $g^i$ or $fg^i=g^{6+i}$. This finishes the proof of the claim.
Hence the conjugacy class of $g$ in $K$ has cardinality $|K|/12 = 32$. 
The set $Xb=\{yb:y\in X\}$ is closed under conjugation  in $K$, so it contains the conjugacy class of $g$ in $H$.
Moreover $|Xb|=32$  and hence $Xb$ is exactly the conjugacy class of $g$ in $K$. 
As $g^7=fg\in Xb$, it follows that $g$ and $g^7$ are conjugate in $K$ and hence $g$ is inverse semi-rational both in $K$ and $G$, as desired.

In the same spirit one can define  $2$-Frobenius \cut groups $C_2^2 \rtimes (C_3 \rtimes_\Frob C_2), C_2^4 \rtimes (C_5 \rtimes_\Frob C_4)$ and $C_3^6 \rtimes (C_7 \rtimes_\Frob C_3)$ realizing the graphs (c) = (2   3), (e) = (2   5) and (g) = (3  7), respectively, using the following matrix realizations of the upper Frobenius groups
\[
 \left\langle C = \begin{pmatrix}
 . & -1 \\
 1 & -1
 \end{pmatrix}, D =
\begin{pmatrix}
 . & 1 \\
 1 & .
 \end{pmatrix}
 \right\rangle, \qquad 
 \left\langle E = \begin{pmatrix}
 .& . & . & -1 \\
 1& . & . & -1 \\
 .& 1 & . & -1 \\
. & . & 1 & -1 
 \end{pmatrix}, 
F = \begin{pmatrix}
 .& . & 1 & . \\
 1& . & . & . \\
 .& . & . & 1 \\
 .& 1 & . & . 
 \end{pmatrix}
 \right\rangle, \qquad
  \left\langle A, B^2 \right\rangle,
\] respectively. 
Note that the group $C_2^2 \rtimes (C_3 \rtimes_\Frob C_2)$ constructed is isomorphic to the symmetric group $S_4$.

It remains to prove that there is no $2$-Frobenius \cut group realizing the GK-graph (h)=(2-3   5).  Assume that $G$ is a $2$-Frobenius \cut group realizing that graph. Then, by \Cref{GKLNS} and \Cref{FrobeniusCut}, $G/\F(G) \simeq C_5 \rtimes_\Frob C_4$. Replacing $G$ by $G/\langle \O_2(G), \Phi(\O_3(G)) \rangle$ (where $\Phi$ denotes the Frattini subgroup), if necessary, we may assume that $\F(G)$ is an elementary abelian $3$-group. Since $\F(G)$ and $G/\F(G)$ are coprime, $\F(G)$ considered as an $\FF_3[C_5 \rtimes_\Frob C_4]$-module is semi-simple and faithful. We may assume that it is a single copy of the unique faithful $\FF_3[C_5 \rtimes_\Frob C_4]$-module (where the actions of elements $e$ and $f$ of order $5$ and $4$, respectively, are defined by the matrices $E$ and $F$ above). Now $v = (1,1,1,1) \in \FF_3^4$ is an element of order $3$ and commutes with the element $f$ and hence $x = vf$ has order $12$. Since $G_2 \simeq C_4$, it follows from \Cref{G2G3Exclusions}\eqref{Not4pG2Cyclic} that $x$ is not inverse semi-rational in $G$ and $G$ is not a \cut group.

Furthermore, since the graph $(2  ~~5)$ cannot be the GK-graph of a rational $2$-Frobenius group, by \cite[Lemma 4]{DIM}, $ (2  ~~ 3)$ is the only graph that can be realized by a rational $2$-Frobenius group. \end{proof}

\subsection*{Supersolvable groups} 
Note that supersolvable groups are nilpotent-by-abelian \cite[5.4.10]{Robinson}. We begin by restricting the set of possible vertices
of the GK-graph of finite nilpotent-by-abelian groups and we will see 
that the sets of vertices are the same for supersolvable groups.

\begin{lemma}\label{lem:primespec_supersolvable} Let $G$ be a finite nilpotent-by-abelian group.
 \begin{enumerate}
  \item If $G$ is rational, then $\pi(G) \subseteq \{2,3\}$.
  \item If $G$ is \cut, then $\pi(G) \subseteq \{2,3,5\}$ or $\pi(G)  \subseteq \{2,3,7\}$.
 \end{enumerate}
\end{lemma}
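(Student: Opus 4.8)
The plan is to reduce both statements to a single structural fact: for a nilpotent-by-abelian group the relative automorphism groups $B_G(g)=\N_G(\GEN{g})/\C_G(g)$ attached to $5$- and $7$-elements are controlled by the abelianization $G/G'$, and then to play this off against the character-field descriptions of rational and \cut groups recalled in \Cref{SubsectionRationalCut}. First I would record the standing facts. Since $G$ is nilpotent-by-abelian, $G'$ is nilpotent and $G$ is solvable, so \Cref{2357} gives $\pi(G)\subseteq\{2,3,5,7\}$ in the \cut case, while Gow's theorem \cite{Gow} gives $\pi(G)\subseteq\{2,3,5\}$ in the rational case. Next I would prove the locating lemma: every element $g$ of order $5$ (resp.\ $7$) lies in $G'$. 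Indeed, by \Cref{PrimePower} inverse semi-rationality forces some $h\in\N_G(\GEN{g})$ with $g^h=g^a$ where $a\not\equiv 1\pmod{|g|}$ (an element of order $4$ modulo $5$, resp.\ of order $3$ modulo $7$); then $[g,h]=g^{a-1}\in G'$ generates $\GEN{g}$, so $g\in G'$.

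The heart of the argument is the following consequence of the nilpotency of $G'$. If $w\in G'$ is a $\{5,7\}$-element of squarefree order, so that $\Aut(\GEN{w})$ is a $\{2,3\}$-group of order coprime to $|w|$, then $\N_G(\GEN{w})\cap G'\subseteq\C_G(w)$. To see this I would decompose any $n\in\N_G(\GEN{w})\cap G'$ into its primary components in the nilpotent group $G'$: the $\{5,7\}'$-components centralise $w$, while each $5$- or $7$-component normalises $\GEN{w}$ and acts on it with order a power of $5$ or $7$, hence trivially. It follows that $B_G(w)$ is isomorphic to a section of the abelian group $G/G'$.

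For part (1) I apply this with $w=g$ of order $5$: rationality gives $B_G(g)\cong C_4$, so $G/G'$ would admit an element of order $4$. But for a rational group every linear character is $\{\pm1\}$-valued, so $G/G'$ is an elementary abelian $2$-group, a contradiction; hence $5\nmid|G|$ and $\pi(G)\subseteq\{2,3\}$. For part (2), suppose $\{5,7\}\subseteq\pi(G)$. The locating lemma places an element of order $5$ and one of order $7$ inside the nilpotent $G'$; lying in different Sylow subgroups of $G'$ they commute, producing $z$ of order $35$. Applying the section statement to $w=z$ realises $B_G(z)$ as a section of $G/G'$ sitting inside $\Aut(\GEN{z})\cong C_4\times C_6$. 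A short inspection of that group shows that inverse semi-rationality of $z$ (either rational, or with index-$2$ image avoiding the inversion) forces $B_G(z)$ to contain an element of order $12$. On the other hand, the character-field description of \cut groups forces every linear character, hence every element of $G/G'$ and of any section of it, to have order dividing $4$ or $6$; in particular there is no element of order $12$. This contradiction excludes $\{5,7\}\subseteq\pi(G)$, giving $\pi(G)\subseteq\{2,3,5\}$ or $\pi(G)\subseteq\{2,3,7\}$.

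The main obstacle is the section statement $\N_G(\GEN{w})\cap G'\subseteq\C_G(w)$: this is precisely where the nilpotent-by-abelian hypothesis enters, and the verification that the $5$- and $7$-primary components of $G'$ act trivially on $\GEN{w}$ (via coprimality of $|w|$ with $|\Aut(\GEN{w})|$) is the delicate point that must be handled carefully. Everything downstream is the numerical check inside $\Aut(C_{35})\cong C_4\times C_6$ together with the standard character-field descriptions of rational and \cut groups.
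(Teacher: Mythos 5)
Your proof is correct, and its spine coincides with the paper's: in both arguments the decisive step is to realize $B_G(x)$, for a suitable element $x$ of order $5$ (respectively $5\cdot 7$) lying in the nilpotent group $G'$, as a section of the abelian group $G/G'$, and then to contradict the exponent bounds for abelian rational groups (exponent $2$) and abelian \cut groups (exponent dividing $4$ or $6$) — including the identical final computation that inverse semi-rationality of an element of order $35$ forces an element of order $12$ in $B_G(x)\leqslant \Aut(C_{35})\simeq C_4\times C_6$. Where you diverge is in the two supporting steps. The paper gets the relevant primes into $G'$ for free from the exponent bound on $G/G'$ (so $5$, respectively $35$, must divide $|G'|$), and then chooses $x$ in $\Z(G')$, which exists by nilpotency of $G'$; this makes $G'\subseteq \C_G(x)$ immediate, so $B_G(x)$ is a section of $G/G'$ with no further argument. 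You instead prove a locating lemma (an element $g$ of order $5$ or $7$ satisfies $\GEN{g}=\GEN{[g,h]}\subseteq G'$ for a suitable $h\in\N_G(\GEN{g})$ supplied by inverse semi-rationality) and then the normalizer statement $\N_G(\GEN{w})\cap G'\subseteq \C_G(w)$ for an \emph{arbitrary} $\{5,7\}$-element $w\in G'$ of squarefree order, via primary decomposition in $G'$: the $\{5,7\}'$-components of such an $n$ centralize $w$ because coprime-order elements of a nilpotent group commute, and the $5$- and $7$-components act trivially on $\GEN{w}$ by coprimality with $|\Aut(\GEN{w})|$. Both of your lemmas are sound, but they do more work than the paper's centrality trick. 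What your route buys is a somewhat stronger and reusable intermediate statement ($B_G(w)$ is a section of $G/G'$ for every such $w$, not only central ones, and $5$- and $7$-elements of nilpotent-by-abelian \cut groups always lie in $G'$); what the paper's route buys is brevity, since picking $x$ central in $G'$ collapses both of your lemmas into one line.
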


\begin{proof}  (1) Suppose first that $G$ is rational.
By \cite{Gow}, $\pi(G) \subseteq \{2,3,5\}$.  Assume that $5$ divides $|G|$. Since $G/G'$ is an abelian rational group it is an elementary abelian $2$-group. Hence $5$ divides $|G'|$. Pick $x \in \Z(G')$ of order $5$. Then $B_G(x)$ is cyclic of order $4$, but this is impossible since the exponent of $G/G'$ divides $2$.

(2)  Suppose now that $G$ is \cut.  Then by \Cref{2357}, $\pi(G) \subseteq \{2,3,5,7\}$. Assume that $35$ divides $|G|$. Since $G/G'$ is an abelian \cut group, it has exponent divisible by $4$ or $6$. Hence $35$ divides $|G'|$. Pick $x \in \Z(G')$ of order $5\cdot 7$. Then $B_G(x)$ contains an element of order $12$ and this gives a contradiction.\end{proof}

\begin{proposition}\label{RationalSupersolvable}
	The following are equivalent for a graph $\Gamma$.
	\begin{enumerate}
		\item $\Gamma=\GK(G)$ for some finite non-trivial metacyclic rational group $G$. 			
		\item $\Gamma=\GK(G)$ for some finite non-trivial metabelian rational group $G$. 			
		\item $\Gamma=\GK(G)$ for some finite non-trivial supersolvable rational group $G$. 		
		\item $\Gamma=\GK(G)$ for some finite non-trivial nilpotent-by-abelian rational group $G$. 					
		\item $\Gamma$ is one of the graphs (a), (c) or (d) in \Cref{Less4}.
	\end{enumerate}
\end{proposition}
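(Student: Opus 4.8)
The plan is to establish the five-fold equivalence through a short cycle of implications that exploits the inclusions between the four group classes, isolating the only two substantive steps: realizing the graphs (a), (c), (d) by \emph{metacyclic} rational groups, and bounding $\GK(G)$ for an arbitrary \emph{nilpotent-by-abelian} rational group. Concretely, I would prove (5)$\Rightarrow$(1), the four class-inclusion implications (1)$\Rightarrow$(2), (1)$\Rightarrow$(3), (2)$\Rightarrow$(4), (3)$\Rightarrow$(4), and finally (4)$\Rightarrow$(5). Tracing (5)$\Rightarrow$(1)$\Rightarrow$(2)$\Rightarrow$(4)$\Rightarrow$(5) shows that (1), (2), (4), (5) are equivalent, and since (1)$\Rightarrow$(3)$\Rightarrow$(4) sandwiches (3) between two equivalent statements, it joins the class as well. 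The reason the implications must be arranged this way is that the four classes do not form a single chain: every metacyclic group is both metabelian and supersolvable, and each of those is nilpotent-by-abelian (abelian groups being nilpotent, and supersolvable groups being nilpotent-by-abelian by \cite[5.4.10]{Robinson}), but metabelian and supersolvable are mutually incomparable, so both pairs (1)$\Rightarrow$(2),(3) and (2),(3)$\Rightarrow$(4) are genuinely needed.

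The class-inclusion implications are immediate from the inclusions just recalled: a group realizing a graph inside a smaller class realizes it inside every larger class. For (5)$\Rightarrow$(1) I would exhibit explicit metacyclic rational groups. The group $C_2$ realizes graph (a); the symmetric group $S_3 = C_3 \rtimes C_2$ realizes (c), having no element of order $6$; and $S_3 \times C_2 \cong D_{12}$ realizes (d), having an element of order $6$. Their rationality and their GK-graphs are already recorded in \Cref{PGroups}, so this step only adds the remark that each is metacyclic: $C_2$ is cyclic, $S_3$ is cyclic-by-cyclic, and $D_{12}$ has the cyclic normal rotation subgroup $C_6$ of index $2$.

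The heart of the argument is (4)$\Rightarrow$(5). Let $G$ be a non-trivial nilpotent-by-abelian rational group. By \Cref{lem:primespec_supersolvable}(1) we have $\pi(G) \subseteq \{2,3\}$, and since a non-trivial rational group has even order (in a group of odd order no non-identity element is conjugate to its inverse, so rationality forces triviality), $2 \in \pi(G)$. Hence $\pi(G)$ is either $\{2\}$ or $\{2,3\}$. In the first case $\GK(G)$ is the single vertex $2$, namely graph (a); in the second case the vertex set is $\{2,3\}$, and the only two graphs on it are (c) (no edge) and (d) (an edge). Thus $\GK(G)$ is one of (a), (c), (d), closing the cycle.

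I do not expect a serious obstacle here, precisely because the decisive restriction $\pi(G) \subseteq \{2,3\}$ is delivered by the already-proved \Cref{lem:primespec_supersolvable}. The only points demanding any care are (i) verifying that each of the three graphs can be realized inside the \emph{smallest} class, the mildly non-obvious case being that $D_{12}$ is metacyclic for graph (d), and (ii) justifying $2 \in \pi(G)$ from non-triviality together with rationality. Once these are in place, the enumeration of graphs on a vertex set contained in $\{2,3\}$ is forced, so the remaining work is routine bookkeeping rather than a genuine difficulty.
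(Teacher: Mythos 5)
Your proposal is correct and follows essentially the same route as the paper: the identical cycle of implications, with the class inclusions and \Cref{PGroups} handling everything except (4)$\Rightarrow$(5), which both you and the paper settle via \Cref{lem:primespec_supersolvable} together with the fact that a non-trivial rational group has even order. Your only additions—explicitly checking that $C_2$, $S_3$ and $S_3\times C_2$ are metacyclic, justifying $2\in\pi(G)$, and enumerating the graphs on a vertex set inside $\{2,3\}$ directly instead of citing \Cref{TLess4}—are minor elaborations of steps the paper leaves implicit.
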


\begin{proof}
(1) implies (2), (1) implies (3) and (2) implies (4) are clear. 
(3) implies (4) is a well known result \cite[5.4.10]{Robinson}.
(5) implies (1) is a consequence of \Cref{PGroups}. 

Hence, it suffices to prove that (4) implies (5). For this, suppose that $G$ is a non-trivial nilpotent-by-abelian rational group and $\Gamma=\GK(G)$. 
By \Cref{lem:primespec_supersolvable}, $\pi(G)\subseteq \{2,3\}$. 
As $G$ is rational, $2$ is one of the vertices of $\Gamma$ and hence $\Gamma$ is one of the graphs (a), (c) or (d), by \Cref{TLess4}. 
\end{proof}

\begin{proposition}\label{cutSupersolvable}
	The following are equivalent for a graph $\Gamma$.
\begin{enumerate}	
	\item $\Gamma=\GK(G)$ for some finite non-trivial metacyclic \cut group $G$. 				
	\item $\Gamma=\GK(G)$ for some finite non-trivial metabelian \cut group $G$. 			
	\item $\Gamma=\GK(G)$ for some finite non-trivial supersolvable \cut group $G$. 		
	\item $\Gamma=\GK(G)$ for some finite non-trivial nilpotent-by-abelian \cut group $G$. 					
	\item $\Gamma$ is one of the graphs  (a) -- (g) or (j) -- (o) in \Cref{Less4}.
\end{enumerate}
\end{proposition}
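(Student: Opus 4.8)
The plan is to prove all four equivalences by establishing the cycle $(5)\Rightarrow(1)\Rightarrow(2)\Rightarrow(4)\Rightarrow(5)$ together with $(1)\Rightarrow(3)\Rightarrow(4)$. The implications $(1)\Rightarrow(2)$ and $(1)\Rightarrow(3)$ hold because a metacyclic group is both metabelian (its derived subgroup lies in the cyclic normal subgroup, hence is abelian) and supersolvable (refining the cyclic normal subgroup and the cyclic quotient by their characteristic subgroups produces a normal series with cyclic $G$-invariant factors); $(2)\Rightarrow(4)$ is immediate since an abelian group is nilpotent, and $(3)\Rightarrow(4)$ is \cite[5.4.10]{Robinson}. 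Thus only $(5)\Rightarrow(1)$ and $(4)\Rightarrow(5)$ require work.

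For $(5)\Rightarrow(1)$ I would exhibit, for each graph in (a)--(g) and (j)--(o), a metacyclic \cut group realizing it. Several of the groups in \Cref{PGroups} already qualify, e.g.\ $C_2$, $C_3$, $S_3$, $C_7\rtimes_\Frob C_3$ and $C_7\rtimes_\Frob C_6$ for (a), (b), (c), (g), (l). The examples involving the prime $5$ in \Cref{PGroups} are not metacyclic (their $5$-part is $C_5^2$), so I would replace them by groups with cyclic Sylow $5$-subgroup: the Frobenius group $C_5\rtimes_\Frob C_4$ realizes (e), and $C_{15}\rtimes C_4$, with $C_4$ acting faithfully on the $C_5$-part and by inversion on the $C_3$-part, realizes (j). The remaining graphs are obtained from these by direct products with $C_2$, $C_3$ or $C_6$ (for instance $(C_5\rtimes_\Frob C_4)\times C_2$ for (f), $(C_{15}\rtimes C_4)\times C_2$ for (k), and $(C_7\rtimes_\Frob C_6)\times C_2$, $\times C_3$, $\times C_6$ for (m), (n), (o)). Each such product is metacyclic because $5$ (respectively $7$) occurs only to the first power and commutes with the adjoined cyclic factor, so the relevant cyclic subgroup ($C_{10}$, $C_{30}$, or $C_{7k}$) is normal with cyclic quotient; \cut-ness follows from \Cref{FrobeniusCut} and \Cref{ElementaryDP} (with a direct verification for $C_{15}\rtimes C_4$), and the GK-graphs are read off exactly as in the proof of \Cref{PGroups}.

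For $(4)\Rightarrow(5)$, let $G$ be nilpotent-by-abelian and \cut. By \Cref{lem:primespec_supersolvable}(2) we have $\pi(G)\subseteq\{2,3,5\}$ or $\pi(G)\subseteq\{2,3,7\}$, so $G$ has at most three vertices and, by \Cref{TLess4}, $\GK(G)$ is one of (a)--(o). It remains to exclude (h) and (i); both have vertex set $\{2,3,5\}$ and lack the edge $3-5$, so it suffices to prove that if $3,5\in\pi(G)$ then $G$ has an element of order $15$. Since $G$ is nilpotent-by-abelian, $G'$ is nilpotent; as $G/G'$ is an abelian \cut group it has no element of order $5$, so $5\mid |G'|$ and there is $x\in\Z(G')$ of order $5$, whence $G'\subseteq\C_G(x)$. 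If $3\mid|G'|$ then $G'$ already contains a $3$-element commuting with $x$ and we are done, so assume $3\nmid|G'|$ (equivalently, no $3$-element centralises $x$). By \Cref{PrimePower} the element $x$ is rational, so $B_G(x)\cong C_4$; using \Cref{GpCyclic} to control the Sylow $2$-subgroup there is a $2$-element inducing the order-$4$ automorphism of $\GEN{x}$, and its image in the abelian group $G/G'$ has order $4$. Combining it with the image of a $3$-element and lifting through a Hall $\{2,3\}$-subgroup yields an element $g\in G$ of order $12$ whose $3$-part $g_3$ and $2$-part $g_4$ commute, with $g_4^2$ inverting $x$.

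The decisive step is then to contradict the inverse semi-rationality of $g$. An element of order $12$ is inverse semi-rational precisely when some $G$-conjugator $h$ either inverts $g_3$ while fixing $g_4$, or inverts $g_4$ while fixing $g_3$. In the first case $[g_3,h]=g_3$ would lie in $G'$, contradicting $3\nmid|G'|$; in the second case $[g_4,h]=g_4^{2}$ would lie in $G'\subseteq\C_G(x)$, so $g_4^{2}$ would centralise $x$, contradicting that $g_4^{2}$ inverts the order-$5$ element $x$. Either way $g$ is not inverse semi-rational, contradicting that $G$ is \cut, so $G$ has an element of order $15$ and graphs (h), (i) are excluded. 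The main obstacle is exactly this exclusion of (h) and (i): the reduction to $3\nmid|G'|$ is routine, and the two commutator contradictions are short once one isolates the two conjugacy patterns, but the honest technical point is guaranteeing an order-$4$ element that induces the automorphism of $\GEN{x}$ (this is where \Cref{GpCyclic}, and if necessary a minimal-counterexample reduction via \Cref{MANS} to the case where the Sylow $5$-subgroup is a minimal normal subgroup, is used). Note also that no analogous exclusion arises on the $\{2,3,7\}$-side, since a $7$-element is only inverse semi-rational rather than rational and therefore forces a $3$-element acting on it rather than a $4$-element, so that the obstructing order-$12$ element never materialises; this is reflected in the fact that graph (l) does appear in the list.
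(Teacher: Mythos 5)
Your overall skeleton is the paper's: reduce to $(5)\Rightarrow(1)$ and $(4)\Rightarrow(5)$, realize the fourteen graphs by metacyclic \cut groups, and use \Cref{lem:primespec_supersolvable} together with \Cref{TLess4} to reduce $(4)\Rightarrow(5)$ to showing that $3,5\in\pi(G)$ forces an element of order $15$. Your list of examples for $(5)\Rightarrow(1)$ is correct and essentially the paper's (it also uses $C_5\rtimes_\Frob C_4$, its product with $C_2$, and $C_{15}\rtimes C_4$ with $x^y=x^2$; for graph (o) it takes $[C_7\rtimes_\Frob C_3]\times S_3$ where you take $[C_7\rtimes_\Frob C_6]\times C_6$, both of which work). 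The genuine gap is in the exclusion of (h) and (i). After producing the $2$-element $t$ whose image in $G/G'$ has order $4$, you assert that ``combining it with the image of a $3$-element and lifting through a Hall $\{2,3\}$-subgroup yields an element $g\in G$ of order $12$ \dots with $g_4^2$ inverting $x$.'' The commuting of $g_3$ and $g_4$ is automatic (they are powers of $g$), but the condition that $g_4^2$ inverts $x$ --- which is exactly what your Case B contradiction consumes --- is never established, and the tools you cite do not supply it: \Cref{GpCyclic} applies only to abelian Sylow subgroups (it is also not needed to produce $t$: the $2$-part of a preimage of a generator of $B_G(x)\simeq C_4$ does that), and \Cref{MANS} controls the Sylow $5$-subgroup, not the $2$-part. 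Concretely, an element of order $12$ in $G/G'$ only lifts to some $w\in G$ of order $12k$; the power $g=w^{k}$ has order $12$, but its $2$-part is $w_2^{k}$, and if $k$ is even then $g_4^2\in G'$ acts trivially on $x$, so Case B yields no contradiction. Trying instead to power the $2$-part $w_2$ (which does act on $\GEN{x}$ with order $4$) down to order $4$ destroys its action on $x$ as soon as $|w_2|\geqslant 8$, and nothing you proved forces $|w_2|=4$. So the existence of an order-$12$ element in your required configuration is as unproven as the statement you are after.

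The repair is immediate and shows the whole order-$12$ dichotomy is unnecessary: at the moment you have $\bar t$ of order $4$ and, since $3\nmid|G'|$ and $3\in\pi(G)$, an element of order $3$ in $G/G'$, the abelian group $G/G'$ already contains an element of order $12$. But $G/G'$ is \cut by \Cref{ElementaryDP}(3), and in an abelian group an element of order $12$ is never inverse semi-rational, since conjugacy classes are singletons and $g^5\notin\{g,g^{-1}\}$. This contradiction finishes the argument without ever returning to $G$. This is, in essence, what the paper does, phrased with the Fitting subgroup: as $G'\subseteq\F(G)$, the quotient $G/\F(G)$ is an abelian \cut group, so $5\nmid[G:\F(G)]$; a central element of order $5$ in $\F(G)$ is rational by \Cref{PrimePower}, forcing an element of order $4$ in $G/\F(G)$, which in turn forces $3\nmid[G:\F(G)]$; hence $15$ divides $|\F(G)|$ and the nilpotent group $\F(G)$ contains the desired element of order $15$.
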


\begin{proof}
Arguing as in the proof of \Cref{RationalSupersolvable}, it is enough to prove (4) implies (5) and (5) implies (1).

We first show that (4) implies (5). 
 Suppose that $G$ is nilpotent-by-abelian and $\Gamma=\GK(G)$. 
By \Cref{lem:primespec_supersolvable} we have that $\pi(G)\subseteq \{2,3,5\}$ or $\pi(G)\subseteq \{2,3,7\}$. Hence, by \Cref{TLess4}, necessarily $\Gamma$ is one of the graphs (a)-(o) in \Cref{Less4}. 
  It is thus required to prove that $\Gamma$ is neither the graph (h) nor the graph (i) in \Cref{Less4}. To this end, we show that if $3$ and $5$ divide $|G|$, then $G$ necessarily has an element of order $3\cdot 5$.
	Since $G/\F(G)$ is an abelian \cut group, $5$ does not divide $|G/\F(G)|$. 
	 Therefore if $5$ divides the order of $G$ then $\F(G)$ has a central element of order $5$, yielding an element of order $4$ in $G/\F(G)$ and consequently  $3$ does not divide $|G/\F(G)|$. This implies that if $15$ divides $|G|$ then 15 divides $|\F(G)|$ and hence $G$ has an element of order $3\cdot 5$.

We next show that (5) implies (1). 
Note that the groups corresponding to (a) -- (d), (g)  and (l) -- (o) in \Cref{fig:Examples} are metacyclic, and \cut. By \Cref{FrobeniusCut}, there is a unique Frobenius \cut group $F$ of the form $C_5 \rtimes_\Frob C_4$, which is clearly metacyclic and $\GK(F)$ is the graph (e). 
Moreover $\GK(F\times C_2)$ is the graph (f).
Consider now the group $G=\GEN{x}\rtimes \GEN{y}$ with $|x|=15$, $|y|=4$ and $x^y=x^2$. Then $G$ and $G\times C_2$ are metacyclic {\cut groups 
	 whose Gruenberg-Kegel graphs are} the graphs  (j) and (k) respectively.  
\end{proof}

\subsection*{Metanilpotent groups}Recall that a group $G$ is called \emph{metanilpotent} if it contains a nilpotent normal subgroup $N$ such that $G/N$ is nilpotent or, equivalently, if $\ell_\F(G) \leqslant 2$.

\begin{proposition} 
\label{MetanilpotentRational}
The following are equivalent for a graph $\Gamma$.
\begin{enumerate}
	\item $\Gamma=\GK(G)$ for some finite non-trivial abelian-by-nilpotent rational group $G$.  				
	\item $\Gamma=\GK(G)$ for some finite non-trivial metanilpotent rational group $G$.	
	\item $\Gamma$ is one of the graphs (a), (c)-(f) or (k) in \Cref{Less4}.
\end{enumerate}
\end{proposition}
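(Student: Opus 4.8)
The plan is to follow the template already established for the supersolvable and nilpotent-by-abelian classes, where the real content lies in a single implication and the rest is bookkeeping. The implications $(1)\Rightarrow(2)$ and $(3)\Rightarrow(1)$ should both be immediate. For $(1)\Rightarrow(2)$, an abelian-by-nilpotent group is in particular nilpotent-by-nilpotent, hence metanilpotent. For $(3)\Rightarrow(1)$, I would simply invoke \Cref{fig:Examples}: the groups realizing the graphs (a), (c), (d), (e), (f) and (k) there (namely $C_2$, $S_3$, $S_3\times C_2$, $C_5^2\rtimes_\Frob Q_8$, $[C_5^2\rtimes_\Frob Q_8]\times C_2$ and $[C_5^2\rtimes_\Frob Q_8]\times S_3$) are precisely the shaded, i.e.\ rational, entries, and each has an \emph{abelian} Fitting subgroup with nilpotent quotient, so each is abelian-by-nilpotent. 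Thus the only substantive work is the implication $(2)\Rightarrow(3)$.

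For $(2)\Rightarrow(3)$, let $G$ be a non-trivial finite metanilpotent rational group. Since metanilpotent groups are solvable, \Cref{PrimeGraphRat} already pins $\GK(G)$ down to one of the seven graphs (a), (c), (d), (e), (f), (i) or (k). Hence the entire task reduces to excluding graph (i); equivalently, I must show that $G$ cannot simultaneously have $3$ and $5$ in $\pi(G)$ while possessing no element of order $3\cdot 5$, so that whenever $\{3,5\}\subseteq\pi(G)$ the edge $3-5$ is present.

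The key structural observation I would use is that $\ell_\F(G)\leqslant 2$ forces $G/\F(G)$ to be nilpotent, and, being an epimorphic image of a rational group, it is again rational by \Cref{ElementaryDP}. By \Cref{Nilpotent}, a non-trivial nilpotent rational group realizes only graph (a), so $G/\F(G)$ is a $2$-group (the underlying reason being that for an odd prime $p$ an element $x$ of order $p$ in a $p$-group has $B_G(x)$ a $p$-group inside the coprime group $\Aut(\GEN{x})\cong C_{p-1}$, hence trivial, so $x$ is not rational). Consequently any prime of $\{3,5\}$ dividing $|G|$ must already divide $|\F(G)|$. If both $3$ and $5$ divide $|G|$, then both divide the order of the nilpotent group $\F(G)$, whose Sylow $3$- and Sylow $5$-subgroups centralize each other; choosing commuting elements of orders $3$ and $5$ yields an element of order $3\cdot 5$ in $G$. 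This gives $3-5\in\GK(G)$ and rules out graph (i), completing the proof.

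The only genuinely delicate point, which I would state explicitly rather than grind through, is the reduction that a non-trivial nilpotent rational group is a $2$-group; but this is exactly \Cref{Nilpotent}, so no new argument is needed. The remainder is a routine cross-check against \Cref{PrimeGraphRat} and \Cref{fig:Examples}, and I expect the proof to be short, with the nilpotency of $\F(G)$ doing all the heavy lifting in forcing the missing edge $3-5$.
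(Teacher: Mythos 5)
Your proposal is correct and takes essentially the same route as the paper's proof: the easy implications are settled exactly as in the paper via \Cref{fig:Examples}, and for $(2)\Rightarrow(3)$ the paper likewise reduces via \Cref{PrimeGraphRat} to excluding the graph $3-2-5$, observes that $G/\F(G)$ is a nilpotent rational group and hence a $2$-group, and concludes that $3$ and $5$ must both divide $|\F(G)|$, so nilpotency of $\F(G)$ yields the forbidden edge $3-5$. No gaps; your extra parenthetical justification that nilpotent rational groups are $2$-groups is a harmless elaboration of what \Cref{Nilpotent} already provides.
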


\begin{proof} (1) implies (2) is clear. (3) implies (1) follows from the fact that the groups in (a), (c)-(f) and (k) in \Cref{fig:Examples} are abelian-by-nilpotent and rational.
Finally, we prove (2) implies (3). By \Cref{PrimeGraphRat}, it suffices to show that if $G$ is a non-trivial metanilpotent rational group then $\GK(G) \ne (3-2-5)$. 
Assume the contrary. Then $G/\F(G)$ is a rational nilpotent group and hence  a $2$-group. This implies that $\F(G)$ is a nilpotent group of order divisible by $3$ and $5$ and hence $\GK(G)$ contains the edge $3-5$, a contradiction.
\end{proof}

\begin{proposition}
The following are equivalent for a graph $\Gamma$.
\begin{enumerate}
	\item $\Gamma=\GK(G)$ for some finite non-trivial abelian-by-nilpotent \cut group $G$. 					
	\item $\Gamma=\GK(G)$ for some finite non-trivial metanilpotent \cut group $G$. 		
	\item $\Gamma$ is one of the graphs (a)-(g) or (j)-(r) in Figures \ref{Less4} and \ref{Four}. 
\end{enumerate}
\end{proposition}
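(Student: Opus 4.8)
The plan is to prove the three implications in the cycle (1)$\Rightarrow$(2)$\Rightarrow$(3)$\Rightarrow$(1), with essentially all the content in (2)$\Rightarrow$(3). For (1)$\Rightarrow$(2), recall that an abelian-by-nilpotent group has an abelian (hence nilpotent) normal subgroup $N$ with $G/N$ nilpotent, so $N\leqslant \F(G)$ and $G/\F(G)$ is a quotient of the nilpotent group $G/N$, whence $\F_2(G)=G$ and $G$ is metanilpotent. For (3)$\Rightarrow$(1) it suffices to observe that each of the groups in \Cref{fig:Examples} realizing the graphs (a)--(g) and (j)--(r) is abelian-by-nilpotent and \cut: they are direct products of abelian groups with Frobenius groups having abelian kernels, and in each case the product of the abelian Fitting pieces is an abelian normal subgroup with nilpotent quotient (all of this is already recorded in \Cref{PGroups}).

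For (2)$\Rightarrow$(3) let $G$ be metanilpotent and \cut, and set $H:=G/\F(G)$. Then $H$ is nilpotent and \cut, so by \Cref{Nilpotent} we have $\pi(H)\subseteq\{2,3\}$; in particular $5,7\in\pi(G)$ forces $5,7\in\pi(\F(G))$. Since $\F(G)$ is nilpotent its Sylow subgroups commute, so whenever two primes of $\{3,5,7\}$ both divide $|\F(G)|$ the corresponding edge lies in $\GK(G)$. By \Cref{2357} and \Cref{PrimeSpectrum} we may invoke Theorems \ref{TLess4} and \ref{T4}: among the realizable \cut graphs it remains exactly to rule out the graphs (h) and (i) (spectrum $\{2,3,5\}$) and the graph (u) (spectrum $\{2,3,5,7\}$), all other admissible graphs being (a)--(g), (j)--(r). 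In the four-vertex case $5,7\in\pi(\F(G))$ yields the edge $5-7$, and then \Cref{6and35} forces the edges $2-7$, $3-5$ and $2-3$; thus $\GK(G)$ contains the graph (p). This already excludes (s), (t) and (v), which lack the edge $5-7$, and reduces the four-vertex analysis to excluding (u).

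The remaining exclusions are the heart of the matter. The plan is to pass to a counterexample of minimal order and, via \Cref{MANS}, to study the faithful action of $H$ on the relevant Fitting layer. For (h): here $\pi(G)=\{2,3,5\}$ and $3-5\notin\GK(G)$, so $3\nmid|\F(G)|$ (else $P_3$ and $P_5$ commute inside $\F(G)$), and the absence of $2-5$ forces every involution of $G$ to act as $-\mathrm{id}$ on $P_5$; together with \Cref{CyclicSylow} and \Cref{ElementaryDP} this pins down $G_2\cong Q_8$ and, through the rationality of the order-$5$ elements and a covering count in $\operatorname{GL}(P_5)$ as in the proof of \Cref{4Vertices3Edges}, forces $P_5\cong C_5\times C_5$ with $Q_8$ acting irreducibly. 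Schur's lemma then makes the Sylow $3$-subgroup of $H$ act by scalars, hence trivially, on $P_5$, producing an element of order $15$ and contradicting $3-5\notin\GK(G)$. The same circle of ideas handles (i): now the Sylow $3$-subgroup of $H$ must act fixed-point-freely on $P_5$, turning $P_5$ into a vector space over $\FF_{25}$, and the covering count needed for rationality of the order-$5$ elements forces the Sylow $2$-part of $H$ to be abelian, so that $H$ acquires a cyclic section of order $12$ and fails to be \cut by \Cref{ElementaryDP}.

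The genuinely hard case is (u). Here $5,7\in\pi(\F(G))$ and, exactly as above, the absence of $2-5$ forces $\F(G)_2=1$, $G_2\cong Q_8$ acting irreducibly on $P_5\cong C_5\times C_5$, and the Sylow $3$-subgroup $T_3$ of $H$ acting trivially on $P_5$. Since (u) also contains the edges $3-5$ and $3-7$, an order-$3$ element commuting with $P_5$ and with a suitable element of $P_7$ yields an element $g$ of order $105$. The plan is to contradict the inverse semi-rationality of $g$: one has $B_G(g)\leqslant\Aut(\GEN{g})\cong C_2\times C_4\times C_6$, and the realizable automorphisms split along the Sylow decomposition of $\GEN{g}$, the $2$-part being produced solely by $Q_8$ (whose unique involution acts as $-\mathrm{id}$ on $P_5$) and the $3$-part solely by $T_3$. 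I expect the main obstacle to be showing that these constraints force $|B_G(g)|<\varphi(105)/2$, so that $g$ is not inverse semi-rational; morally this is the same mechanism that prevents $(C_7\rtimes_\Frob C_3)\times(C_7\rtimes_\Frob C_3)$ from being \cut, where the order-$21$ elements across the two factors lose the automorphisms required by \Cref{ElementaryDP}. Establishing this incompatibility cleanly — rather than through an unwieldy case analysis of the joint $Q_8\times T_3$-action on $P_3\times P_7$ — is the step I would invest the most effort in.
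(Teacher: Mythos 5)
Your skeleton coincides with the paper's: the cycle (1)$\Rightarrow$(2)$\Rightarrow$(3)$\Rightarrow$(1), realizing (a)--(g) and (j)--(r) by the groups of \Cref{PGroups}, the observation that $5,7\nmid [G:\F(G)]$ so that $5,7\in\pi(G)$ forces the edge $5-7$ (which kills (s), (t), (v)), and the reduction of (2)$\Rightarrow$(3) to excluding (h), (i) and (u). Your treatment of (h) is a correct alternative route: the paper simply notes that (h) is disconnected, so $G$ would be Frobenius or $2$-Frobenius by \Cref{ConnectedComponents}, which is incompatible with \Cref{FrobeniusCut} and with \Cref{2Frobeniuscut}; your longer argument (minimal counterexample with $\F(G)=G_5$ elementary abelian, $G_2\simeq Q_8$ via \Cref{CyclicSylow} and \Cref{ElementaryDP}(7), the multiplicity-one covering count of \Cref{4Vertices3Edges}, then Schur's lemma forcing $C_3$ to act by $\FF_5$-scalars, hence trivially) is sound, just not the cheapest.

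The two remaining cases contain genuine gaps. For (i), the assertion that the covering count ``forces the Sylow $2$-part of $H$ to be abelian'' is exactly the crux, and you give no argument for it: the Schur mechanism of case (h) is unavailable because $H_3$ now acts fixed-point-freely rather than trivially, and the $s=1$ count of \Cref{4Vertices3Edges} says nothing about commutativity of $H_2$. This is where the paper works hardest: it writes $G_5=M\otimes N$ with $N$ the $2$-dimensional $\FF_5H_3$-module, uses the \cut property (no central element of order $12$ in $H$) to show that no $g\in H_2$ acts as the scalar $2$ on $M$, and then exhibits an explicit element $a=x_1\otimes y_1+x_2\otimes y_2$ of order $5$ that cannot be rational. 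For (u), the claims you import ``exactly as above'' do not transfer. First, $3$ may divide $|\F(G)|$ (the edges $3-5$ and $3-7$ are present in (u)), so $H_3\neq 1$ is not automatic; the paper obtains elements of orders $4$ and $3$ in $H$ from the inverse semi-rationality of an element of order $35$ in $\F(G)$. Second, and fatally for your plan, there is no reduction making $G_5$ elementary abelian here: a proper quotient of a minimal counterexample may realize graph (p), which \emph{is} realizable by metanilpotent \cut groups, so a \Cref{MANS}-type minimality argument yields nothing; without $G_5\simeq C_5\times C_5$ your Schur step, hence the triviality of $T_3$ on $P_5$, hence the element of order $105$, is unsupported. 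Finally, you misplace the difficulty: the step you single out as the main obstacle---that an element $g$ of order $105$ cannot be inverse semi-rational---is immediate, since $B_G(g)$ is abelian and its Sylow $2$-subgroup is an abelian section of $G_2\in\{C_2,C_4,Q_8\}$ (by \Cref{CyclicSylow}), hence of order at most $4$, giving $|B_G(g)|\leqslant 12<24=\varphi(105)/2$. This is precisely how the paper proves that \emph{no} element of order $105$ exists, after which it finishes case (u) with no module theory at all: it shows $\F(G)=G_5\times G_7$, that $G_5G_2$ is a rational Frobenius group, and that $K=\GEN{G_5G_2,g}$, for $g$ of order $3$ commuting with a $7$-element, is a metanilpotent \cut group whose GK-graph is (h), thereby reducing (u) to the already excluded case (h). Your proposal has no substitute for either of these two arguments, so as it stands (2)$\Rightarrow$(3) remains unproven for the graphs (i) and (u).
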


\begin{proof}
Clearly, (1) implies (2). Also, observe that the only groups in \Cref{fig:Examples} which are not abelian-by-nilpotent are  precisely those realizing the graphs (h) and (i). This proves (3) implies (1). So, we prove (2) implies (3).
In view of Theorems~\ref{TLess4} and \ref{T4}, it suffices to prove that none of the graphs (h)-(i)  and (s)-(v) are realizable as the GK-graph of a metanilpotent group. 

Let $G$ be a metanilpotent \cut group and let $\Gamma=\GK(G)$. 
We first observe that $\Gamma$ is not the graph (h).
This follows at once because, as (h) is not connected, by \Cref{ConnectedComponents}, $G$ must be either Frobenius or 2-Frobenius. The former is not compatible with \Cref{FrobeniusCut} and the latter is in contradiction with \Cref{2Frobeniuscut}. 

Since $G/\F(G)$ is nilpotent and \cut, neither $5$ nor $7$ divides $[G:\F(G)]$.
Thus $G_5$ and $G_7$ are normal in $G$. 
Hence, if $|G|$ is divisible by $5$ and $7$ then $G$ has an element of order $5\cdot 7$. 
This implies that $\Gamma$ is none of (s), (t) or (v). 
It remains to prove that $\Gamma$ is neither (i) nor (u). 

Suppose that $\Gamma$ is the graph (u). 
Then $\F(G)$ has an element of order $5\cdot 7$ and, as this element is inverse semi-rational, $G/\F(G)$ has elements of order $4$ and $3$.
Since $G/\F(G)$ is nilpotent and \cut necessarily the Sylow 2-subgroup of $G$ is non-abelian.
As $G_5$ is normal in $G$ and $G$ does not have elements of order $2\cdot 5$, by \Cref{CyclicSylow}, the Sylow 2-subgroup of $G$ is a quaternion group of order $8$. 
In particular, $G$ does not have an abelian section of order $8$. 
As $B_G(g)$ is abelian for every $g\in G$, it follows that $|B_G(g)_2|$ divides $4$. 
Since $g$ is inverse semi-rational then $|g|\ne 3\cdot 5 \cdot 7$.
Thus $G$ does not have elements of order $3\cdot 5\cdot 7$ while it has elements of order $5\cdot 7$ and elements of order $3\cdot 7$.
This implies that $\F(G)=G_5\times G_7$ and $G/F(G)\simeq G_2\times G_3$. 
Let $H$ be a Sylow $2$-subgroup of $G$. Then $G_5H$ is a Frobenius group with Frobenius kernel $G_5$. 
%Moreover, by \Cref{ElementaryDP}(7) and the fact that $G_3\ne 1$, $H$ is rational because $G/\F(G)\simeq H\times G_3$ is \cut.  
The group $G/F(G)$ is \cut and isomorphic to $H\times C_3$ with $H$ a $2$-group.  Suppose that $H$ is not rational. Then there is $h\in H$ and and odd integer $r$ such that $h^r$ is not conjugate to $h$ in $H$. 
	Let $g$ be an element of order $3$ in $G/F(G)$. 
	Let $s$ be an integer such that $s\equiv r\mod |h|$ and $s\equiv 1 \mod 3$.
	Then $gh=hg$, $g^s=g$ is not conjugate to $g^{-1}$ in $G/F(G)$ and $h^s=h^r$ is not conjugate to $h$ in $G/F(G)$. Therefore $(gh)^s$ is conjugate to neither $gh$ nor  $(gh)^{-1}$ in $G/F(G)$, in contradiction with the fact that $G/F(G)$ is \cut.  So $H$ is rational.
Since $\F(G)H$ contains a Sylow $2$-subgroup of $G$ and is normal in $G$, it contains all Sylow $2$-subgroups, and hence every $5$-element of $G$ is rational in $G_5H$. Thus $G_5H$ is a Frobenius rational group. 
Let $g$ and $h$ be commuting elements of $G$ of order $3$ and $7$ respectively, so that $g$ does not commute with any $5$-element, as $G$ has no elements of order $3\cdot 5\cdot 7$.
Then $K=\GEN{G_5H,g}$ is a \cut metanilpotent group with $\GK(K)$ equal to the graph (h). This contradicts the previously proven fact that (h) is not the GK-graph of a cut metanilpotent group.

Finally, suppose that $\Gamma$ is the graph (i). 
Then the Sylow subgroup of $G$ is cyclic of order $3$, by \Cref{CyclicSylow}, and $G/\F(G)$ has an element of order $12$. Thus, as in the previous paragraph, the Sylow 2-subgroup of $G$ is non-abelian.
We may assume that $G$ has minimal order with $\Gamma(G)=\Gamma$. 
Let $A$ be a minimal normal subgroup of $G$ contained in $G_5$.
If $A\ne G_5$ then $\pi(G/A)=\{2,3,5\}$, and the minimality of $\Gamma$ implies that $\GK(G/A)$ has less edges than $\Gamma$. By \Cref{TLess4}, necessarily $\GK(G/A)$ is the graph (h), a contradiction.
Thus $A=G_5$ and $G_5$ is elementary abelian. 
Using again the minimality of the action of $G$ it follows that the action of $G_{5'}$ on $G_5$ is faithful and irreducible.
In particular, $\F(G)=G_5$ and therefore $G=G_5\rtimes H$ with $H=H_2\times H_3$ for $H_2$ a non-abelian $2$-group and $H_3$ a cyclic group of order $3$. 
We consider $G_5$ as an irreducible right $\FF_5 H$-module via the multiplication $x\cdot h=x^h$ for $x\in G_5$ and $h\in H$.
This module is the tensor product of an irreducible $\FF_5 H_2$-module $M$ and an irreducible $\FF_5 H_3$-module $N$, i.e. if $m\in M$, $n\in N$ and $h\in H$ then $(m\otimes n)h=mh_2\otimes nh_3$. 
As $G$ does not have elements of order $3\cdot 5$ and $H_3\simeq C_3$, we have that $N$ has degree $2$.
As every element of $G_5$ is rational in $G$, for every $x\in G_5$ there is $g\in H_2$ such that $x\cdot g=2x$.
We claim that there is no $g\in H$ such that $m\cdot g=2m$ for every $m\in M$.
For if $g\in H$ satisfies $m\cdot g=2m$ for every $m\in M$ then $x\cdot (gh)=(2x)\cdot h=x\cdot (hg)$ for every $h\in H_2$ and every $x\in G_5$. 
Then $[g,h]=1$ because the action of $H$ on $G_5$ is faithful. 
Thus $g$ is a  central element of order multiple of $4$ in $H_2$ and hence $G/\F(G)\simeq H$ has a central element of order $12$, which is not possible. 
This proves the claim. 
The claim implies that $M$ has two linearly independent elements $x_1$ and $x_2$ such that there is no $g\in H_2$ such that $x_i\cdot g=2x_i$ for $i=1,2$.
Let $y_1,y_2\in N$ be linearly independent and take $a=x_1\otimes y_1+x_2\otimes y_2$. 
As $a$ is inverse semi-rational there is $g\in H_2$ such that $a\cdot g=2a$. 
Then $x_i\cdot g=2x_i$ for $i=1,2$, a contradiction.
\end{proof}

 \section{The Prime Graph Question for \cut groups}\label{SectionPQ}

In this section, we prove \Cref{PrimeGraphCut}, which almost answers the Prime Graph Question \PQ for \cut groups and implies a positive answer for rational groups. We first recall the reduction result of Kimmerle and A.~Konovalov to almost simple groups.
Recall that a group $A$ is \emph{almost simple}, if there exists a non-abelian simple group $S$ such that $\Inn(S) \leqslant A \leqslant \Aut(S)$. Since $S \simeq \Inn(S)$ this means that $A$ is sandwiched between a non-abelian-simple group and its automorphism group; the group $S$ is called the \emph{socle of $A$}. Note that each non-abelian simple group $S$ defines its own family of almost simple groups, parametrized by the conjugacy classes of subgroups of $\Out(S) = \Aut(S)/\Inn(S)$.

\begin{theorem}[{\cite[Theorem 1.2]{KimmerleKonovalov2016}}]\label{reduction}
	Let $G$ be a finite group. The Prime Graph Question \PQ has a positive answer for $G$ if it has a positive answer for all almost simple images of $G$.
\end{theorem}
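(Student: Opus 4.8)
The plan is to argue by induction on $|G|$, after isolating the genuine content of \PQ. By the quoted result of Higman, $G$ and $\V(\ZZ G)$ have the same set of vertices, and since $G \leqslant \V(\ZZ G)$ every edge of $\GK(G)$ is an edge of $\GK(\V(\ZZ G))$. An edge $p-q$ of $\GK(\V(\ZZ G))$ corresponds exactly to a torsion unit of order $pq$ (any torsion unit of order divisible by two distinct primes $p,q$ has a power of order $pq$, while prime-power orders contribute only vertices). Thus \PQ for $G$ is equivalent to the single assertion: for each pair of distinct primes $p\ne q$, a torsion unit of order $pq$ in $\V(\ZZ G)$ forces an element of order $pq$ in $G$.

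Next I would set up a minimal counterexample. Suppose the theorem fails and choose $G$ of minimal order for which \PQ fails although \PQ holds for all almost simple images of $G$. Every almost simple image of a quotient $G/N$ is an almost simple image of $G$, so the hypothesis is inherited by all proper quotients; by minimality \PQ then holds for every $G/N$ with $1 \ne N \trianglelefteq G$. Since solvable groups satisfy \PQ (Kimmerle) and have no almost simple images, $G$ is non-solvable. Fix $p\ne q$ and a torsion unit $u\in\V(\ZZ G)$ of order $pq$ witnessing the failure, so that $G$ has no element of order $pq$.

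The engine is the behaviour of $u$ under the projection $\ZZ G \to \ZZ(G/N)$ for a minimal normal subgroup $N$. Two facts drive everything: the image $\bar u$ is a torsion unit of order $m$ dividing $pq$, and every prime dividing $pq/m$ divides $|N|$ (a torsion unit mapping to $1$ modulo $N$ has order supported on $\pi(N)$); then \PQ for $G/N$ converts $\bar u$ into an element of order $m$ in $G/N$. I would first perform a coprime-core reduction to assume $\O_{\{p,q\}'}(G)=1$: if $K:=\O_{\{p,q\}'}(G)\ne 1$, then the image of $u$ in $\V(\ZZ(G/K))$ still has order $pq$ (no prime is lost, as $\gcd(pq,|K|)=1$), while $G/K$ still has no element of order $pq$ (a $\{p,q\}$-element would lift back to $G$ by coprimeness), so $G/K$ is a smaller counterexample, contradicting minimality. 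Hence $\O_{\{p,q\}'}(G)=1$, and in particular every abelian minimal normal subgroup of $G$ is a $p$- or $q$-group.

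Two cases remain, carrying the real work. If some minimal normal subgroup $N$ is \emph{non-abelian}, then $N\cong S^k$ for a non-abelian simple $S$ and $G/\C_G(N)$ embeds in $\Aut(S)\wr \mathrm{Sym}(k)$; here I would arrange, by passing to further proper quotients (all of which satisfy \PQ) and down to the layer acting on a single factor, to produce an almost simple image $A$ of $G$ carrying an induced unit of order $pq$ in $\V(\ZZ A)$, contradicting \PQ for the almost simple image $A$ (when $k=1$ this is immediate, as $G$ is then itself almost simple). If instead every minimal normal subgroup is \emph{abelian}, pick an elementary abelian minimal normal $r$-subgroup $N$ with $r\in\{p,q\}$, say $r=p$; using \PQ for $G/N$ I obtain an element of order $pq$ in $G/N$, lift its $q$-part $y$ to $G$ by coprimeness, and must exhibit a fixed point of $y$ on $N$, i.e.\ $\C_N(y)\ne 1$, which would yield an element of order $pq$ in $G$. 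The main obstacle is precisely this last step (together with the $k>1$ bookkeeping in the non-abelian case): converting the abstract unit $u$ of order $pq$ into genuine commuting/fixed-point data inside $G$. I expect to resolve it through the integral representation theory of $u$ — its partial augmentations and the Luthar--Passi constraints on the $p$- and $q$-singular classes (the HeLP method) — combined with the already-settled solvable and Frobenius cases of \PQ, which govern the action of $y$ on the coprime section $N$. This analytic core, rather than the group-theoretic scaffolding, is where the difficulty concentrates.
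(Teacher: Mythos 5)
First, note that the paper does not prove this statement at all: it is quoted verbatim from \cite[Theorem 1.2]{KimmerleKonovalov2016}, so there is no in-paper argument to compare yours against. Judged on its own, your proposal is a plausible scaffolding with a genuine gap at its core --- a gap you concede yourself in the final paragraph, but which is worse than you suggest, because the abelian case of your sketch is mis-aimed. The crux of the reduction is controlling the order of the image of a torsion unit under $\ZZ G \to \ZZ(G/N)$ when $\pi(N)$ meets $\{p,q\}$. If $N$ is a $p$-group, the image $\bar u$ of your unit $u$ of order $pq$ may have order $q$ rather than $pq$ (only the kernel-torsion being $p$-torsion is guaranteed, by Hertweck's results on normal $p$-subgroups), and in that case \PQ for $G/N$ tells you nothing --- this is precisely the case your sketch never addresses. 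Conversely, in the case your sketch does treat, where $\bar u$ retains order $pq$, no fixed-point argument $\C_N(y)\ne 1$ is needed at all: \PQ for $G/N$ gives an element $xN$ of order $pq$ in $G/N$, and since $N$ is a $p$-group any preimage $x$ has order $p^c q$ with $c\geqslant 1$, so the cyclic group $\GEN{x}$ already contains an element of order $pq$. So the step you identify as the ``main obstacle'' is trivial where your hypotheses hold, and the actual obstacle (order-drop modulo a $p$- or $q$-kernel) is untouched; invoking ``Luthar--Passi/HeLP'' cannot close it, since HeLP is a computational constraint method for specific groups and character tables, not a structural tool that yields a general reduction theorem.

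The non-abelian case has a parallel problem. Your claim that for $k=1$ the group ``is then itself almost simple'' requires $\C_G(N)=1$, which your minimal-counterexample setup does not provide (and passing to $G/\C_G(N)$ reintroduces the order-drop issue); for $k>1$ you give no mechanism producing an almost simple quotient of $G$ in which the induced unit still has order $pq$, and this is exactly where the substance of Kimmerle--Konovalov's argument lies --- they must combine Hertweck-type results on torsion units over normal subgroups (the same results underlying the positive answer to the Spectrum Problem for solvable groups, cited here as \cite{Her}) with an analysis of the socle to funnel the unit into an almost simple image without losing its order. In short: your group-theoretic frame (minimal counterexample, inheritance of the hypothesis by quotients, coprime-core reduction with $\O_{\{p,q\}'}(G)=1$) is reasonable, but the two steps that constitute the theorem's actual content --- order preservation under the critical quotients, and the descent to an almost simple image --- are respectively misdiagnosed and asserted without proof.
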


Using the  reduction result in \Cref{reduction}, \PQ was answered affirmatively for several classes of groups including many non-solvable groups, e.g.\ for all groups $G$ with $|\pi(G)| \leqslant 3$ \cite{KKStAndrews,BM17}.
Recently, Trefethen  \cite{Tre19} gave a list of all non-abelian composition factors of \cut\ groups (note that it is still an open problem to determine the abelian composition factors of \cut\ groups). This list together with the so-called HeLP method enables us to prove \Cref{PrimeGraphCut}.\\

\noindent\textbf{\textit{Proof of \Cref{PrimeGraphCut}.}}\ Since images of \cut\ groups are \cut\ by \Cref{ElementaryDP}, it suffices to give a positive answer to \PQ for all almost simple \cut groups that do not map onto the monster $M$ by the above mentioned reduction \Cref{reduction}. The socle of every almost simple \cut\ group is in the list of non-abelian composition factors of \cut\ groups in \cite[Theorem 1.1]{Tre19}. 
\PQ has a positive answer for all almost simple groups with an alternating socle \cite{BMAn}.
Furthermore, in \cite[Corollary~1.2]{CM21} an affirmative answer to (PQ) is given for all almost simple groups with sporadic socle other than the O'Nan simple group $O'N$ or the simple monster group $M$. However the two almost simple groups $G$ 
 with socle $O'N$ are not \cut because both have an element $g$ of order $19$ and no element of order $9$, see e.g.\ the ATLAS \cite{ATLAS} or its online version \cite{OnlineATLAS} (\url{http://brauer.maths.qmul.ac.uk/Atlas/v3/spor/ON/}) 
and hence $g$ is not inverse semirational in $G$, by \Cref{PrimePower}.

It remains to consider almost simple groups with a \cut simple composition factor of Lie type and for that we go through the list  of the $16$ composition factors of Lie type in \cite{Tre19}. 
As a matter of fact (PQ) has been proven for all of them.
 In \Cref{Table_PQcut},  
each column corresponds to such a composition factor. The first row contains the name of the simple group $S$ and the second row gives the structure of the outer automorphism group $\Out(S)$ of $S$ listed from \url{http://brauer.maths.qmul.ac.uk/Atlas/v3}. The ATLAS notation is used throughout. For example, $S.2_3$ denotes the extension of the non-abelian simple group $S$ which corresponds to the $3$rd conjugacy class of subgroups of $\Out(S)$ of order $2$.
The last row contains  references where the Prime Graph Question was answered affirmatively for all groups in the respective column.
\hfill $\Box$

%From the result in \Cref{Table_PQcut}, we deduce that to prove the result it remains to verify \PQ for $U_6(2)$, $U_6(2).2$ and $G_2(4).2$.
%Let $G$ be one of these three groups. By \cite[Theorem~12]{Hig40}, the GK-graphs of $G$ and $\V(\ZZ G)$ have the same set of vertices. Of course every edge of the GK-graph of $G$ is an edge of the GK-graph of $\V(\ZZ G)$. Therefore we need to check that for any two distinct primes $p$ and $q$ dividing the order of $G$, if $G$ does not have an element of order $pq$, then neither does $\V(\ZZ G)$. By the main results in \cite{LutharPassi1989} and \cite{HertweckBrauer}, the partial augmentations of an element of order $n$ in $\V(\ZZ G)$ satisfy a list of Diophantine conditions depending on $n$ (see \cite{HeLPArticle} or \cite{MargolisDelRioSurvey} for details). 
%The function \textsf{HeLP\_PQ} of the GAP package \textsf{HeLP} \cite{HeLPPackage} is an implementation of this method. More precisely, if the output of \texttt{HeLP\_PQ(C)}
%is true, with \textsf{C} either the group $G$ or its character table, then there is no list of integers which satisfy the mentioned Diophantine conditions depending on primes $p$ and $q$ as above. In that case \PQ has a positive solution for the group $G$. The reader can verify that \PQ holds for $U_6(2)$, $U_6(2).2$ and $G_2(4).2$ by using the HeLP package and calling \texttt{HeLP\_PQ(CharacterTable("U6(2)"))},  \texttt{HeLP\_PQ(CharacterTable("U6(2).2"))} and \texttt{HeLP\_PQ(CharacterTable("G2(4).2"))}.	

As the monster group is not rational \cite[pp.~220-234]{ATLAS}, \Cref{PrimeGraphRational} is a consequence of \Cref{PrimeGraphCut}.

In our final remark we list all the almost simple \cut (respectively, rational) groups.

\begin{remark}\label{ASCut}
The third and fourth rows of \Cref{Table_PQcut} contain all the almost simple groups with socle $S$, arranged according to whether they  are \cut or not. To check the latter, probably the most convenient way is to use the known character tables of these groups, see e.g.\ \cite{GAP4}, and the function \verb+IsCutGroup+ included in Section~\ref{SubsectionRationalCut}.
For example, calling \texttt{IsCutGroup(CharacterTable("L2(7)"))} one verifies that $L_2(7)$ is \cut, while that $L_2(7).2$ is not \cut can be checked by calling \texttt{IsCutGroup(CharacterTable("L2(7).2"))}.	

\Cref{Table_PQCut2} displays the remaining almost simple \cut groups arranged by the type of socle.
We sketch how to verify this. It is well-known that symmetric groups are rational and by \cite[Theorem 4.6]{Fer04} an alternating group $A_n$ with $n\geqslant 5$ is \cut if and only if $n\in \{7, 8, 9, 12\}$. 
As $|\Out(A_n)|=2$ for $n\ne 6$ and $\Out(A_6)\simeq C_2\times C_2$, to decide which almost simple groups with alternating socle are \cut we are left with the exceptional almost simple groups $A_6.2_2 = \PGL(2,9)$, $A_6.2_3 = M_{10}$ and $\Aut(A_6)=A_6.2^2$. 
Now calling $\texttt{IsCutGroup(CharacterTable("A6.2\_2"))}$ in \textsf{GAP} one verifies that $A_6.2_2$ is not \cut. Similarly one verifies that both $A_6.2_3$ and $A_6.2^2$ are \cut. Using the list of possible sporadic socles from \cite[Theorem 1.1]{Tre19}, one can proceed similarly for the remaining cases.

Likewise, using \cite[Theorem B]{FS89}, one obtains that the almost simple rational groups are: \[S_n  ( n\geqslant 5 ),\  A_6.2^2 ,\  S_4(3).2 ,\  S_6(2) ,\  O_8^+(2) ,\  O_8^+(2).2 ,\  O_8^+(2).S_3 ,\  L_3(4).2_1 ,\  L_3(4).2^2 ,\  U_4(3).2_1^2 ,\  U_4(3).2_2^2.\]
\end{remark}

{\footnotesize
	\begin{table}[ht!]
		\begin{center}
			\begin{tabular}{c|cccccc}\hline\hline
				$S$ & $L_2(7)$ & $L_2(8)$ & $L_3(4)$ & $U_3(3)$ & $U_3(4)$ & $U_3(5)$ \\ \hline
				$\Out(S)$ & $2$ & $3$ & $2 \times S_3$ & $2$ & $4$ & $S_3$  \\ \hline
				\cut & $S$ & $S.3$ & \begin{minipage}{2.7cm} $S.2_1$, $S.2_2$, $S.6$, $S.2^2$,  $S.3.2_2$, $S.D_{12}$
				\end{minipage} & $S$, $S.2$ & $S.4$ & $S$, $S.2$  \\ \hline
				not \cut & $S.2$ & $S$ & $S$, $S.2_3$, $S.3$, $S.3.2_3$ & - & $S$, $S.2$ & $S.3$, $S.S_3$ 
				\\  \hline
				& \cite{KKStAndrews} & \cite{KKStAndrews} & \cite{BM42} & \cite{KKStAndrews} & \cite{BM42} & \cite{BM42}  \\ 
				\hline\hline
%			\end{tabular} 
%		\end{center}	
%\end{table}}
%{\footnotesize
%	\begin{table}[ht!]
%		\begin{center}
%			\begin{tabular}{c|ccccccc}\hline\hline
				$S$ & $U_3(8)$ & $U_4(3)$ & $U_5(2)$ & $U_6(2)$ & $S_4(3) = U_4(2)$   \\ \hline
				$\Out(S)$ & $3 \times S_3$& $D_8$ & $2$ & $S_3$ & $2$  \\ \hline
				\cut & $S.3_1$& \begin{minipage}{2.0cm} $S$, $S.2_1$, $S.2_2$, $S.2_3$, $S.2^2_1$, $S.2^2_2$ \end{minipage} & $S$, $S.2$ & $S$, $S.2$ & $S$, $S.2$   \\ \hline
				not \cut & \begin{minipage}{2.6cm} $S$, $S.2$, $S.3_2$, $S.3_3$, $S.6$, $S.S_3$, $S.3^2$, $S.(S3\times 3)$ \end{minipage}& $S.4$, $S.D_8$ & - & $S.3$, $S.S_3$ & -
				\\  \hline
				& \cite{BM42}& \cite{BM42} & \cite{BM42} & \cite{CM21} & \cite{KKStAndrews}  \\ 
				\hline\hline
%			\end{tabular} 
%		\end{center}	
%\end{table}}
%{\footnotesize
%	\begin{table}[ht!]
%		\begin{center}
%			\begin{tabular}{c|ccccc}\hline\hline
				$S$ & $S_6(2)$& $G_2(4)$ & ${}^2F_4(2)'$ & ${}^3D_4(2)$ &$O_8^+(2)$\\ \hline
				$\Out(S)$ & $1$ & $2$ & $2$ & $3$ &$S_3$\\ \hline
				\cut & $S$& $S.2$ & $S.2$ & $S.3$ &$S$, $S.2$, $S.3$, $S.S_3$\\ \hline
				not \cut & - & $S$ & $S$ & $S$ &-\\  \hline
				& \cite{BM42} &  \cite{CM21} & \cite{BM42} & \cite{BM42}&  \cite{BM42}\\ 
				\hline\hline
			\end{tabular} \\[0.5cm]
		\end{center}
		\caption{\label{Table_PQcut}}
	\end{table}
}

\begin{table}[ht!]
	\begin{tabular}{c|c}
		\hline\hline
		\text{Socle} & \text{Almost simple \cut groups} \\\hline
		\text{Alternating} & $S_n$ \; ($n\geqslant 5$), \quad $A_n$ \; ($n\in \{7, 8, 9, 12\}$), \quad $M_{10} = A_6.2_3$, \quad \text{and} \quad $\Aut(A_6) = A_6.2^2$\\\hline
		\text{Sporadic} & $M_{11}$,\ $M_{12}$,\ $M_{22}$,\ $M_{22}.2$,\ $M_{23}$,\ $M_{24}$,\ $Co_1$,\ $Co_2$,\ $Co_3$,\ $HS$,\ $McL$,\ $McL.2$,\ $Th$,\ $M$.\\\hline\hline
	\end{tabular}
	\caption{\label{Table_PQCut2}}
\end{table}

\textbf{Acknowledgment}: We are thankful to the referee for the careful reading and useful suggestions. We thank Anu Rani Jindal for pointing out a mistake in the previous version.

\bibliographystyle{amsalpha}
\bibliography{cut}

\end{document}